\newtheorem{theorem}{Theorem}[section]
\newtheorem{assumption}{Assumption}[section]
\crefname{assumption}{assumption}{hypotheses}
\Crefname{assumption}{Assumption}{Hypotheses}
\newtheorem{lemma}{Lemma}[section]
\newtheorem{proposition}{Proposition}[section]
\crefname{proposition}{proposition}{propositions}
\Crefname{proposition}{Proposition}{Propositions}
\newdefinition{remark}{Remark}[section]
\crefname{remark}{remark}{remarks}
\Crefname{remark}{Remark}{Remarks}
\newproof{proof}{Proof}
\journal{}
\begin{document}

\begin{frontmatter}



\title{Optimal-rate error estimates and a twice decoupled solver for a backward Euler finite element scheme  of the Doyle-Fuller-Newman model of lithium-ion cells \tnoteref{t1} } 
 
\tnotetext[t1]{This work was funded by the National Natural Science Foundation of China (grant 12371437) and the Beijing Natural Science Foundation (grant Z240001).}

\author[label0,label1]{Shu Xu\corref{cor1}}
\ead{xushu@lsec.cc.ac.cn}
\author[label1,label2,label3]{Liqun Cao}
\ead{clq@lsec.cc.ac.cn}

\cortext[cor1]{Corresponding author}

\affiliation[label0]{organization={School of Mathematical Sciences, Peking University},
            city={Beijing},
            postcode={100871},
            country={China}}

\affiliation[label1]{organization={Institute of Computational Mathematics and Scientific/Engineering Computing, Academy of Mathematics and Systems Science, Chinese Academy of
Sciences},
            city={Beijing},
            postcode={100190},
            country={China}}

\affiliation[label2]{organization={State Key Laboratory of Scientific and Engineering Computing},
city={Beijing},
postcode={100190},
country={China}}

\affiliation[label3]{organization={National Center for Mathematics and Interdisciplinary Sciences, Chinese Academy of
Sciences},
city={Beijing},
postcode={100190},
country={China}}

\begin{abstract}
We investigate the convergence of a backward Euler finite element discretization applied to a multi-domain and multi-scale elliptic-parabolic system, derived from the Doyle-Fuller-Newman model for lithium-ion cells. We establish optimal-order error estimates for the solution in the norms $l^2(H^1)$ and $l^2(L^2(H^q_r))$, $q=0,1$. To improve computational efficiency, we propose a novel solver that  accelerates the solution process and controls memory usage. Numerical experiments with realistic battery parameters validate the theoretical error rates and  demonstrate the significantly superior performance of the proposed solver over existing solvers.

\end{abstract}

\begin{graphicalabstract}
\end{graphicalabstract}

\begin{highlights}
\item Comprehensive numerical analysis and implementation of the most widely used multiscale and multiphysics model for lithium-ion cells.
\item Optimal-order error estimates in the norms $l^2\qty(H^1)$ and $l^2\qty(L^2\qty(H^q_r)),\,q=0,1$, are rigorously established.
\item Numerical convergence rates in both 2D and 3D are validated, which is unaccessible in the literature. 
\item A novel solver is designed to accelerate computation and reduce memory overhead, effectively handling strong nonlinearity and multiscale complexity. 
\item Three-dimensional numerical experiments with real battery parameters demonstrate that the proposed solver is the fastest and flexible to balance performance.
\end{highlights}

\begin{keyword}
Elliptic-parabolic system\sep Lithium-ion cells \sep DFN model \sep Finite element \sep Error analysis \sep Multiscale and multiphysics model
\MSC[2020] 65M15 \sep 65M60 \sep 65N15 \sep 65N30 \sep 78A57
\end{keyword}

\end{frontmatter}



\section{Introduction}\label{sec:intro}

    The Doyle-Fuller-Newman (DFN) model \cite{doyle_DFN_modeling_1993,fuller_simulation_1994}, commonly referred to as the pseudo-two-dimensional (P2D) model when the cell region is simplified to one dimension, is the most widely used physics-based model for lithium-ion cells.  It is essential in various engineering applications, including estimating the state of charge (SOC), analyzing capacity performance under diverse operating conditions, and generating impedance spectra \cite{smith_solid-state_2006,ramadesigan_modeling_2012,shi2015multi,plett_BMS1_2015,telmasre_impedance_2022,wu_physics-based_2024}. Furthermore, as a cornerstone of battery modeling, it provides a foundation for incorporating additional physics, such as temperature effects and mechanical stress, enabling various model extensions \cite{golmon_multiscale_2012,hariharan_mathematical_2018,chen_porous_2022,planella_continuum_2022}.  

    In this paper, we examine the following fully coupled nonlinear elliptic-parabolic system that characterizes the DFN model:
    \begin{equation}\label{eq:DFN_dfn_strong}
    \begin{cases}
    -\nabla \cdot\left(\kappa_1 \nabla \phi_1 - \kappa_2 \nabla f\qty(c_1)\right)=a_2 J, & (x, t) \in \Omega_1 \times\left(0, T\right), \\
    -\nabla \cdot\left(\sigma \nabla \phi_2\right)=-a_2 J, & (x, t) \in \Omega_2 \times\left(0, T\right), \\
    \varepsilon_1\frac{\partial c_1}{\partial t}-\nabla \cdot\left(k_1 \nabla c_1\right)=a_1 J, & (x, t) \in \Omega_1 \times\left(0, T\right), \\
    \frac{\partial c_2}{\partial t}-\frac{1}{r^2} \frac{\partial}{\partial r}\left(r^2 k_2 \frac{\partial c_2}{\partial r}\right)=0, & (x, r, t) \in \Omega_{2 r} \times\left(0, T \right),
    \end{cases}
    \end{equation} 
    where $\Omega_2\subset \Omega_1 \subset \mathbb{R}^N$ and $\Omega_{2 r} \subset \mathbb{R}^{N}\times \mathbb{R}_+$, $N=1,2,3$, represent multiply connected domains. 
    It is notable that the last equation does not involve $x$-differential operators, and $\Omega_{2 r}$ spans two distinct scales $x$ and $r$. Specifically, for each point $x \in \Omega_2$, there exists a singular parabolic equation defined over the $r$-coordinate. 

    \begin{figure}[htbp]
        \centering
        \includegraphics[width=0.6\textwidth]{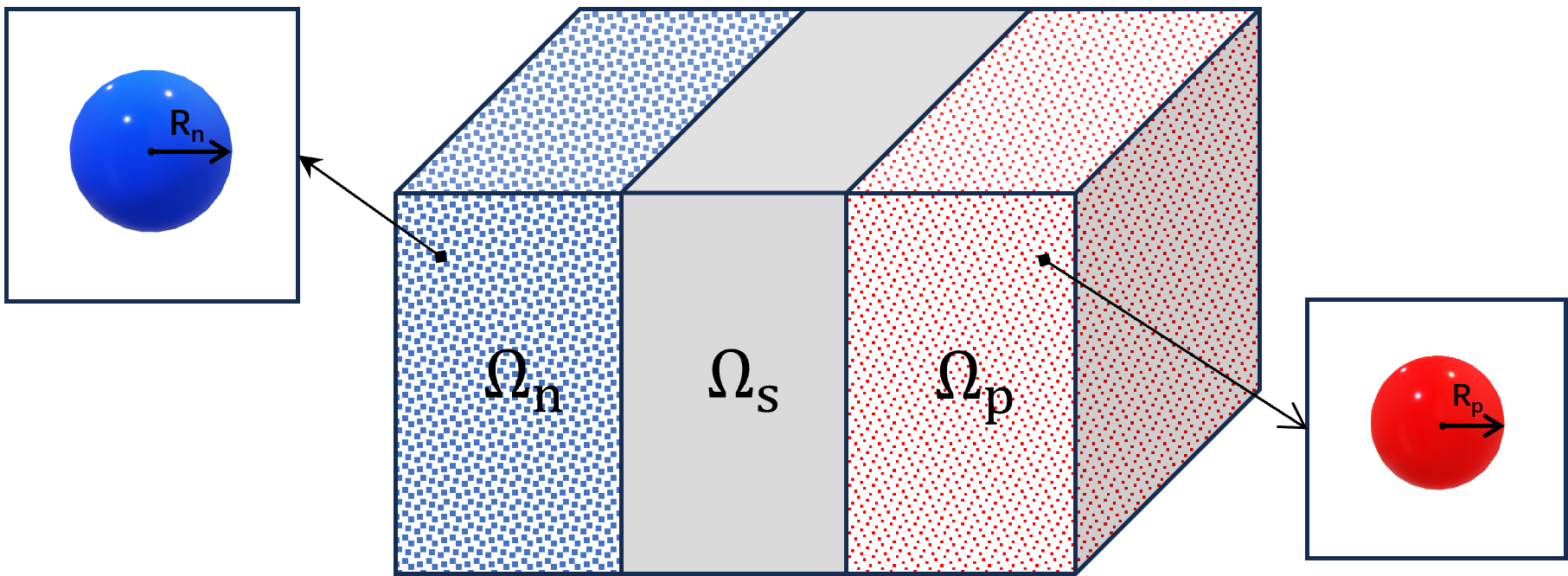} 
        \caption{A 3D schematic representation of a Li-ion cell.}
        \label{fig:domain} 
    \end{figure}

    In the DFN model, a lithium-ion cell occupies a domain $\Omega \subset \mathbb{R}^N$, where $1 \le N \le 3$, comprising three subdomains: the positive electrode $\Omega_\mathrm{p}$, the negative electrode $\Omega_\mathrm{n}$, and the separator  $\Omega_\mathrm{s}$. These regions typically form a laminated box structure, with $\bar \Omega = \bar \Omega_\mathrm{n} \cup \bar \Omega_\mathrm{s} \cup \bar \Omega_\mathrm{p}$, as illustrated in Figure~\ref{fig:domain}.  
    Following a macrohomogeneous approach, it is assumed that the electrode and electrolyte phases coexist within $\Omega_2$, and spherical particles with radii $R_\mathrm{p}$ and $R_\mathrm{n}$ are uniformly distributed throughout the electrodes, parameterized by the coordinates $\qty(x,r)\in \Omega_{2 r} := \qty(\Omega_\mathrm{n} \times (0, R_\mathrm{n})) \cup \qty(\Omega_\mathrm{p} \times (0, R_\mathrm{p}))$. We define the electrolyte region as $\Omega_1 = \Omega_\mathrm{n} \cup \Omega_\mathrm{s} \cup \Omega_\mathrm{p}$ and the electrode region as $\Omega_2 = \Omega_\mathrm{n} \cup \Omega_\mathrm{p}$.
    The model seeks to solve for four key variables: the electrolyte potential $\phi_1(x,t)$, the electrode potential $\phi_2(x,t)$, the lithium-ion concentration in the electrolyte  $c_1(x,t)$, and the lithium concentration within the particles $c_2(x,r,t)$.

    The multi-domain characteristics of the DFN model imply that the coefficients and nonlinear functions exhibit discontinuities across subdomains. Let $\boldsymbol{1}_{\Omega_m}$ denote the indicator function for the subdomain $\Omega_m$. It is assumed that  $\varepsilon_{1}$, $\sigma$, $k_{i}$ and $a_{i}$ ($i=1,2$), are piecewisely positive constants. When $v$ is piecewise constant, we denote its minimum and maximum values by $\underline{v} = \min_x v$ and $\bar{v} = \max_x{v}$, respectively. The coefficients $\kappa_i$, $i=1,2$ are defined as
    \begin{equation}\label{eq:def_kappa}
        \kappa_i = \sum_{m\in \set{\mathrm{n,s,p}}}\kappa_{im}\qty(c_1)\boldsymbol{1}_{\Omega_m},
    \end{equation}
    where  
    $\kappa_{1m},\kappa_{2m}\colon (0,+\infty) \rightarrow (0,+\infty)$. 
    For the spherical particles in the electrode regions, we set the radii as $R_\mathrm{s}  =\sum_{m\in \set{\mathrm{n,p}}}R_{m}\boldsymbol{1}_{\Omega_m}$ and the maximum lithium concentration as $c_{2,\max}  =\sum_{m\in \set{\text{n,p}}}c_{2,\max,m}\boldsymbol{1}_{\Omega_m}$.
    The source term $J$ depends on the lithium concentration at the particle surface, $\bar c_2(x,t) = c_2\qty(x,R_\mathrm{s}(x),t)$, and $\eta:= \phi_2 - \phi_1 - U$, where $U = \sum_{m\in \set{\mathrm{n,p}}}U_m(\bar c_2)\boldsymbol{1}_{\Omega_m}$, and $U_m \colon \qty(0,c_{2,\mathrm{max},m})  \rightarrow \mathbb{R}$. Then, $J$ is expressed as 
    \begin{equation}\label{eq:def_J}
        J =\sum_{m\in \set{\mathrm{n,p}}}J_{m}\qty(c_1,\bar c_2,\eta)\boldsymbol{1}_{\Omega_m},
    \end{equation}
    where 
    $J_m \colon (0,+\infty) \times \qty(0,c_{2,\mathrm{max},m}) \times \mathbb{R}  \rightarrow \mathbb{R}$. On the other hand, the function $f\colon (0,+\infty) \rightarrow \mathbb{R}$ is assumed to be independent of the spatial coordinates. The assumed domains of these functions imply the physical constraints $ c_1>0$ and $0 < c_2 < c_{2,\max}$. 
    
    To complete the system, we introduce 
    the initial conditions,
    \begin{gather*}\label{eq:strong_initial}
        c_1(x,0) = c_{10}(x) > 0, \quad x \in \Omega_1, \\ 
        c_2(x,r,0) = c_{20}(x,r),\quad 0 < c_{20}(x,r) < c_{2,\max}(x),\quad \qty(x,r) \in \Omega_{2r},
    \end{gather*}
    the boundary conditions with a given function $I\colon \Gamma  \rightarrow \mathbb{R}$ and a constant $F$,
    \begin{gather}
        -\left.\left(\kappa_1 \nabla \phi_1 - \kappa_2 \nabla f\qty(c_1)\right) \cdot \boldsymbol{n}\right|_{\partial \Omega}=0, \nonumber\\
        -\left.\sigma \nabla \phi_2 \cdot \boldsymbol{n}\right|_{\Gamma}=I,\quad -\left.\sigma \nabla \phi_2 \cdot \boldsymbol{n}\right|_{\partial \Omega_2 \setminus \Gamma}=0,\label{eq:strong_bdry_phi_2}\nonumber\\
        -\left.k_1 \nabla c_1 \cdot \boldsymbol{n}\right|_{\partial \Omega} = 0, \nonumber\\
        -r^2 k_2\left.\pdv{c_2(x)}{r}\right|_{r=0}= 0,\quad  -r^2 k_2\left.\pdv{c_2(x)}{r}\right|_{r=R_\mathrm{s}(x)}= \frac{J(x)}{F}, \quad x\in \Omega_2,\label{eq:strong_bdry_c_2}
    \end{gather}
    and the interface conditions 
    \begin{equation*}\label{eq:DFN_strong_interface}
        \begin{aligned}
        \left. \llbracket \phi_1 \rrbracket \right|_{\Gamma_{\mathrm{s}k}} &= 0, \quad \left. \left\llbracket \qty(\kappa_1\nabla \phi_1  -\kappa_2 \nabla f(c_1)) \cdot \boldsymbol{\nu} \right\rrbracket\right|_{\Gamma_{\mathrm{s}k}}= 0, \quad &k \in \qty{\mathrm{n,p}},\\ 
        \left. \llbracket c_1 \rrbracket \right|_{\Gamma_{\mathrm{s}k}} &= 0,\quad \left. \left\llbracket k_1 \nabla c_1 \cdot \boldsymbol{\nu} \right\rrbracket \right|_{\Gamma_{\mathrm{s}k}}= 0 , \quad &k \in \qty{\mathrm{n,p}},
        \end{aligned}   
    \end{equation*}
    where $\boldsymbol{n} $ denotes the outward unit normal vector, $\Gamma$ is a measurable subset of $\partial \Omega_2$ with positive measure,  $\Gamma_\mathrm{sn}$ and $\Gamma_\mathrm{sp}$ are the separator-negative and separator-positive interfaces respectively, $\left. \llbracket v \rrbracket \right|_{\Sigma}  $ represents the jump of a quantity $v$ across the interface $ \Sigma$, and $\boldsymbol{\nu} $ denotes the outward unit normal vector at $\partial \Omega_\mathrm{s}$.

    Note that the subsystem involving $\phi_1$ and $\phi_2$ forms a Neumann problem. For the existence of a solution, it is necessary that the following compatibility condition holds:
    \begin{equation}\label{eq:intro_compatibility_condition}
        \int_{\Omega_2} a_2 J(t) \, \mathrm{d}x = \int_{\Gamma}^{} I\qty(t)\, \dd s =0.
    \end{equation}
    Furthermore, to ensure the uniqueness of the solution, we impose the additional constraint:
    \begin{equation}\label{eq:intro_constraint_mean}
        \int_\Omega \phi_1 \, \dd x = 0.
    \end{equation}
    \begin{remark}
        Apart from making the coupling quasilinear by the coefficients \eqref{eq:def_kappa}, the coupling is influenced by the nonlinear function $J$ on the right-hand side of \eqref{eq:DFN_dfn_strong} and the Neumann boundary condition \eqref{eq:strong_bdry_c_2}, which represents the homogenization process and the electrochemical coupling between the electrode scale and the particle scale.
    \end{remark}

    More specifically, the DFN model is a particular case of the more general mathematical formulation \eqref{eq:DFN_dfn_strong}, when $\kappa_2 = \frac{2RT}{F}\kappa_1\qty(1-t^0_+)$, $f = \ln$, and $J$ is governed by the Butler-Volmer equation \cite{bermejo_implicit-explicit_2019}. 
    For a detailed derivation of the DFN model, we recommend referring to \cite{fuller_simulation_1994, ciucci_derivation_2011, arunachalam_veracity_2015, timms_asymptotic_pouch_2021, richardson_charge_2022}. As noted by \cite{xu2024DFNsemiFEM}, the change of variables for $\phi_1$ is avoided in this paper, ensuring that the finite element error analysis in this original formulation is also applicable to non-uniform temperature distributions.  

    \begin{assumption}\label{asp:IB_condition}
        $c_{10}\in L^2\qty(\Omega)$, 
        $c_{20} \in L^2\qty(\Omega_2 ; L^2_r \qty(0,R_\mathrm{s}(\cdot)))$.
        $c_{10}(x) > 0, \:x\in \Omega_1\; \mathrm{a.e.}$;
        $ 0 < c_{20}(x,r) < c_{2,\max}(x),\:(x,r) \in \Omega_{2r}\; \mathrm{a.e.}$.
        $I \in L^2\qty(0,T; L^2\qty(\Gamma))$ and satisfies the condition $\int_{\Gamma}^{} I\qty(t)\, \dd s =0$, $t\in \qty(0,T)\; \mathrm{a.e.}$.
    \end{assumption}

    \begin{assumption}\label{asp:nonlinear_function}
        $f \in C^1\qty(0,+\infty)$. For $m\in\set{\mathrm{n,p}}$,
        $\kappa_{1m},\kappa_{2m} \in C^2\qty(0,+\infty)$,  $ U_m \in C^2\qty(0,c_{2,\mathrm{max},m})$, $J_m \in C^1\qty((0,+\infty) \times \qty(0,c_{2,\mathrm{max},m})  \times \mathbb{R}  )$ and 
        \begin{displaymath}
            \exists \alpha >0\colon\: \forall \qty(c_1,\bar c_2, \eta) \in (0,+\infty) \times \qty(0,c_{2,\mathrm{max},m})  \times \mathbb{R}, \quad   \pdv{ J_m}{\eta}\qty(c_1,\bar c_2, \eta) \ge \alpha. 
        \end{displaymath} 
    \end{assumption}

    Given $T>0$ and setting $\Omega_{1T} = \Omega_1 \times \qty(0,T)$, $\Omega_{2rT} = \Omega_{2r} \times \qty(0,T)$. With function spaces and notations introduced in \ref{app_sec:func_space}, the weak solution of \eqref{eq:DFN_dfn_strong} can be defined as a quadruplet $\left(\phi_1, \phi_2, c_1, c_2\right)$, 
    \begin{gather}
        \notag \phi_1 \in L^2\left(0, T; H^1_*(\Omega) \right), \quad 
        \phi_2 \in L^2\left(0, T; H^1\left(\Omega_2\right)\right),  \\
        \notag c_1 \in C\left([0, T); L^{2}(\Omega)\right) \cap L^2\left(0, T; H^1(\Omega)\right), \quad \pdv{c_{1}}{t}  \in L^{2}\left(0, T; L^2(\Omega)\right),  \\
        \notag c_2 \in C\left([0, T); L^2\qty(\Omega_2;L_r^2\qty(0,R_\text{s}\qty(\cdot)))\right) \cap L^2\left(0, T; L^2\qty(\Omega_2;H_r^1\qty(0,R_\text{s}\qty(\cdot))) \right),  \\ 
        \notag \pdv{c_{2}}{t}  \in L^{2}\left(0, T;  L^2\qty(\Omega_2;L_r^2\qty(0,R_\text{s}(\cdot)))\right), 
    \end{gather}
    such that  $c_1(0) = c_{10}$, $c_2(0) = c_{20}$, $\kappa_1,\, \kappa_2 \in L^\infty\qty(\Omega_{1T})$, $f\qty(c_1) \in L^2\qty(0,T;H^1\qty(\Omega))$, $J \in L^2\qty(0,T;L^2\qty(\Omega_2))$,  $c_{1}(x,t)>0$, $(x,t) \in \Omega_{1T}\;\mathrm{a.e.}$, $ 0 < c_{2}(x,r,t) < c_{2,\max}(x)$, $(x,r,t)\in \Omega_{2rT}\;\mathrm{a.e.}$,
    and for $t \in(0, T)\text{ a.e.}$, 

    \begin{multline}\label{eq:DFN_weak_phi1}
        \int_{\Omega} \kappa_1\qty(t) \nabla \phi_1\qty(t) \cdot \nabla \varphi\, \dd x - \int_{\Omega} \kappa_2\qty(t) \nabla f\qty(c_1\qty(t)) \cdot \nabla \varphi \, \dd x -\int_{\Omega_2} a_2 J\qty(t) \varphi \, \dd x = 0,\\ 
        \forall \varphi \in H^{1}\left(\Omega\right),
    \end{multline}
    \begin{equation}\label{eq:DFN_weak_phi2_eqv}
        \int_{\Omega_2} \sigma \nabla \phi_2\qty(t) \cdot \nabla \varphi \, \dd x +\int_{\Omega_2}a_2 J\qty(t)\varphi \, \dd x + \int_{\Gamma}^{} I\qty(t) \varphi\, \dd s =0,\, \forall \varphi \in H^{1}\left(\Omega_2\right),
    \end{equation}
    \begin{equation}\label{eq:DFN_weak_c1}
        \int_{\Omega} \varepsilon_1 \dv{c_1}{t}\qty(t) \varphi \, \dd x +\int_{\Omega} k_1 \nabla c_1\qty(t) \cdot \nabla \varphi \, \dd x - \int_{\Omega_2} a_1 J\qty(t) \varphi \, \dd x =0, \, \forall \varphi \in H^{1}\left(\Omega\right),
    \end{equation}
    \begin{multline}\label{eq:DFN_weak_c2}
        \int_{\Omega_2} \int_0^{R_\text{s}(x)} \dv{c_2}{t}\qty(t) \psi r^2 \, \dd r\dd x + \int_{\Omega_2} \int_0^{R_\text{s}(x)} k_2 \frac{\partial c_2\qty(t)}{\partial r} \frac{\partial \psi}{\partial r} r^2 \;\dd r \dd x + \\   \int_{\Omega_2} \frac{R_\text{s}^2(x)}{F} J(x,t) \psi\left(x,R_\text{s}(x)\right) \dd x =0, \quad
        \forall \psi \in L^2\qty(\Omega_2;H_r^{1}\left(0, R_\text{s}(\cdot)\right)).
    \end{multline}

    \begin{remark}
        In certain contexts, it is more convenient to adopt an alternative formulation of \eqref{eq:DFN_weak_phi1}-\eqref{eq:DFN_weak_c2}, in particular by replacing \eqref{eq:DFN_weak_phi2_eqv} with the following variational equation:
        \begin{equation}\label{eq:DFN_weak_phi2}
            \int_{\Omega_2} \sigma \nabla \phi_2\qty(t) \cdot \nabla \varphi \, \dd x +\int_{\Omega_2}a_2 J\qty(t)\varphi \, \dd x + \int_{\Gamma}^{} I\qty(t) \varphi\, \dd s =0,\, \forall \varphi \in H_*^{1}\left(\Omega_2\right),
        \end{equation}
        where the test space coincides with the solution space. This formulation is particularly advantageous for establishing the existence of weak solutions  (see \cite[Proposition 2.16]{ramos_well-posedness_2016}). 
        The equivalence between the two formulations follows from the compatibility condition \eqref{eq:intro_compatibility_condition}.
        For consistency with the finite element semi-discrete error analysis developed in \cite{xu2024DFNsemiFEM}, we adopt the $H_*^1(\Omega)$ formulation \eqref{eq:DFN_weak_phi2}  throughout the remainder of this paper.
    \end{remark}
    
    Under a slightly more stringent version of Assumptions~\ref{asp:IB_condition} and \ref{asp:nonlinear_function} with higher regularity, local existence and uniqueness of the solution to the DFN model for $\dim \Omega = 1 $ was established by \cite{kroener_mathematical_2016,diaz_well-posedness_2019}. Moreover, the solution can be extended to a globally unique solution under specific conditions \cite{diaz_well-posedness_2019}. However, the problem of existence and uniqueness remains open for the case of $\dim \Omega = 2,3 $. The main mathematical challenges in this context include the strongly nonlinear source terms with singular behaviour, discontinuous and nonlinear coefficients, the lack of smoothness of the boundary and the pseudo-($N$+1)-dimensional structure.

    Efficient simulation of the DFN model continues to be an active area of research. 
    Various spatial discretization methods have been employed to convert the system of PDEs to a system of DAEs. These include the finite element method \cite{cai_lithium_2012,bermejo_implicit-explicit_2019,bermejo_numerical_2021,timms_asymptotic_pouch_2021}, finite difference method \cite{fuller_simulation_1994,newman_electrochemical_2019,mao_finitedifference_1994,nagarajan_mathematical_1998}, finite volume method \cite{zeng_efficient_2013,mazumder_faster-than-real-time_2013}, collocation method \cite{northrop_coordinate_2011,northrop_efficient_2015}, and hybrid method \cite{smith_solid-state_2006,kosch_computationally_2018}. 
    To achieve full discretization, computationally efficient time-stepping algorithms are applied, such as those detailed in \cite{brenan_numerical_1995,bermejo_implicit-explicit_2019,korotkin_dandeliion_2021,wickramanayake_novel_2024}. 
    Among these methods, the finite element method is particularly advantageous for handling irregular geometries, unconventional boundary conditions, heterogeneous compositions, and ensuring global conservation. As a result, it is increasingly adopted in battery modeling software \cite{korotkin_dandeliion_2021,aylagas_cidemod_2022,ai_jubat_2024,comsolBatteryGuide}. Additionally, the backward Euler method is commonly employed due to its unconditional stability \cite{mazumder_faster-than-real-time_2013,golmon_design_2014,han_fast_2021,aylagas_cidemod_2022,yin_batp2dfoam_2023,ai_improving_2023,kim_robust_2023}.  

    Despite notable advancements in numerical methods for the DFN model, research on rigorous error analysis remains limited \cite{bermejo_numerical_2021,xu2024DFNsemiFEM}. 
    The work in \cite{bermejo_numerical_2021} was the first to present a convergence analysis of the backward Euler finite element discretization of this model. However, the analysis has limitations:  it is inapplicable for the cases $N = 2,3$, where the imbedding $H^1\qty(D_2) \hookrightarrow C\qty(\bar D_2) $ fails and it requires additional assumptions or efforts to validate the interpolation operator $I_0^x$ used therein. Furthermore, the theoretical convergence rates presented do not align with observed numerical results. 
    Recently, a novel approach to estimating the error for $c_2$ was proposed in \cite{xu2024DFNsemiFEM}.  This approach utilizes a multiscale or pseudo-($N$+1)-dimensional projection, which aligns the analysis with the established framework from \cite{thomee_galerkin_2006}. Additionally, the inclusion of trace error estimation enabled proving optimal-order convergence concerning the radial mesh size.
    The primary aim of this paper is to extend the semi-discrete error analysis presented in \cite{xu2024DFNsemiFEM} to the fully discrete setting, addressing the associated gaps in the backward Euler finite element scheme. 

    An efficient solver is also crucial for enhancing simulation efficiency in practical applications, particularly when full discretization leads to a large system of nonlinear algebraic equations. 
    From a numerical algebra perspective, existing solvers can be categorized within the framework of the nonlinear Gauss-Seidel methods \cite{rheinboldt_iterative_1987,brune_composing_2015}, differing mainly in the way the problem is partitioned into blocks.
    Fully coupled solvers \cite{newman_electrochemical_2019,korotkin_dandeliion_2021,han_fast_2021} solve all unknown variables simultaneously using Newton's method. This approach typically converges rapidly  due to the comprehensive coupling of all equations, but it requires significant computational resources for solving the large-scale linear system at each Newton step.
    Decomposition solvers \cite{golmon_multiscale_2012,mazumder_faster-than-real-time_2013,latz_multiscale_2015,bermejo_implicit-explicit_2019,kim_robust_2023,yin_batp2dfoam_2023} decompose the problem into small block subproblems, introducing an outer iteration loop to handle the coupling between them.  Each subproblem may not be solved exactly, and an appropriate nonlinear solver may be used within an inner iteration loop.
    Generally, solvers with larger blocks, which couple more PDEs, tend to converge faster but increase storage and computational costs for solving subproblems. As emphasized in \cite{wu_newton-krylov-multigrid_2002}, understanding the problem's physics and the effect of coupling extent is crucial. 
    To accommodate varying computational capabilities, it is essential to strike a balance between solution speed and computational efficiency, motivating the development of innovative decomposition strategies tailored to the system's characteristics.

    The challenges in designing efficient solvers for the DFN model arise from the strong nonlinear reaction term and its multiscale nature. The reaction term $J$, governed by the Butler-Volmer equation \cite{latz_thermodynamic_2013}, involves complex mathematical functions such as exponentials and power roots, which hinder convergence and reduce robustness in fully decomposition solvers \cite{mazumder_faster-than-real-time_2013,kim_robust_2023,yin_batp2dfoam_2023}.  To address these difficulties, both fully coupled and tailored decomposition \cite{bermejo_implicit-explicit_2019} solvers  have been developed, offering improved convergence and robustness.  
    Furthermore, the multiscale pseudo-($N$+1)-dimensional equation introduces an additional dimension, significantly increasing computational scale and cost.  This motivates efforts to decouple $c_2$ in the outer \cite{golmon_multiscale_2012,latz_multiscale_2015,bermejo_implicit-explicit_2019} or inner iterations \cite{han_fast_2021}.
    Building on these challenges and insights, this paper also proposes a fast and robust solver that effectively controls memory usage.
    
    The primary contributions of this paper are threefold.  First, based on the finite element semi-discretization in \cite{xu2024DFNsemiFEM}, we propose the corresponding backward Euler finite element discretization and derive error estimates with optimal convergence rates for $N=1,2,3$. Second, we introduce a novel solver that fully decouples the microscopic variable $c_2$ through two effective decoupling procedures, along with an optional decomposition strategy. These approaches significantly accelerate the solution process while keeping memory usage low, making the solver particularly suitable for large-scale 2D/3D problems.  Finally, we validate the numerical convergence rates in 2D and 3D using real battery parameters and provide a comprehensive comparison with existing solvers. To the best of our knowledge, such numerical verification and comparison have not been previously reported in the literature.

    The paper is structured as follows. 
    \Cref{sec:fem_full} presents the backward Euler finite element discretization and derives error estimates with optimal convergence rates. 
    \Cref{sec:fem_solver} introduces the novel fast solver, detailing its key decoupling procedures and an optional decomposition strategy.  
    \Cref{sec:experiments} reports numerical experiments with real battery parameters to validate the error estimates and assess the solver's performance. 
    Finally, conclusions are drawn in \cref{sec:conclusions}. 
    
    \section{Error analysis of the finite element fully-discrete problems}
    \label{sec:fem_full}
        In this section, we only consider $\bar\Omega_{m}\subset \mathbb{R}^N,\; m\in\set{\mathrm{n, s, p}}$ as polygonal domains such that $\bar\Omega_{m}$ is the union of a finite number of polyhedra. Let $\mathcal{T}_{h,m}$ be a regular family of  triangulation for $\bar\Omega_{m},\; m\in\set{\mathrm{n, s, p}}$, and the meshes match at the interfaces, i.e., the points, edges and faces of discrete elements fully coincide at the interface. Hence, $\mathcal{T}_{h} := \bigcup_{m\in\set{\mathrm{n, s, p}}} \mathcal{T}_{h,m}$ and $\mathcal{T}_{2,h} := \bigcup_{m\in\set{\mathrm{n, p}}} \mathcal{T}_{h,m}$ are also regular family of triangulations for $\bar \Omega$ and $\bar \Omega_2$ respectively. Additionally, it is assumed that all $(K,P_K,\Sigma_K)$, $K\in\mathcal{T}_{h}$, are finite element affine families \citep{ciarlet2002finite}. For the closed interval $[0,R_\mathrm{s}(x)]$, $x\in \Omega_2$, let $\mathcal{T}_{\Delta_r}(x)$ be a family of regular meshes, such that $[0,R_\mathrm{s}(x)] = \bigcup_{I\in\mathcal{T}_{\Delta_r}(x)}I$. It is also assumed that each element of $\mathcal{T}_{\Delta_r}(x)$ is affine equivalent to a reference element. Since $R_\mathrm{s}$ is piecewise constant, here only two sets of meshes are considered, i.e.,
        \begin{equation*}
            \mathcal{T}_{\Delta_r}(x)= \left\{\begin{array}{l}\mathcal{T}_{\Delta_r \mathrm{p}}\qcomma{} x\in\Omega_\mathrm{p}, \\ \mathcal{T}_{\Delta_r \mathrm{n}}\qcomma{} x\in\Omega_\mathrm{n}.  \end{array} \right. 
        \end{equation*}
        For each element $K\in \mathcal{T}_{h}$ and $I \in \cup_{x\in \Omega_2} \mathcal{T}_{\Delta_r}(x)$, we use $h_K$ and $\Delta r_I$ for the diameter respectively. Let $h = \max_{K\in \mathcal{T}_{h}}  h_K $ and $\Delta r = \max_{I\in \cup_{x\in \Omega_2} \mathcal{T}_{\Delta_r}(x)}  \Delta r_I$.
    
        The unknowns $\phi_1$, $\phi_2$ and $c_1$ are discretized by piecewise-linear elements. Let $V_h^{(1)}\left(\bar \Omega\right)$ and $V_h^{(1)}\left(\bar \Omega_2\right)$ be the corresponding finite element spaces for $c_1$ and $\phi_2$, while 
        \begin{equation*}
        W_h\left(\bar \Omega\right)= \qty{w_h \in V_h^{(1)}\left(\bar \Omega\right):\: \int_{\Omega} w_h(x) \dd x = 0},
        \end{equation*}
        for $\phi_1$. 
        For the pseudo-($N$+1) dimensional $c_2$, piecewise-constant elements are used for discretization in the $x$ coordinate while piecewise-linear elements in the $r$ coordinate. Namely, a tensor product finite element space 
        \begin{equation}
        V_{h \Delta r}\left(\bar \Omega_{2 r}\right):=  \qty(V_h^{(0)}\left(\bar \Omega_\mathrm{n}\right) \otimes V_{\Delta r}^{(1)}\left[0,R_\mathrm{n}\right]) \bigcap \qty(V_h^{(0)}\left(\bar \Omega_\mathrm{p}\right) \otimes V_{\Delta r}^{(1)}\left[0,R_\mathrm{p}\right])
        \end{equation}
        is used.
      
      In order to obtain a full discretization, we consider a uniform mesh for the time variable $t$. Define $t_k:=k \Delta t, \, k=0,1, \ldots, K$, $\Delta t>0$ being the time-step, and $K:=[T / \Delta t]$, the integral part of $T / \Delta t$. By means of the backward Euler method, finite element fully discrete problems for  \eqref{eq:DFN_weak_phi1}-\eqref{eq:DFN_weak_c2} are proposed as follows:
    
      Given $\left(c_{1 h}^0, c_{2 h \Delta r}^0\right) \in V_h^{(1)}\left(\bar \Omega\right) \times V_{h \Delta r}\left(\bar \Omega_{2 r}\right)$, $c_{1 h}^0(x) > 0$, $0 < c_{2 h \Delta r}^0(x,r) < c_{2,\max}(x)$, find $\left(\phi_{1 h}^k, \phi_{2 h}^k, c_{1h}^k, c_{2 h\Delta r}^k\right)$  $\in W_h\left(\bar \Omega\right) \times V_h^{(1)}\left(\bar \Omega_2\right) \times V_h^{(1)}\left(\bar \Omega\right) \times V_{h \Delta r}\left(\bar \Omega_{2 r}\right)$, $k=1,\dots,K$, such that 
    \begin{equation}\label{eq:DFN_fem_phi1}
        \int_{\Omega} \kappa_{1h}^k\nabla \phi_{1h}^k \cdot \nabla w_h\, \dd x - \int_{\Omega} \kappa_{2h}^k\nabla f\qty(c_{1h}^k) \cdot \nabla w_h\,\dd x - \int_{\Omega_2} a_2 J_h^k w_h \,\dd x =0, \:\forall w_h \in W_h\left(\bar \Omega\right),
    \end{equation}
    \begin{equation}\label{eq:DFN_fem_phi2}
        \int_{\Omega_2} \sigma \nabla \phi_{2 h}^k \cdot \nabla v_{h}\, \dd x +\int_{\Omega_2} a_2 J_h^k v_h\, \dd x + \int_\Gamma I^k v_{h} \,\dd x =0,\; \forall v_h \in V_h^{(1)}\left(\bar \Omega_2\right),
    \end{equation}
    \begin{equation}\label{eq:DFN_fem_c1}
        \int_{ \Omega } \varepsilon_1 \frac{ c_{1 h}^k - c_{1 h}^{k-1}}{\tau} v_h\, \dd x +\int_{\Omega} k_{1} \nabla c_{1 h}^k \cdot \nabla v_h \, \dd x-\int_{\Omega_2} a_1 J_h^k v_h\, \dd x=0,\; \forall v_h \in V_h^{(1)}\left(\bar \Omega\right)
    \end{equation}
    \begin{multline}\label{eq:DFN_fem_c2}
        \int_{\Omega_2} \int_0^{R_\text{s}(x)} \frac{c_{2 h \Delta r}^k - c_{2 h \Delta r}^{k-1}}{\tau} v_{h \Delta r} r^2\, \dd r \dd x+\int_{\Omega_2} \int_0^{R_\text{s}(x)} k_{2} \frac{\partial c_{2 h \Delta r}^k}{\partial r} \frac{\partial v_{h \Delta r}}{\partial r} r^2 \, \dd r \dd x \\
        +\int_{\Omega_2} \frac{R_\text{s}^2(x)}{F} J_h^k(x) v_{h \Delta r}\left(x, R_\text{s}(x)\right)\, \dd x = 0, \quad \forall v_{h \Delta r} \in V_{h \Delta r}\left(\bar \Omega_{2 r}\right),
    \end{multline}
    where
    \begin{equation*}
        \kappa_{ih}^k = \sum_{m\in \set{\text{n,s,p}}}\kappa_{im}\qty(c_{1h}^k)\boldsymbol{1}_{\Omega_m}\qcomma{} i=1,2, \quad J_h^k =\sum_{m\in \set{\text{n,p}}}J_{m}\qty(c_{1h}^k,\bar c_{2h}^k,\eta_{h}^k)\boldsymbol{1}_{\Omega_m},
    \end{equation*}
    \begin{equation*}
        \bar{c}_{2 h}^k(x):=c_{2 h \Delta r}^k\left(x, R_\text{s}(x)\right), \quad \eta_{h}^k = \phi_{2h}^k - \phi_{1h}^k - U_h^k,\quad U_h^k = \sum_{m\in \set{\text{n,p}}} U_m(\bar c_{2h}^k)\boldsymbol{1}_{\Omega_m}.
    \end{equation*}

    \begin{remark}
        If the discrete compatibility condition 
        \begin{equation}\label{eq:fem_discrete_conservation}
            \int_{\Omega_2} a_2 J_h^k \, \dd x =  \int_{\Gamma} I^k \,\dd x  =0
        \end{equation}
        holds, which is the discrete analogue of \eqref{eq:intro_compatibility_condition}, then the finite element equation \eqref{eq:DFN_fem_phi1} is equivalent to
        \begin{equation*}\label{eq:DFN_fem_phi1_eqv}
            \int_{\Omega} \kappa_{1h}^k\nabla \phi_{1h}^k \cdot \nabla v_h\, \dd x - \int_{\Omega} \kappa_{2h}^k\nabla f\qty(c_{1h}^k) \cdot \nabla v_h\,\dd x - \int_{\Omega_2} a_2 J_h^k v_h \,\dd x =0, \:\forall v_h \in V_h^{(1)}\left(\bar \Omega\right).
        \end{equation*} 
        See \Cref{rm:exp_conservation} for practical considerations regarding the preservation of this condition under numerical quadrature.
    \end{remark}

    To present the finite element error analysis, we need to make the following assumptions.
    \begin{assumption}\label{asp:DFN_fem_IC}
        $$ c_{10} \in  H^2_\mathrm{pw}\qty(\Omega_1),\quad c_{20} \in  H^1\left(\Omega_2 ; H_r^1(0, R_\mathrm{s}\qty(\cdot))\right) \cap L^2\left(\Omega_2 ; H_r^2\qty(0, R_\mathrm{s}\qty(\cdot))\right).$$
    \end{assumption}
    \begin{assumption}\label{asp:DFN_fem_regularity}
        \begin{gather*}
            \phi_1 \in C\left(\qty[0, T] ; H_\mathrm{p w}^2\left(\Omega_1\right)\right), \phi_2 \in C\left(\qty[0, T] ; H_\mathrm{p w}^2\left(\Omega_2\right)\right),  
            c_1 \in H^1\left(0, T ; H_\mathrm{p w}^2\left(\Omega_1\right)\right), \\
            c_2 \in H^1\qty(0, T ; H^1\qty(\Omega_2 ; H_r^1(0, R_\text{s}\qty(\cdot))) \cap L^2\qty(\Omega_2 ; H_r^2\qty(0, R_\text{s}\qty(\cdot)))).
        \end{gather*}
    \end{assumption}
    \begin{assumption}\label{asp:DFN_prior_bound}
    There exist constants $L,M,N>0$, such that for $t\in \qty[0,T]$ a.e.,
    \begin{gather*}
        \left\| \phi_i(t)\right\|_{L^{\infty}\qty(\Omega_i)},\, \left\|\phi_{i h}(t)\right\|_{L^{\infty}\qty(\Omega_i)},\,
        \left\| \nabla \phi_i(t)\right\|_{L^{\infty}\qty(\Omega_i)},\,\norm{\nabla c_1(t)}_{L^{\infty}\qty(\Omega)} \leq L, \, i=1,2, \\ 
        \frac{1}{M} \leq c_1(t),\; c_{1h}(t) \leq M, \quad \frac{1}{N} \leq \frac{\bar c_2(t)}{c_{2,\max }},\; \frac{\bar c_{2h\Delta r}(t)}{c_{2,\max }} \leq\qty(1-\frac{1}{N}). \\ \label{eq:DFN_assumption_bound_c12}
    \end{gather*}
    \end{assumption}

    \subsection{Auxiliary results}
        To analyze the finite element error for the multiscale system, it is essential to introduce several projection operators into finite element spaces, particularly tensor product spaces, along with their approximation properties.

        Define the projection operator $P_{h}$ from $H^1(\Omega)$ to $V_h^{(1)}(\bar \Omega)$ such that $\forall u \in H^1(\Omega)$,
        \begin{equation}\label{eq:DFN_c1_projection_h}
            \int_{\Omega} k_1 \nabla\left(u-P_{h} u\right) \cdot \nabla \varphi_h \, \dd x+\int_{\Omega}  \left(u-P_{h} u\right) \varphi_h \, \dd x =0, \quad \forall \varphi_h \in V_h^{(1)}(\bar \Omega).
        \end{equation} 
        It is evident that $P_h$ is well-defined, with the following approximation error \cite{xu1982estimate}
        \begin{gather}\label{eq:DFN_c1_projection_h_error}
                \norm{u-P_{h} u}_{H^1\qty(\Omega)}  \le Ch\norm{u}_{H_\mathrm{pw}^2\qty(\Omega_1)}.
        \end{gather}
        Define the projection $P_{h\Delta r}: L^2\left(\Omega_2 ; H_r^1\left(0, R_\mathrm{s}\qty(\cdot)\right)\right) \rightarrow V_{h \Delta r}\left(\bar \Omega_{2 r}\right)$,
        \begin{multline}\label{eq:DFN_c2_projection_hr}
            \int_{\Omega_2} \int_0^{R_\mathrm{s}\qty(x)}\left(k_2 \frac{\partial\left(P_{h\Delta r} w-w\right)}{\partial r} \pdv{w_{h\Delta r}}{r} + \left(P_{h\Delta r} w-w\right) w_{h\Delta r}\right) r^2\, \dd r \dd x = 0, \\
            \forall w_{h\Delta r} \in V_{h \Delta r}\left(\bar \Omega_{2 r}\right) ,
        \end{multline} 
        and the projection error estimates \cite{xu2024DFNsemiFEM} are given as follows:
        \begin{lemma}\label{lemma:DFN_projection_hdr}
            If $w\in H^1\left(\Omega_2; H_r^1\left(0, R_\mathrm{s}\qty(\cdot)\right)\right) \cap L^2\left(\Omega_2 ; H_r^2\left(0, R_\mathrm{s}\qty(\cdot)\right)\right) $, then for $q=0,1$,
            \begin{equation*}\label{eq:DFN_projection_hdr_error}
                \norm{w-P_{h\Delta r}w}_{0,\Omega_2 ; q,r} \le C \qty(h\|w\|_{1,\Omega_2 ; 1,r} + \qty(\Delta r)^{2-q} \|w\|_{0,\Omega_2 ; 2,r}). 
            \end{equation*}
        \end{lemma}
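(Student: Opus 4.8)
The plan is to establish the stated projection error estimate for $P_{h\Delta r}$ by exploiting the tensor-product structure of $V_{h\Delta r}(\bar\Omega_{2r})$ and the weighted $r^2\,\mathrm{d}r$ inner product, reducing the pseudo-$(N+1)$-dimensional problem to a one-dimensional radial Ritz projection parametrized by $x$, composed with a piecewise-constant projection in $x$. First I would observe that the bilinear form defining $P_{h\Delta r}$ in \eqref{eq:DFN_c2_projection_hr} is the $L^2(\Omega_2;\,\cdot\,)$-assembly of the weighted $H^1_r$-elliptic form $a_x(v,w):=\int_0^{R_{\mathrm s}(x)}\bigl(k_2\,\partial_r v\,\partial_r w + v w\bigr)r^2\,\mathrm{d}r$, which is coercive and bounded on $H^1_r(0,R_{\mathrm s}(x))$ uniformly in $x$ (since $k_2$ and $R_{\mathrm s}$ are piecewise constant taking finitely many values). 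Because the test space factors as (piecewise constants in $x$) $\otimes$ (piecewise linears in $r$), one can split $P_{h\Delta r} = \Pi^x_h \circ R_{\Delta r}$ where $R_{\Delta r}$ is, for a.e.\ fixed $x$, the standard one-dimensional weighted elliptic (Ritz) projection onto $V^{(1)}_{\Delta r}[0,R_{\mathrm s}(x)]$, and $\Pi^x_h$ is the $L^2(\Omega_m)$-orthogonal projection onto piecewise constants on $\mathcal T_{h,m}$; the key point is that these two commute because $\Pi^x_h$ acts only on the $x$-variable and the radial form $a_x$ has $x$-independent coefficients on each subdomain $\Omega_m$, $m\in\{\mathrm n,\mathrm p\}$.

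The second step is to bound $\|w - R_{\Delta r}w\|_{0,\Omega_2;q,r}$ for $q=0,1$. For fixed $x$, classical one-dimensional finite element theory for the weighted Sturm–Liouville form (the weight $r^2$ is bounded above and the degeneracy at $r=0$ is harmless for piecewise-linear approximation, as already handled in \cite{xu2024DFNsemiFEM}) gives the $H^1_r$ estimate $\|w(x,\cdot)-R_{\Delta r}w(x,\cdot)\|_{1,r}\le C\,\Delta r\,\|w(x,\cdot)\|_{2,r}$ and, via a duality (Aubin–Nitsche) argument in the weighted norm, the $L^2_r$ estimate $\|w(x,\cdot)-R_{\Delta r}w(x,\cdot)\|_{0,r}\le C(\Delta r)^2\|w(x,\cdot)\|_{2,r}$, with $C$ independent of $x$. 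Squaring and integrating over $\Omega_2$ yields the $(\Delta r)^{2-q}\|w\|_{0,\Omega_2;2,r}$ contribution. Then I would estimate $\|(I-\Pi^x_h)R_{\Delta r}w\|_{0,\Omega_2;q,r}$: since $\Pi^x_h$ is an $L^2$-orthogonal projection onto piecewise constants, standard approximation theory gives $\|(I-\Pi^x_h)v\|_{L^2(\Omega_2)}\le Ch\,\|v\|_{H^1(\Omega_2)}$ applied with $v$ the (Hilbert-space-valued) function $x\mapsto R_{\Delta r}w(x,\cdot)$ in the $H^q_r$ norm, giving $\le Ch\,\|R_{\Delta r}w\|_{1,\Omega_2;q,r}$; finally the stability of $R_{\Delta r}$ in the weighted $H^1_r$ norm (it is an $a_x$-orthogonal projection, hence $a_x$-stable, which controls $H^1_r$) upgraded to the $x$-$H^1$ sense — using that $R_{\Delta r}$ commutes with $x$-differences and is uniformly $H^1_r$-bounded — bounds $\|R_{\Delta r}w\|_{1,\Omega_2;q,r}\le C\|w\|_{1,\Omega_2;1,r}$ for both $q=0,1$, producing the $h\|w\|_{1,\Omega_2;1,r}$ term. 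A triangle inequality $\|w-P_{h\Delta r}w\|\le \|w-R_{\Delta r}w\| + \|(I-\Pi^x_h)R_{\Delta r}w\|$ then assembles the claimed bound.

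I expect the main obstacle to be making the commutation $P_{h\Delta r}=\Pi^x_h\circ R_{\Delta r}$ and the $H^1$-in-$x$ stability of $R_{\Delta r}$ fully rigorous in the Bochner / Hilbert-space-valued setting: one must verify that $x\mapsto R_{\Delta r}w(x,\cdot)$ genuinely lies in $H^1(\Omega_2;H^1_r)$ with a norm controlled by $\|w\|_{1,\Omega_2;1,r}$, which requires differentiating the defining variational identity in $x$ (legitimate because the mesh $\mathcal T_{\Delta r\,m}$ and coefficients are $x$-independent on each $\Omega_m$, so the discrete space does not move with $x$) and then invoking uniform coercivity. A secondary technical point is the behavior of the weight $r^2$ near $r=0$: one should confirm that the weighted one-dimensional interpolation and duality estimates hold with constants independent of $x$, which is exactly the type of radial estimate already established in \cite{xu2024DFNsemiFEM}, so I would cite that framework rather than re-derive it. The remaining pieces are routine: coercivity/boundedness of $a_x$, Céa's lemma in one dimension, the Aubin–Nitsche duality, and the piecewise-constant $L^2$-projection estimate are all standard once the tensor-product bookkeeping is in place.
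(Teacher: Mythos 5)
The paper does not actually prove \cref{lemma:DFN_projection_hdr}; it is stated as a known result imported verbatim from the companion reference \cite{xu2024DFNsemiFEM} (``the projection error estimates \cite{xu2024DFNsemiFEM} are given as follows''). So there is no in-text proof to compare against. Judged on its own merits, your reconstruction is the natural and essentially correct argument for a projection onto a tensor-product space like $V_{h\Delta r}=V_h^{(0)}\otimes V_{\Delta r}^{(1)}$, and it almost certainly mirrors the structure of the proof in the cited reference: split $P_{h\Delta r}$ into a one-dimensional weighted Ritz projection $R_{\Delta r}$ in the radial variable and a piecewise-constant $L^2$-orthogonal projection $\Pi_h^x$ in $x$, use the $O(\Delta r)$ energy estimate plus Aubin--Nitsche duality for the $(\Delta r)^{2-q}\|w\|_{0,\Omega_2;2,r}$ contribution, and use the $O(h)$ estimate for $\Pi^x_h$ together with $H^1_r$-stability of $R_{\Delta r}$ and its commutation with $\nabla_x$ for the $h\|w\|_{1,\Omega_2;1,r}$ contribution.

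Two remarks on making the sketch airtight. First, your claimed factorization $P_{h\Delta r}=\Pi_h^x\circ R_{\Delta r}$ is correct but the cleaner way to see it is to test \eqref{eq:DFN_c2_projection_hr} with $w_{h\Delta r}=\boldsymbol{1}_{\hat e_i}\psi_j$ and observe that, because $P_{h\Delta r}w$ is $x$-constant on $\hat e_i$ and $k_2,R_{\mathrm s}$ are element-wise constant, the equation on $\hat e_i$ is exactly the defining equation of $R_{\Delta r}$ applied to the $x$-average $\overline{w}=\Pi^x_h w|_{\hat e_i}$, i.e.\ $P_{h\Delta r}=R_{\Delta r}\circ\Pi^x_h$; commutativity of $R_{\Delta r}$ with the Bochner $x$-integral (since the radial form is $x$-independent on each $\hat e_i$) then gives the form you use. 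Second, be careful that the estimate $\|R_{\Delta r}w\|_{1,\Omega_2;0,r}\le C\|w\|_{1,\Omega_2;1,r}$ for the $q=0$ branch genuinely goes through the $H^1_r$-stability route as you indicate — an $L^2_r$-stability of $R_{\Delta r}$ is \emph{not} immediate because $R_{\Delta r}$ is an $a_x$-orthogonal (not $L^2_r$-orthogonal) projection, so one must first majorize $\|\cdot\|_{0,r}$ by $\|\cdot\|_{1,r}$ and then invoke Céa. Your write-up does this implicitly, but it is worth stating explicitly to avoid the appearance of an unjustified $L^2_r$-stability claim. Relying on the degenerate-weight $H^2_r$-regularity and the weighted interpolation estimates from \cite{xu2024DFNsemiFEM} is acceptable here, although note the slight circularity: those are precisely the ingredients the cited reference had to establish to prove this very lemma, so one cannot use \cref{lemma:DFN_projection_hdr} itself in that cite.
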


        \begin{lemma}\label{lemma:DFN_projection_hdr_surf}
            If $w\in H^1\left(\Omega_2 ; H_r^1\left(0, R_\mathrm{s}\qty(\cdot)\right)\right) \cap L^2\left(\Omega_2 ; H_r^2\left(0, R_\mathrm{s}\qty(\cdot)\right)\right) $, we have
            \begin{equation*}\label{eq:DFN_projection_hdr_surf_error}
                \norm{\qty(w-P_{h\Delta r}w)\qty(\cdot,R_\mathrm{s}\qty(\cdot))}_{0,\Omega_2} \le C \qty(h\|w\|_{1,\Omega_2 ; 1,r} + \qty(\Delta r)^2 \|w\|_{0,\Omega_2 ; 2,r}).
            \end{equation*}
        \end{lemma}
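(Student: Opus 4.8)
The plan is to exploit the tensor-product structure of $V_{h\Delta r}(\bar\Omega_{2r})$, which makes $P_{h\Delta r}$ decouple into an $L^2$-average in $x$ followed by a one-dimensional radial Ritz projection, and then to split the surface error into an ``$x$-direction'' part, responsible for the $O(h)$ contribution, and a purely radial part, responsible for the $O((\Delta r)^2)$ contribution. Since the bilinear form in \eqref{eq:DFN_c2_projection_hr} contains no $x$-derivatives and couples distinct cells $K\in\mathcal{T}_{2,h}$ in no way, testing \eqref{eq:DFN_c2_projection_hr} against $\boldsymbol{1}_K(x)\psi(r)$ for each $K$ and each admissible piecewise-linear $\psi$ shows that on $K$ one has $P_{h\Delta r}w=\pi_{\Delta r}\bar w_K$, where $\bar w_K(r):=|K|^{-1}\int_K w(x,r)\,\mathrm{d}x$ and $\pi_{\Delta r}$ is the Ritz projection onto the $r$-mesh piecewise-linears for the weighted form $\langle u,v\rangle_R:=\int_0^R(k_2u'v'+uv)\,r^2\,\mathrm{d}r$ (with $R\in\{R_\mathrm{n},R_\mathrm{p}\}$ the radius and $k_2$ the constant value attached to $K$). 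Then, for $x\in K$,
\[
(w-P_{h\Delta r}w)(x,R_\mathrm{s}(x))=\bigl(w(x,R_K)-\bar w_K(R_K)\bigr)+\bigl(\bar w_K(R_K)-\pi_{\Delta r}\bar w_K(R_K)\bigr),
\]
and since the second term is constant in $x$ on $K$, it suffices to bound $\sum_K\int_K|w(x,R_K)-\bar w_K(R_K)|^2\,\mathrm{d}x$ and $\sum_K|K|\,|\bar w_K(R_K)-\pi_{\Delta r}\bar w_K(R_K)|^2$.

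For the first sum I would observe that, after the routine identification of the $r$-trace of $w\in H^1(\Omega_2;H^1_r)$ with an $H^1(\Omega_2)$ function, $x\mapsto w(x,R_K)$ belongs to $H^1(K)$ with $L^2(K)$-average $\bar w_K(R_K)$; the Poincar\'e--Wirtinger inequality on the shape-regular element $K$ then gives $\int_K|w(x,R_K)-\bar w_K(R_K)|^2\,\mathrm{d}x\le Ch_K^2\int_K|\nabla_x w(x,R_K)|^2\,\mathrm{d}x$, and a one-dimensional trace inequality at $r=R_K$ (uniform in $K$ because $R_K\in\{R_\mathrm{n},R_\mathrm{p}\}$ stays away from the singularity of the weight at $r=0$) bounds $|\nabla_x w(x,R_K)|$ by $C\|\nabla_x w(x,\cdot)\|_{1,r}$. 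Summing over $K$ controls the first sum by $Ch^2\|w\|_{1,\Omega_2;1,r}^2$.

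The crux is the second, radial, sum: I need the optimal $(\Delta r)^2$ rate for the endpoint value of the one-dimensional Ritz projection error, whereas combining \Cref{lemma:DFN_projection_hdr} with a plain trace or inverse estimate only gives $(\Delta r)^{3/2}$. I would obtain it by a one-dimensional duality (Aubin--Nitsche) argument: let $G$ solve $\langle G,v\rangle_R=v(R)$ for all $v\in H^1_r(0,R)$, i.e.\ $-(r^2k_2G')'+r^2G=0$ on $(0,R)$ with natural data $r^2k_2G'|_{r=R}=1$ and $r^2k_2G'|_{r=0}=0$. Because the pole of this Green's function lies on the boundary rather than in the interior, $G$ is in fact smooth (explicitly a multiple of $\sinh(r/\sqrt{k_2})/r$, which is regular at $0$), so $\|G\|_{2,r}\le C$ uniformly over the finitely many admissible $(R,k_2)$. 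Galerkin orthogonality of $g-\pi_{\Delta r}g$ gives $g(R)-\pi_{\Delta r}g(R)=\langle g-\pi_{\Delta r}g,\,G-\pi_{\Delta r}G\rangle_R$, and two uses of the classical one-dimensional weighted $H^1_r$-estimate $\|u-\pi_{\Delta r}u\|_{1,r}\le C\Delta r\,\|u\|_{2,r}$ yield $|g(R)-\pi_{\Delta r}g(R)|\le C(\Delta r)^2\|g\|_{2,r}$. Taking $g=\bar w_K$, using $\|\bar w_K\|_{2,r}^2\le|K|^{-1}\int_K\|w(x,\cdot)\|_{2,r}^2\,\mathrm{d}x$ (Jensen), and summing over $K$ bounds the second sum by $C(\Delta r)^4\|w\|_{0,\Omega_2;2,r}^2$. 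Adding the two estimates and taking square roots gives the asserted bound.

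The main obstacle is thus the radial part: squeezing the optimal power $(\Delta r)^2$ out of the boundary trace of $P_{h\Delta r}w$, which is exactly what the duality argument with the boundary-pole Green's function is for. The only real technical nuisances are checking that this Green's function, hence the constant in the endpoint estimate, is uniform over the finitely many radii $R_\mathrm{n},R_\mathrm{p}$ and electrode-wise values of $k_2$, and justifying the commutation of the $r$-trace with $x$-differentiation that underlies the Poincar\'e--Wirtinger step for $w\in H^1(\Omega_2;H^1_r)$.
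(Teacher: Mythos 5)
The paper itself does not prove this lemma: together with \cref{lemma:DFN_projection_hdr} it is stated and attributed to the companion semi-discrete paper \cite{xu2024DFNsemiFEM}, so there is no in-paper proof to compare against. On its own merits, your reconstruction is sound and takes what is almost surely the intended route. The elementwise identification $P_{h\Delta r}w|_K=\pi_{\Delta r}\bar w_K$ is correct (the bilinear form in \eqref{eq:DFN_c2_projection_hr} has no $x$-derivatives and the trial/test space is piecewise constant in $x$, so testing with $\boldsymbol{1}_K\psi$ decouples the cells exactly); the split of the surface error into an $x$-averaging part and a radial Ritz part is then natural, and the Poincar\'e--Wirtinger-plus-trace bound for the first gives exactly the $O(h)$ term. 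The duality step is the genuine content: you rightly observe that merely combining \cref{lemma:DFN_projection_hdr} with the trace inequality of \cref{prop:DFN_radial_surface} (or its multiplicative variant) loses half a power of $\Delta r$, and that the optimal $O\bigl((\Delta r)^2\bigr)$ must come from Aubin--Nitsche against the endpoint Green's function $G$. Your explicit formula $G\propto\sinh(r/\sqrt{k_2})/r$ is correct (it solves $-(r^2k_2G')'+r^2G=0$ with the Neumann data you wrote, is analytic at $r=0$ since $\sinh z/z$ is entire, and depends on only the finitely many electrode values of $R$ and $k_2$, so $\|G\|_{2,r}$ is uniformly bounded); the Galerkin-orthogonality manipulation $g(R)-\pi_{\Delta r}g(R)=\langle G-\pi_{\Delta r}G,\,g-\pi_{\Delta r}g\rangle_R$ is correct by symmetry of the form; and the Jensen step to pull the average outside $\|\cdot\|_{2,r}$ is fine.

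Two technical points deserve at least a sentence of justification in a full write-up, though neither is a gap in the logic. First, the commutation $\nabla_x\bigl[w(\cdot,R_K)\bigr]=(\nabla_x w)(\cdot,R_K)$ used in the Poincar\'e--Wirtinger step follows because $r\mapsto v(R_K)$ is a bounded linear functional on $H^1_r(0,R_K)$ and hence commutes with the weak $x$-derivative of the Banach-space-valued map $x\mapsto w(x,\cdot)$; you flag this, and it is indeed routine but should be said. Second, the one-dimensional weighted Ritz (hence interpolation) estimate $\|u-\pi_{\Delta r}u\|_{1,r}\le C\,\Delta r\,\|u\|_{2,r}$, which you invoke twice, needs a short Bramble--Hilbert argument on the reference element adapted to the degenerate weight $\hat r^2$ — continuity of $u\in H^2_r$ up to $r=0$ (so that nodal interpolation is well-defined) and the compactness of $H^2_{\hat r}\hookrightarrow H^1_{\hat r}$ both hold, but the element abutting $r=0$ is the one place where the usual homothety argument has to be redone with the weight in place. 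With those two remarks supplied, the proof is complete and matches the expected optimal rate.
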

        It can be seen from \cite{xu2024DFNsemiFEM} that \cref{lemma:DFN_projection_hdr_surf} is crucial for establishing the optimal-order error estimates with respect to $\Delta r$.
    
    \subsection{Error estimation for $c_1$ }  
    
        By virtue of the projection operator $P_h$, the finite element error can be decomposed as follows:
        \begin{equation}\label{app_eq:DFN_fem_c1_err_decomp}
            c_1\qty(t)-c_{1h}\qty(t)=\left(c_1\qty(t)-P_{h} c_1\qty(t)\right)+\left(P_{h} c_1\qty(t)-c_{1h}\qty(t)\right)=:\rho_{1}\qty(t) +\theta_{1}\qty(t).
        \end{equation}
        Since for $k=0,1$,
        \begin{equation} \label{eq:DFN_fem_c1_rho_err}
            \norm{\rho_{1}\qty(t)}_{k,\Omega} \le Ch\norm{c_1(t)}_{2,\Omega_1} \le Ch \qty(\norm{c_{10}}_{{2,\Omega_1}}+\int_0^t\norm{ \pdv{c_1}{t}\qty(s)}_{{2,\Omega_1}}\dd s),
        \end{equation}
        it remains to estimate $\theta_{1}\qty(t)$. 

        Next, since $\varepsilon_1$ is a piecewise positive constant function independent of time $t$, the norms $\norm{\varepsilon_1^{1/2}u}_{L^2\qty(\Omega)}$ and $\norm{u}_{L^2\qty(\Omega)}$ are equivalent. For simplicity, we assume $\varepsilon_1 \equiv 1$ in the subsequent analysis. 
        
        In addition, for a time-dependent variable $u(t)$, we use the notation $u^k = u\qty(t_k)$ for brevity.

    \begin{lemma}\label{lemma:DFN_fem_estimation_theta_c1}
        There exist an arbitrarily small number $\epsilon > 0$ and a positive constant $C$ that do not depend on $\tau$, $h$ and $\Delta r$, such that for each $k = 1,\dots,K$, 
        \begin{equation}\label{eq:DFN_fem_estimation_theta_c1}
            \begin{aligned}
                & \left\|\theta_1^k\right\|_{0,\Omega}^2-\left\|\theta_1^{k-1}\right\|_{0,\Omega}^2+\left\|\theta_1^k-\theta_1^{k-1}\right\|_{0,\Omega}^2+2 \underline{k_1} \tau\left\|\nabla \theta^k\right\|_{0,\Omega}^2 \\
                \leq & C h^2\left(\tau\norm{c_{10}}_{{2,\Omega_1}}^2+\tau\left\|\frac{\partial c_1}{\partial t}\right\|_{0; 2,\Omega_1}^2\right) + C h^2\left\|\frac{\partial c_1}{\partial t}\right\|_{0, k; 2,\Omega_1}^2  + C \tau^2\left\|\frac{\partial^2 c_1}{\partial t^2}\right\|_{0, k; 0,\Omega}^2 \\
                & + C\qty(\epsilon) \tau\left\|\theta_1^k\right\|_{0,\Omega}^2 + \epsilon \tau\qty(\left\|\bar c_2^k-\bar c_{2 h}^{k}\right\|_{0,\Omega_2}^2+\left\|\phi_1^k-\phi_{1h}^k\right\|_{0,\Omega}^2+\left\|\phi_2^k-\phi_{2 h}^k\right\|_{0,\Omega_2}^2).
            \end{aligned}
        \end{equation}
    \end{lemma}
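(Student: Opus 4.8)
The plan is to follow the classical energy argument for backward Euler Galerkin approximations (cf.\ \cite{thomee_galerkin_2006}), adapted to the nonlinear coupling through $J$. The first step is to set up the \emph{error equation} for $\theta_1^k$. Evaluating \eqref{eq:DFN_weak_c1} at $t=t_k$ (with $\varepsilon_1\equiv 1$ as assumed), subtracting the fully discrete equation \eqref{eq:DFN_fem_c1}, inserting $c_1^k-c_{1h}^k=\rho_1^k+\theta_1^k$ from \eqref{app_eq:DFN_fem_c1_err_decomp}, using the defining relation \eqref{eq:DFN_c1_projection_h} of $P_h$ (with $u=c_1^k$) to replace $\int_\Omega k_1\nabla\rho_1^k\cdot\nabla v_h$ by $-\int_\Omega\rho_1^k v_h$, and decomposing $\pdv{c_1}{t}(t_k)-\tau^{-1}\qty(c_{1h}^k-c_{1h}^{k-1})=\omega^k+\bar\partial\rho_1^k+\bar\partial\theta_1^k$, where $\omega^k:=\pdv{c_1}{t}(t_k)-\tau^{-1}\qty(c_1^k-c_1^{k-1})$ is the backward Euler truncation error and $\bar\partial v^k:=\tau^{-1}\qty(v^k-v^{k-1})$, one is led to the identity, valid for every $v_h\in V_h^{(1)}\qty(\bar\Omega)$,
\[
  \int_\Omega\bar\partial\theta_1^k\,v_h+\int_\Omega k_1\nabla\theta_1^k\cdot\nabla v_h=\int_\Omega\qty(\rho_1^k-\omega^k-\bar\partial\rho_1^k)v_h+\int_{\Omega_2}a_1\qty(J^k-J_h^k)v_h .
\]

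Next I would test this with the admissible choice $v_h=\theta_1^k$. The elementary identity $2\tau\int_\Omega\bar\partial\theta_1^k\,\theta_1^k=\norm{\theta_1^k}_{0,\Omega}^2-\norm{\theta_1^{k-1}}_{0,\Omega}^2+\norm{\theta_1^k-\theta_1^{k-1}}_{0,\Omega}^2$ and the coercivity bound $\int_\Omega k_1\abs{\nabla\theta_1^k}^2\ge\underline{k_1}\norm{\nabla\theta_1^k}_{0,\Omega}^2$ reproduce, after multiplication by $2\tau$, exactly the left-hand side of \eqref{eq:DFN_fem_estimation_theta_c1}, bounded by $2\tau$ times the pairings of $\rho_1^k$, $-\omega^k$, $-\bar\partial\rho_1^k$ and $a_1\qty(J^k-J_h^k)$ against $\theta_1^k$. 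The three linear pairings are handled by Cauchy--Schwarz and Young's inequality, each contributing a term $C(\epsilon)\tau\norm{\theta_1^k}_{0,\Omega}^2$ plus a data term: $\norm{\rho_1^k}_{0,\Omega}\le Ch\norm{c_1^k}_{2,\Omega_1}$ from \eqref{eq:DFN_c1_projection_h_error} together with $\norm{c_1^k}_{2,\Omega_1}\le\norm{c_{10}}_{2,\Omega_1}+\int_0^{t_k}\norm{\pdv{c_1}{t}(s)}_{2,\Omega_1}\dd s$ gives the $Ch^2\qty(\tau\norm{c_{10}}_{2,\Omega_1}^2+\tau\norm{\partial c_1/\partial t}_{0;2,\Omega_1}^2)$ term; the integral remainder $\omega^k=\tau^{-1}\int_{t_{k-1}}^{t_k}\qty(s-t_{k-1})\frac{\partial^2 c_1}{\partial t^2}(s)\dd s$ with a Cauchy--Schwarz in time gives the $C\tau^2\norm{\partial^2 c_1/\partial t^2}_{0,k;0,\Omega}^2$ term; and, since $P_h$ commutes with time differentiation, $\bar\partial\rho_1^k=\tau^{-1}\int_{t_{k-1}}^{t_k}\pdv{\rho_1}{t}(s)\dd s$ with \eqref{eq:DFN_c1_projection_h_error} applied to $\pdv{\rho_1}{t}=\pdv{c_1}{t}-P_h\pdv{c_1}{t}$ gives the $Ch^2\norm{\partial c_1/\partial t}_{0,k;2,\Omega_1}^2$ term.

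The only genuinely delicate contribution is the nonlinear pairing $\int_{\Omega_2}a_1\qty(J^k-J_h^k)\theta_1^k$, and this is where \Cref{asp:nonlinear_function} and \Cref{asp:DFN_prior_bound} enter. The a priori bounds confine $c_1^k,c_{1h}^k$ to $[1/M,M]$, the ratios $\bar c_2^k/c_{2,\max}$ and $\bar c_{2h}^k/c_{2,\max}$ to $[1/N,1-1/N]$ (hence $\bar c_2^k,\bar c_{2h}^k$ to a compact subset of $\qty(0,c_{2,\mathrm{max},m})$), and $\phi_i^k,\phi_{ih}^k$ to an $L^\infty$ ball of radius $L$ (hence $U_m(\bar c_2^k),U_m(\bar c_{2h}^k)$ and the overpotentials $\eta^k,\eta_h^k$ are uniformly bounded); on these compact sets the $C^1$ functions $J_m$ and the $C^2$ functions $U_m$ are Lipschitz, uniformly in $k$, so that pointwise on each $\Omega_m$
\[
  \abs{J^k-J_h^k}\le C\qty(\abs{c_1^k-c_{1h}^k}+\abs{\bar c_2^k-\bar c_{2h}^k}+\abs{\phi_1^k-\phi_{1h}^k}+\abs{\phi_2^k-\phi_{2h}^k}),
\]
where I also used $\abs{U^k-U_h^k}\le C\abs{\bar c_2^k-\bar c_{2h}^k}$. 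Taking $L^2(\Omega_2)$ norms, bounding $\norm{c_1^k-c_{1h}^k}_{0,\Omega_2}\le\norm{\rho_1^k}_{0,\Omega}+\norm{\theta_1^k}_{0,\Omega}$, and then applying a \emph{weighted} Young's inequality $ab\le\tfrac1{4\epsilon}a^2+\epsilon b^2$ in which the $\bar c_2$-- and $\phi_i$--factors receive the small weight $\epsilon\tau$ while $\norm{\theta_1^k}_{0,\Omega}^2$ (and the already $O(h^2)$ quantity $\norm{\rho_1^k}_{0,\Omega}^2$) receive the weight $C(\epsilon)\tau$, produces precisely the last line of \eqref{eq:DFN_fem_estimation_theta_c1}; collecting all contributions and using $\bar a_1<\infty$ completes the proof. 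I expect this final step --- deriving the uniform Lipschitz bound for $J$ from the standing assumptions and then carrying out the $\epsilon$--versus--$C(\epsilon)$ bookkeeping so that the coupling errors $\norm{\bar c_2^k-\bar c_{2h}^k}_{0,\Omega_2}$, $\norm{\phi_1^k-\phi_{1h}^k}_{0,\Omega}$, $\norm{\phi_2^k-\phi_{2h}^k}_{0,\Omega_2}$ survive only with the small factor $\epsilon$ (so they can later be absorbed when this estimate is combined, via a discrete Gronwall argument, with the analogous estimates for $\phi_1,\phi_2,c_2$) --- to be the principal obstacle; everything else is the standard backward Euler energy estimate.
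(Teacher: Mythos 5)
Your proposal is correct and follows essentially the same route as the paper's proof: the Taylor expansion of $\partial_t c_1(t_k)$ producing the truncation term $\omega^k = \tau^{-1}\int_{t_{k-1}}^{t_k}(s-t_{k-1})\partial^2_t c_1(s)\,\dd s$, the error identity obtained by subtracting \eqref{eq:DFN_fem_c1} from \eqref{eq:DFN_weak_c1} with the elliptic projection property of $P_h$ converting $\int_\Omega k_1\nabla\rho_1^k\cdot\nabla v_h$ into $-\int_\Omega\rho_1^k v_h$, the test choice $v_h=\theta_1^k$ with the standard identity $2(a-b,a)=\|a\|^2-\|b\|^2+\|a-b\|^2$, the bound $\|\bar\partial\rho_1^k\|_{0,\Omega}\le Ch\tau^{1/2}\|\partial_t c_1\|_{0,k;2,\Omega_1}$ from commuting $P_h$ with time differentiation, the locally uniform Lipschitz estimate for $J$ implied by Hypotheses~\ref{asp:nonlinear_function}--\ref{asp:DFN_prior_bound}, and the $\epsilon$-weighted Young's inequality that isolates the coupling terms with a small prefactor. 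The bookkeeping you flag as the ``principal obstacle'' is indeed exactly what the paper carries out in \eqref{eq:DFN_theta1_estimation_raw}--\eqref{eq:DFN_estimation_J}, and your accounting reproduces the same right-hand side.
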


    \begin{proof}
        The time derivative of $c_1$ at $t_k$ is 
        \begin{displaymath}
            \frac{\partial c_1}{\partial t}\qty(t_k) = \frac{c_1\qty(t_k)-c_1\left(t_{k-1}\right)}{\tau}+\frac{1}{\tau} \int_{t_{k-1}}^{t_k}\left(s-t_{k-1}\right) \frac{\partial^2 c_1}{\partial t^2}(s)\, \dd s.
        \end{displaymath}
        This leads to the decomposition of the temporal difference error as
        \begin{equation}\label{eq:DFN_tdiff_c1}
                \frac{\partial c_1}{\partial t}\qty(t_k)-\frac{c_{1 h}^k-c_{1h}^{k-1}}{\tau}
                =  \frac{\rho_1^k-\rho_1^{k-1}}{\tau}+ \frac{\theta_1^k-\theta_1^{k-1}}{\tau}+\int_{t_{k-1}}^{t_k}\left(\frac{s-t_{k-1}}{\tau}\right) \frac{\partial^2 c_1}{\partial t^2}(s)\, \dd s.
        \end{equation}
        Hence taking $\varphi= v_h$ in \eqref{eq:DFN_weak_c1} at $t=t_k$ and subtracting \eqref{eq:DFN_fem_c1}, it follows
        \begin{equation*}
            \begin{aligned}
                &\begin{aligned}
                & \int_{\Omega}\left(\theta_1^k-\theta_1^{k-1}\right) v_h\, \dd x +\tau \int_{\Omega} k_1 \nabla \theta_1^k \cdot \nabla v_h\, \dd x= \tau \int_{\Omega_2} a_1\left(J^k-J_h^k\right) v_h\, \dd x \dd t \\ 
                & - \int_{\Omega}\left(\rho_1^k-\rho_1^{k-1}\right) v_h\, \dd x - \int_{\Omega}\left(\int_{t_{k-1}}^{t_k}\left(s-t_{k-1}\right) \frac{\partial^2 c_1}{\partial t^2}(s)\, \dd s\right) v_h \, \dd x + \tau \int_{\Omega} \rho_1^k \cdot  v_h\, \dd x.
                \end{aligned}
            \end{aligned}
        \end{equation*}
        Next, by setting $v_h = \theta_1^k$, we have
        \begin{equation} \label{eq:DFN_theta1_estimation_raw}
            \begin{aligned}
                &\frac{1}{2}\left\|\theta_1^k\right\|_{0,\Omega}^2 - \frac{1}{2}\left\|\theta_1^{k-1}\right\|_{0,\Omega}^2 + \frac{1}{2}\left\|\theta_1^k-\theta_1^{k-1}\right\|_{0,\Omega}^2 +  \underline{k_1} \tau\left\|\nabla \theta_1^k\right\|_{0,\Omega}^2 \\ 
                \le & C\tau \left\|J^k-J_h^k\right\|_{0,\Omega_2} \norm{ \theta_1^k}_{0,\Omega}  +  C \left\|\rho_1^k-\rho_1^{k-1}\right\|_{0,\Omega} \norm{ \theta_1^k}_{0,\Omega} \\ 
                & + \tau^{\frac{3}{2}} \norm{\frac{\partial^2 c_1}{\partial t^2}}_{0,k;0,\Omega} \norm{\theta_1^k}_{0,\Omega}+ 
                \tau \norm{\rho_1^k}_{0,\Omega}\norm{\theta_1^k}_{0,\Omega}.
            \end{aligned}
        \end{equation}     
        Since Hypotheses~\ref{asp:nonlinear_function} and \ref{asp:DFN_prior_bound} hold, 
        \begin{equation} \label{eq:DFN_estimation_J}
            \begin{aligned}
                & \left\|J^k-J_h^k\right\|_{0,\Omega_2}  \le C\qty(\left\|c_1^k-c_{1h}^k\right\|_{0,\Omega}+\left\|\bar c_2^k - \bar c_{2h}^k\right\|_{0,\Omega_2}+\left\|\phi_1^k - \phi_{1h}^k\right\|_{0,\Omega} + \left\|\phi_2^k-\phi_{2 h}^k\right\|_{0,\Omega_2}) \\ 
                &\le  C \qty(\left\|\rho_1^k\right\|_{0,\Omega} + \norm{ \theta_1^k}_{0,\Omega} +\left\|\bar c_2^k - \bar c_{2h}^k\right\|_{0,\Omega_2}+\left\|\phi_1^k - \phi_{1h}^k\right\|_{0,\Omega} + \left\|\phi_2^k-\phi_{2 h}^k\right\|_{0,\Omega_2}).
            \end{aligned}
        \end{equation}
        We write 
        \begin{equation*}
            \rho_1^k-\rho_1^{k-1} = \qty(I-P_h)\qty(c_1^k -c_1^{k-1}) = \qty(I-P_h)\int_{t_{k-1}}^{t_k} \pdv{c_1}{t}\qty(s)\dd s,
        \end{equation*}
        and obtain
        \begin{equation*}
                \left\|\rho_1^k-\rho_1^{k-1}\right\|_{0,\Omega} 
                \le Ch\norm{ \int_{t_{k-1}}^{t_k} \pdv{c_1}{t}\qty(s)\dd s}_{2,\Omega_1} 
                \le Ch \tau^\frac{1}{2}\norm{ \pdv{c_1}{t}}_{0,k;2,\Omega_1}.
        \end{equation*}
        By virtue of \eqref{eq:DFN_fem_c1_rho_err}, 
        \begin{equation*}
            \norm{\rho_{1}^k}_{0,\Omega} \le Ch\norm{c_1(t_k)}_{2,\Omega_1} \le Ch \qty(\norm{c_{10}}_{{2,\Omega_1}}+\int_0^{t_k}\norm{ \pdv{c_1}{t}\qty(s)}_{{2,\Omega_1}}\dd s).
        \end{equation*}
        Together with the above estimates, Young's inequality implies that there exists $\epsilon > 0$, such that
        \begin{equation*}
            \begin{aligned}
                & \left\|\theta_1^k\right\|_{0,\Omega}^2-\left\|\theta_1^{k-1}\right\|_{0,\Omega}^2+\left\|\theta_1^k-\theta_1^{k-1}\right\|_{0,\Omega}^2+2 \underline{k_1} \tau\left\|\nabla \theta^k\right\|_{0,\Omega}^2 \\
                \leq & C h^2\left(\tau\norm{c_{10}}_{{2,\Omega_1}}^2+\tau\left\|\frac{\partial c_1}{\partial t}\right\|_{0; 2,\Omega_1}^2\right) + C h^2\left\|\frac{\partial c_1}{\partial t}\right\|_{0, k; 2,\Omega_1}^2  + C \tau^2\left\|\frac{\partial^2 c_1}{\partial t^2}\right\|_{0, k; 0,\Omega}^2 \\
                & + C\qty(\epsilon) \tau\left\|\theta_1^k\right\|_{0,\Omega}^2 + \epsilon \tau\qty(\left\|\bar c_2^k-\bar c_{2 h}^{k}\right\|_{0,\Omega_2}^2+\left\|\phi_1^k-\phi_{1h}^k\right\|_{0,\Omega}^2+\left\|\phi_2^k-\phi_{2 h}^k\right\|_{0,\Omega_2}^2).
            \end{aligned}
        \end{equation*}
    \end{proof}
    \subsection{Error estimation for $c_2$ }
    Thanks to the projection operator $P_{h\Delta r}$, the finite element error for $c_2$ can be similarly decomposed as follows:
    \begin{equation}\label{eq:DFN_fem_c2_err_decomp}
        c_2\qty(t)-c_{2h\Delta r}\qty(t)=\qty(c_2\qty(t)-P_{h\Delta r} c_2\qty(t))+\qty(P_{h\Delta r} c_2\qty(t) -c_{2h \Delta r}\qty(t))=:\rho_{2}\qty(t) +\theta_{2}\qty(t),
    \end{equation}
    and we have, setting $X=H^1\left(\Omega_2; H_r^1\left(0, R_\mathrm{s}\qty(\cdot)\right)\right) \cap L^2\left(\Omega_2 ; H_r^2\left(0, R_\mathrm{s}\qty(\cdot)\right)\right) $, 
    \begin{equation} \label{eq:DFN_fem_c2_rho_err}
        \begin{aligned}
            \norm{\rho_{2}\qty(t)}_{0,\Omega;q,r}  \le & Ch\norm{c_2(t)}_{1,\Omega_2;1,r} + C\qty(\Delta r)^{2-q}\norm{c_2(t)}_{0,\Omega_2;q,r} \\ 
            \le &C\qty(h + \qty(\Delta r)^{2-q} )\qty(\norm{c_{20}}_{X}+\int_0^t\norm{ \pdv{c_2}{t}\qty(s)}_{X}\dd s ),\quad q=0,1.
        \end{aligned}
    \end{equation}
    We proceed to give an estimate for $\theta_{2}\qty(t)$.
    \begin{lemma}\label{lemma:DFN_fem_estimation_theta_c2}
       There exists an arbitrarily small number $\epsilon \in \qty(0,1)$ that does not depend on $\tau$, $h$ and $\Delta r$, such that for each $k = 1,\dots,K$, 
        \begin{equation}\label{eq:DFN_fem_estimation_theta_c2}
            \begin{aligned}
            &\left\|\theta_2^k\right\|_{0,\Omega_2 ; 0, r}^2-\left\|\theta_2^{k-1}\right\|_{0,\Omega_2 ; 0, r}^2+\left\|\theta_2^k-\theta_2^{k-1}\right\|_{0,\Omega_2 ; 0, r}^2 + 2\underline{k_2} \tau\left\|\frac{\partial \theta_2^k}{\partial r}\right\|_{0,\Omega_2 ; 0, r}^2  \\
            \leq  &C \qty(h^2 + (\Delta r)^4)\qty(\tau\norm{c_{20}}_{X}^2+\tau\norm{ \pdv{c_2}{t}}_{{0;{X}}}^2 + \norm{ \pdv{c_2}{t}}^2_{0,k;X}) + C\tau^2 \left\|\frac{\partial^2 c_2}{\partial t^2}\right\|_{0, k; 0,\Omega_2;0,r}^2  \\
            + &C \epsilon \tau\qty(\left\|c_1^k-c_{1h}^k\right\|_{0,\Omega}^2+\left\|\phi_1^k-\phi_{1h}^k\right\|_{0,\Omega}^2+\left\|\phi_2^k-\phi_{2h}^k\right\|_{0,\Omega_2}^2)  
            + C\qty(\epsilon) \tau\left\|\theta_2^k\right\|_{0,\Omega_2 ;0, r}^2.
            \end{aligned}
        \end{equation}
    \end{lemma}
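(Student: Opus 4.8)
The strategy parallels the proof of \cref{lemma:DFN_fem_estimation_theta_c1}, now carried out in the weighted radial setting with the pseudo-($N$+$1$)-dimensional projection $P_{h\Delta r}$. First I would write, for each $k$,
\[
\pdv{c_2}{t}(t_k)-\frac{c_{2h\Delta r}^{k}-c_{2h\Delta r}^{k-1}}{\tau}
=\frac{\rho_2^{k}-\rho_2^{k-1}}{\tau}+\frac{\theta_2^{k}-\theta_2^{k-1}}{\tau}
+\int_{t_{k-1}}^{t_k}\frac{s-t_{k-1}}{\tau}\,\frac{\partial^2 c_2}{\partial t^2}(s)\,\dd s,
\]
using the Taylor identity for $\pdv{c_2}{t}(t_k)$ together with the decomposition \eqref{eq:DFN_fem_c2_err_decomp}. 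Choosing $\psi=\theta_2^{k}\in V_{h\Delta r}(\bar\Omega_{2r})$ in \eqref{eq:DFN_weak_c2} at $t=t_k$, subtracting \eqref{eq:DFN_fem_c2} and substituting the above, I obtain an identity in which the diffusion contribution of $\rho_2^{k}$, namely $\int_{\Omega_2}\int_0^{R_\mathrm{s}}k_2\,\pdv{\rho_2^k}{r}\,\pdv{\theta_2^k}{r}\,r^2\,\dd r\dd x$, is converted by the defining property \eqref{eq:DFN_c2_projection_hr} of $P_{h\Delta r}$ into the lower-order term $-\int_{\Omega_2}\int_0^{R_\mathrm{s}}\rho_2^k\theta_2^k\,r^2\,\dd r\dd x$.

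Applying the identity $2(a-b)a=a^2-b^2+(a-b)^2$ in $\norm{\cdot}_{0,\Omega_2;0,r}$ and the coercivity $\int_{\Omega_2}\int_0^{R_\mathrm{s}}k_2\left(\pdv{\theta_2^k}{r}\right)^2 r^2\,\dd r\dd x\ge\underline{k_2}\norm{\pdv{\theta_2^k}{r}}_{0,\Omega_2;0,r}^2$, this yields a ``raw'' inequality whose right-hand side pairs $\theta_2^k$ against four quantities: $\rho_2^{k}-\rho_2^{k-1}$, the second-order Taylor remainder, $\tau\rho_2^k$, and the surface term $-\tau\int_{\Omega_2}\frac{R_\mathrm{s}^2}{F}(J^k-J_h^k)\theta_2^k(\cdot,R_\mathrm{s}(\cdot))\,\dd x$. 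The first three are routine: writing $\rho_2^{k}-\rho_2^{k-1}=(I-P_{h\Delta r})\int_{t_{k-1}}^{t_k}\pdv{c_2}{t}(s)\,\dd s$ and invoking \cref{lemma:DFN_projection_hdr} gives $\norm{\rho_2^{k}-\rho_2^{k-1}}_{0,\Omega_2;0,r}\le C(h+(\Delta r)^2)\tau^{1/2}\norm{\pdv{c_2}{t}}_{0,k;X}$; a Cauchy--Schwarz estimate bounds the remainder's norm by $\tau^{1/2}\norm{\frac{\partial^2 c_2}{\partial t^2}}_{0,k;0,\Omega_2;0,r}$; and \eqref{eq:DFN_fem_c2_rho_err} controls $\norm{\rho_2^k}_{0,\Omega_2;0,r}$. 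Young's inequality then produces the first two lines of \eqref{eq:DFN_fem_estimation_theta_c2}, up to a leftover $C\tau\norm{\theta_2^k}_{0,\Omega_2;0,r}^2$.

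The essential step is the surface term. Since $R_\mathrm{s}^2/F$ is bounded and, by \cref{asp:nonlinear_function,asp:DFN_prior_bound}, $J$ is Lipschitz in its arguments over the relevant range, I would bound it by $C\tau\norm{J^k-J_h^k}_{0,\Omega_2}\norm{\theta_2^k(\cdot,R_\mathrm{s}(\cdot))}_{0,\Omega_2}$ with
\[
\norm{J^k-J_h^k}_{0,\Omega_2}\le C\Big(\norm{c_1^k-c_{1h}^k}_{0,\Omega}+\norm{\bar c_2^k-\bar c_{2h}^k}_{0,\Omega_2}+\norm{\phi_1^k-\phi_{1h}^k}_{0,\Omega}+\norm{\phi_2^k-\phi_{2h}^k}_{0,\Omega_2}\Big),
\]
and then split $\norm{\bar c_2^k-\bar c_{2h}^k}_{0,\Omega_2}\le\norm{\rho_2^k(\cdot,R_\mathrm{s}(\cdot))}_{0,\Omega_2}+\norm{\theta_2^k(\cdot,R_\mathrm{s}(\cdot))}_{0,\Omega_2}$, estimating the first summand by \cref{lemma:DFN_projection_hdr_surf}. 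The decisive tool is a radial trace inequality of the form $\norm{v(\cdot,R_\mathrm{s}(\cdot))}_{0,\Omega_2}^2\le C\,\norm{v}_{0,\Omega_2;0,r}\big(\norm{v}_{0,\Omega_2;0,r}+\norm{\pdv{v}{r}}_{0,\Omega_2;0,r}\big)$ for $v\in L^2(\Omega_2;H_r^1(0,R_\mathrm{s}(\cdot)))$ (proved pointwise in $x$ by the one-dimensional trace theorem, with a cutoff near $r=0$ to avoid the degeneracy of the weight $r^2$, then integrated in $x$). Applied to $\theta_2^k$, together with Young's inequality, it lets me (a) absorb the resulting fraction of $\tau\norm{\pdv{\theta_2^k}{r}}_{0,\Omega_2;0,r}^2$ into the coercive term on the left; (b) fold the $L^2_r$-part of $\norm{\theta_2^k(\cdot,R_\mathrm{s}(\cdot))}_{0,\Omega_2}^2$ into $C(\epsilon)\tau\norm{\theta_2^k}_{0,\Omega_2;0,r}^2$; (c) give the couplings $\norm{c_1^k-c_{1h}^k}_{0,\Omega}$, $\norm{\phi_1^k-\phi_{1h}^k}_{0,\Omega}$, $\norm{\phi_2^k-\phi_{2h}^k}_{0,\Omega_2}$ the small prefactor $\epsilon\tau$; and (d) keep an $\epsilon$-independent constant on $\norm{\rho_2^k(\cdot,R_\mathrm{s}(\cdot))}_{0,\Omega_2}^2$, which \cref{lemma:DFN_projection_hdr_surf} then converts into a $C(h^2+(\Delta r)^4)(\norm{c_{20}}_X^2+\norm{\pdv{c_2}{t}}_{0;X}^2)$ contribution. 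Collecting all terms gives \eqref{eq:DFN_fem_estimation_theta_c2}.

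The main obstacle is precisely this surface term: the quantities $\norm{\theta_2^k(\cdot,R_\mathrm{s}(\cdot))}_{0,\Omega_2}$ and $\norm{\bar c_2^k-\bar c_{2h}^k}_{0,\Omega_2}$ are \emph{not} controlled by the weighted $L^2_r$ norm of $\theta_2^k$ alone, so one must pass through the radial trace inequality, and then balance the various Young parameters so that only a controlled fraction of $\tau\norm{\pdv{\theta_2^k}{r}}_{0,\Omega_2;0,r}^2$ is consumed, the mesh-error terms keep constants independent of $\epsilon$, and the remaining field errors appear with the factor $\epsilon$ needed for the later discrete Gr\"onwall argument that combines the four per-variable estimates. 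Apart from this, the computation is a direct transcription of the proof of \cref{lemma:DFN_fem_estimation_theta_c1}.
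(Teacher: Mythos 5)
Your proposal is correct and follows essentially the same path as the paper: the same temporal-difference decomposition, the same use of the defining property of $P_{h\Delta r}$ to absorb the diffusion part of $\rho_2^k$, the same splitting $\bar c_2^k-\bar c_{2h}^k=\bar\rho_2^k+\bar\theta_2^k$ with \cref{lemma:DFN_projection_hdr_surf} on the first piece and a radial trace inequality on the second, and the same Young balancing to leave a small $\epsilon$ in front of the coupled-field errors while absorbing a fraction of $\tau\norm{\partial_r\theta_2^k}_{0,\Omega_2;0,r}^2$ into the coercive term. The multiplicative trace inequality you state is (via Young) equivalent to the additive form the paper invokes as \cref{prop:DFN_radial_surface}, so no new idea is needed there.
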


    \begin{proof}
        Similar to \eqref{eq:DFN_tdiff_c1}, the error of temporal difference can also be decomposed as 
        \begin{equation*}
            \frac{\partial c_2}{\partial t}\qty(t_k)-\frac{c_{2 h\Delta r}^k-c_{2h\Delta r}^{k-1}}{\tau}
            = \frac{\rho_2^k-\rho_2^{k-1}}{\tau} + \frac{\theta_2^k-\theta_2^{k-1}}{\tau}+\int_{t_{k-1}}^{t_k}\left(\frac{s-t_{k-1}}{\tau}\right) \frac{\partial^2 c_2}{\partial t^2}(s)\, \dd s.
        \end{equation*}
        Subtracting \eqref{eq:DFN_fem_c2} from \eqref{eq:DFN_weak_c2} at $t=t_k$, we obtain
        \begin{equation}\label{eq:DFN_theta2_estimation_raw}
            \begin{aligned}
            & \int_{\Omega_2} \int_0^{R_\mathrm{s}\qty(x)}\left(\theta_2^k-\theta_2^{k-1}\right) v_{h \Delta r} r^2 \,\dd r \dd x+\tau \int_{\Omega_2} \int_0^{R_\mathrm{s}(x)} k_2 \frac{\partial \theta_2^k}{\partial r} \frac{\partial v_{h \Delta r}}{\partial r} r^2 \,\dd r \dd x \\
            = & - \int_{\Omega_2}\int_0^{R_\mathrm{s}\qty(x)}\left(\int_{t_{k-1}}^{t_k}\left(s-t_{k-1}\right) \frac{\partial^2 c_2}{\partial t^2}(s)\, \dd s\right) v_{h \Delta r} r^2 \,\dd r\dd x \dd t \\
            & -\int_{\Omega_2} \int_0^{R_\mathrm{s}\qty(x)}\left(\rho_2^k-\rho_2^{k-1}\right) v_{h \Delta r} r^2 \,\dd r \dd x+\tau \int_{\Omega_2}\int_0^{R_\mathrm{s}(x)}  \rho_2^k v_{h \Delta r} r^2 \,\dd r \dd x  \\
            & -\tau \int_{\Omega_2} \frac{R_\mathrm{s}^2}{F}\left(J^k-J_h^k\right) \bar v_{h} \, \dd x, 
            \end{aligned}
        \end{equation}
        where $\bar v_{h} := v_{h \Delta r}\left(x, R_\text{s}(x)\right)$  and the penultimate term is due to \eqref{eq:DFN_c2_projection_hr}.

        We write 
        \begin{equation*}
            \rho_2^k-\rho_2^{k-1} = \qty(I-P_{h\Delta r})\qty(c_2^k -c_2^{k-1}) = \qty(I-P_{h\Delta r})\int_{t_{k-1}}^{t_k} \pdv{c_2}{t}\qty(s)\dd s,
        \end{equation*}
        and obtain
        \begin{equation*}
                \left\|\rho_2^k-\rho_2^{k-1}\right\|_{0,\Omega_2;0,r} 
                \le C\qty(h+\qty(\Delta r)^2) \tau^\frac{1}{2}\norm{ \pdv{c_2}{t}}_{0,k;X}.
        \end{equation*}
        For the last term, since $\norm{\bar{c}_2^k - \bar{c}_{2h}^k}_{0,\Omega_2} \le \norm{\bar \rho_2^k}_{0,\Omega_2} + \norm{\bar \theta_2^k}_{0,\Omega_2}$, it follows from \eqref{eq:DFN_estimation_J} that               
        \begin{multline}
            \left\|J^k-J_h^k\right\|_{0,\Omega_2} \le  C\left( \left\|c_1^k - c_{1h}^k\right\|_{0,\Omega} + \norm{\bar \rho_2^k}_{0,\Omega_2}  +\norm{\bar \theta_2^k}_{0,\Omega_2}\right. \\  \left.  +\left\|\phi_1^k - \phi_{1h}^k\right\|_{0,\Omega} + \left\|\phi_2^k-\phi_{2 h}^k\right\|_{0,\Omega_2}\right).
        \end{multline}
        By virtue of Lemma~\ref{lemma:DFN_projection_hdr_surf},
        \begin{equation} \label{eq:DFN_fem_c2_rho_surf}
            \begin{aligned}
                \norm{\bar \rho_2^k}_{0,\Omega_2}  \le &  Ch\norm{c_2(t_k)}_{1,\Omega_2;1,r} + C\qty(\Delta r)^2\norm{c_2(t_k)}_{0,\Omega_2;2,r} \\ 
                \le &C\qty(h + \qty(\Delta r)^2)\qty(\norm{c_{20}}_{X}+\int_0^{t_k}\norm{ \pdv{c_2}{t}\qty(s)}_{{{X}}}\dd s),
            \end{aligned}
        \end{equation}
        and due to Proposition~\ref{prop:DFN_radial_surface}, there is some $\epsilon_1 > 0$ to be determined later,
        \begin{equation}\label{eq:DFN_theta2_surface_estimation} 
            \left\|\bar \theta_{2}^k\right\|_{0,\Omega_2}^2 \leq \epsilon_1\left\|\frac{\partial \theta_{2}^k}{\partial r}\right\|_{0,\Omega_2 ; 0,r}^2 + C(\epsilon_1)\left\|\theta_{2}^k\right\|_{0,\Omega_2 ;0,r}^2.
        \end{equation}
        Hence, Young's inequality yields for any $\epsilon_2 \in \qty(0,1)$,
        \begin{equation}\label{eq:DFN_theta2_last_term_estimation} 
            \begin{aligned}
            \int_{t_{k-1}}^{t_k} \int_{\Omega_2} &\frac{R_\mathrm{s}^2}{F}\left(J^k-J_h^k\right) \bar v_{h} \, \dd x \dd s \le \epsilon_2 \tau \left\|J^k-J_h^k\right\|_{0,\Omega_2}^2  + C\qty(\epsilon_2) \tau \norm{\bar v_{h}}_{0,\Omega_2}^2  \\ 
            \le & C\tau\qty(h^2 + \qty(\Delta r)^4)\qty(\norm{c_{20}}_{X}^2+\norm{ \pdv{c_2}{t}}_{{0;{X}}}^2) +C\epsilon_1\tau\norm{\pdv{\theta_2^k}{r}}_{0,\Omega_2;0,r}^2 \\ 
            &+C\epsilon_2\tau\qty( \left\|c_1^k - c_{1h}^k\right\|_{0,\Omega}^2 + \left\|\phi_1^k - \phi_{1h}^k\right\|_{0,\Omega}^2 + \left\|\phi_2^k-\phi_{2 h}^k\right\|_{0,\Omega_2}^2)  \\ 
            &+ C\qty(\epsilon_1)\tau\norm{{\theta_2^k}}_{0,\Omega_2;0,r}^2  +  C\qty(\epsilon_2) \tau \norm{\bar v_{h}}_{0,\Omega_2}^2.
            \end{aligned}
        \end{equation}
        Finally, choosing $v_{h \Delta r}=\theta_{2}^k\in V_{h\Delta r}(\bar \Omega_{2r})$ in \eqref{eq:DFN_theta2_estimation_raw}, and applying the Cauchy-Schwarz inequality along with \eqref{eq:DFN_fem_c2_rho_err}, \eqref{eq:DFN_theta2_surface_estimation} and \eqref{eq:DFN_theta2_last_term_estimation}, we have
        \begin{displaymath}
            \begin{aligned}
            &\left\|\theta_2^k\right\|_{0,\Omega_2 ; 0, r}^2-\left\|\theta_2^{k-1}\right\|_{0,\Omega_2 ; 0, r}^2+\left\|\theta_2^k-\theta_2^{k-1}\right\|_{0,\Omega_2 ; 0, r}^2+2 \underline{k_2} \tau\left\|\frac{\partial \theta_2^k}{\partial r}\right\|_{0,\Omega_2 ; 0, r}^2 \\
            \leq & C\qty(h^2 + (\Delta r)^4)\norm{ \pdv{c_2}{t}}^2_{0,k;X}
            +C\tau \qty(h^2 + (\Delta r)^4)\qty(\norm{c_{20}}_{X}^2+\norm{ \pdv{c_2}{t}}_{{0;{X}}}^2)   \\ 
            &  + C \epsilon_2 \tau\qty(\left\|c_1^k-c_{1h}^k\right\|_{0,\Omega}^2+\left\|\phi_1^k-\phi_{1h}^k\right\|_{0,\Omega}^2+\left\|\phi_2^k-\phi_{2h}^k\right\|_{0,\Omega_2}^2) \\ 
            &+C\qty(\epsilon_1,\epsilon_2) \tau\left\|\theta_2^k\right\|_{0,\Omega_2 ;0, r}^2 + C\qty(\epsilon_2) \epsilon_1 \tau\left\|\frac{\partial \theta_2^k}{\partial r}\right\|_{0,\Omega_2 ; 0, r}^2 + C\tau^2\left\|\frac{\partial^2 c_2}{\partial t^2}\right\|_{0, k; 0,\Omega_2;0,r}^2 .
            \end{aligned}
        \end{displaymath}
        Estimation \eqref{eq:DFN_fem_estimation_theta_c2} follows by choosing $\epsilon_1$ sufficiently small.

    \end{proof}
    
    \subsection{Error estimation for the fully discrete problems}
        To this end, we still require error estimation for $\phi_1$ and $\phi_2$, which will then be integrated with the previously derived results to address the fully coupled problem.
        The error estimations for $\phi_1$ and $\phi_2$ have already been provided in \cite{xu2024DFNsemiFEM}, and we restate the results here for completeness:
        \begin{proposition}\label{prop:DFN_fem_estimation_Phi}
            There is a constant $C$ that does not depend on $\tau$, $h$ and $\Delta r$, such that
            \begin{multline*}
                \left\|\phi_1\qty(t_k)-\phi_{1 h}^k\right\|_{1,\Omega}^2 +\left\|\phi_2\qty(t_k)-\phi_{2 h}^k\right\|_{1,\Omega_2}^2  \\
                \le Ch^2\left(\left\|\phi_1\qty(t_k)\right\|_{2,\Omega_1}^2+\left\|\phi_2\qty(t_k)\right\|_{2,\Omega_2}^2\right)  \\ 
                + C\qty( \norm{c_1(t_k) - c_{1h}^k}_{1,\Omega}^2 + \norm{\bar c_2(t_k) - \bar c_{2h}^k}_{0,\Omega_2}^2 ).
            \end{multline*}
        \end{proposition}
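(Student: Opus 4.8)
The plan is to treat \eqref{eq:DFN_weak_phi1}--\eqref{eq:DFN_fem_phi1} and \eqref{eq:DFN_weak_phi2}--\eqref{eq:DFN_fem_phi2} at time $t_k$ as a coupled pair of quasilinear elliptic problems, in the spirit of \cite{xu2024DFNsemiFEM}; the decisive structural ingredient is the strict monotonicity $\partial_\eta J_m\ge\alpha>0$ of Hypothesis~\ref{asp:nonlinear_function}. First I would split the errors through a (mean-value preserving) Scott--Zhang or Lagrange quasi-interpolation $\Pi_h$ onto $W_h(\bar\Omega)$, resp.\ $V_h^{(1)}(\bar\Omega_2)$: writing $e_i^\phi:=\phi_i(t_k)-\phi_{ih}^k=\rho_i^\phi+\theta_i^\phi$ with $\rho_i^\phi:=\phi_i(t_k)-\Pi_h\phi_i(t_k)$ and $\theta_i^\phi:=\Pi_h\phi_i(t_k)-\phi_{ih}^k$, one has $\norm{\rho_i^\phi}_{1,\Omega_i}\le Ch\norm{\phi_i(t_k)}_{2,\Omega_i}$ (this uses the $H^2_{\mathrm{pw}}$ regularity of Hypothesis~\ref{asp:DFN_fem_regularity} and the mesh matching at the subdomain interfaces), and, since $\phi_1(t_k),\phi_{1h}^k$ and $\phi_2(t_k),\phi_{2h}^k$ are normalized to zero mean, $\theta_1^\phi$ and $\theta_2^\phi$ have zero mean, so the Poincaré--Wirtinger inequality gives $\norm{\theta_i^\phi}_{0}\le C\norm{\nabla\theta_i^\phi}_{0}$.

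Next I would form the error equations: subtracting the discrete equations from the continuous ones at $t_k$ (legitimate against arbitrary test functions in $H^1(\Omega_2)$, resp.\ $V_h^{(1)}(\bar\Omega_2)$, thanks to \eqref{eq:intro_compatibility_condition} and \eqref{eq:fem_discrete_conservation}), testing the $\phi_1$-equation with $\theta_1^\phi\in W_h$ and the $\phi_2$-equation with $\theta_2^\phi$, isolating the coercive contributions $\int_\Omega\kappa_{1h}^k|\nabla\theta_1^\phi|^2\ge\underline{\kappa_1}\norm{\nabla\theta_1^\phi}_{0,\Omega}^2$ and $\int_{\Omega_2}\sigma|\nabla\theta_2^\phi|^2\ge\underline{\sigma}\norm{\nabla\theta_2^\phi}_{0,\Omega_2}^2$, and \emph{adding} the two identities, I obtain
\[
\underline{\kappa_1}\norm{\nabla\theta_1^\phi}_{0,\Omega}^2+\underline{\sigma}\norm{\nabla\theta_2^\phi}_{0,\Omega_2}^2\le\int_{\Omega_2}a_2\qty(J^k-J_h^k)\qty(\theta_1^\phi-\theta_2^\phi)\dd x+\mathcal{R},
\]
where $\mathcal{R}$ gathers the terms carrying $\rho_1^\phi,\rho_2^\phi$ and the coefficient differences $\kappa_1^k-\kappa_{1h}^k$ and $\kappa_2^k\nabla f(c_1^k)-\kappa_{2h}^k\nabla f(c_{1h}^k)$.

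The heart of the argument, and the step I expect to be the main obstacle, is the coupling integral. I would decompose $J^k-J_h^k=\big[J(c_1^k,\bar c_2^k,\eta^k)-J(c_{1h}^k,\bar c_{2h}^k,\eta^k)\big]+g\,(\eta^k-\eta_h^k)$, where $g$ is the mean value $\int_0^1\partial_\eta J\big(c_{1h}^k,\bar c_{2h}^k,\eta^k+s(\eta_h^k-\eta^k)\big)\dd s$ (taken on each subdomain $\Omega_m$ separately), so that $\alpha\le g\le\bar g$ by Hypotheses~\ref{asp:nonlinear_function} and \ref{asp:DFN_prior_bound}, while the first bracket is pointwise $\le C\qty(|c_1^k-c_{1h}^k|+|\bar c_2^k-\bar c_{2h}^k|)$. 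Since $\eta^k-\eta_h^k=e_2^\phi-e_1^\phi-(U^k-U_h^k)=(\rho_2^\phi-\rho_1^\phi)-(\theta_1^\phi-\theta_2^\phi)-(U^k-U_h^k)$ with $|U^k-U_h^k|\le C|\bar c_2^k-\bar c_{2h}^k|$, the $g\,(\eta^k-\eta_h^k)$ part contributes to $\int_{\Omega_2}a_2(J^k-J_h^k)(\theta_1^\phi-\theta_2^\phi)$, besides harmless $\rho$- and $\bar c_2$-terms, the \emph{single} genuinely quadratic term $-\int_{\Omega_2}a_2 g\,(\theta_1^\phi-\theta_2^\phi)^2\le0$, which may simply be discarded. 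This is exactly what the strict monotonicity of $J$ in $\eta$ buys: after this, every remaining term on the right is a product of one of $\nabla\theta_1^\phi,\nabla\theta_2^\phi,\theta_1^\phi,\theta_2^\phi$ with either an approximation/data quantity ($\rho_i^\phi$, $h\norm{\phi_i(t_k)}_{2}$, $\norm{c_1(t_k)-c_{1h}^k}_{1,\Omega}$, $\norm{\bar c_2(t_k)-\bar c_{2h}^k}_{0,\Omega_2}$) or a difference of nonlinear functions, the latter bounded by $C\qty(\norm{c_1(t_k)-c_{1h}^k}_{1,\Omega}+\norm{\bar c_2(t_k)-\bar c_{2h}^k}_{0,\Omega_2})$ via the local Lipschitz continuity of $\kappa_{im},f,U_m,J_m$ on the a priori ranges of Hypothesis~\ref{asp:DFN_prior_bound} and the $L^\infty$-bounds $\norm{\nabla\phi_1(t_k)}_{L^\infty},\norm{\nabla c_1(t_k)}_{L^\infty}\le L$.

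Finally I would use the Poincaré--Wirtinger inequality to turn the $\theta_i^\phi$ factors into $\nabla\theta_i^\phi$ and then Young's inequality with a small parameter to absorb all the resulting $\norm{\nabla\theta_i^\phi}_0^2$ contributions into the left-hand side; crucially, no smallness of the physical parameters is needed, since the only term quadratic in $\theta_1^\phi-\theta_2^\phi$ was the favourable one discarded above. This gives $\norm{\nabla\theta_1^\phi}_{0,\Omega}^2+\norm{\nabla\theta_2^\phi}_{0,\Omega_2}^2\le Ch^2\qty(\norm{\phi_1(t_k)}_{2,\Omega_1}^2+\norm{\phi_2(t_k)}_{2,\Omega_2}^2)+C\qty(\norm{c_1(t_k)-c_{1h}^k}_{1,\Omega}^2+\norm{\bar c_2(t_k)-\bar c_{2h}^k}_{0,\Omega_2}^2)$, and a last Poincaré--Wirtinger step together with $\norm{e_i^\phi}_{1,\Omega_i}\le\norm{\rho_i^\phi}_{1,\Omega_i}+\norm{\theta_i^\phi}_{1,\Omega_i}$ and $\norm{\rho_i^\phi}_{1,\Omega_i}\le Ch\norm{\phi_i(t_k)}_{2,\Omega_i}$ yields the claim. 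The secondary technical nuisance hidden inside $\mathcal{R}$ is the quasilinear term $\kappa_2\nabla f(c_1)$, whose discrete difference must be estimated in $L^2(\Omega)$ by $\norm{c_1(t_k)-c_{1h}^k}_{1,\Omega}$ rather than by the $L^2$ error alone — this is why the $c_1$-error enters the statement in the $H^1$-norm — and a clean treatment of it relies on the $L^\infty$-bounds for $\nabla\phi_1(t_k),\nabla c_1(t_k)$ and the Lipschitz behaviour of $f'$ and $\kappa_{2m}$ on the a priori ranges.
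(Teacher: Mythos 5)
Your overall architecture---subtracting the discrete equations from the continuous ones at $t_k$, testing with $\theta_1^\phi\in W_h(\bar\Omega)$ and $\theta_2^\phi\in V_h^{(1)}(\bar\Omega_2)$, adding the two identities, and exploiting $\partial_\eta J_m\ge\alpha>0$ from \Cref{asp:nonlinear_function} to tame the coupling integral $\int_{\Omega_2}a_2\qty(J^k-J_h^k)\qty(\theta_1^\phi-\theta_2^\phi)\dd x$---is the right one, and your Lipschitz treatment of the remaining nonlinearities under the a priori bounds of \Cref{asp:DFN_prior_bound} is sound, as is the observation that the quasilinear term $\kappa_2\nabla f(c_1)$ is what forces the $H^1$-norm of $c_1-c_{1h}$ into the final estimate. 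But there is a genuine gap. You assert that $\phi_2(t_k)$ and $\phi_{2h}^k$ ``are normalized to zero mean'' so that $\theta_2^\phi$ is zero-mean and Poincar\'e--Wirtinger applies. That is false in the paper's formulation: $\phi_2\in H^1(\Omega_2)$ and $\phi_{2h}^k\in V_h^{(1)}(\bar\Omega_2)$ carry no mean constraint---only $\phi_1\in H^1_*(\Omega)$ and $\phi_{1h}^k\in W_h(\bar\Omega)$ are gauged. Hence $\theta_2^\phi$ generically has non-zero mean, $\norm{\theta_2^\phi}_{0,\Omega_2}$ is not controlled by $\norm{\nabla\theta_2^\phi}_{0,\Omega_2}$, and as written your argument cannot close the loop on the $L^2(\Omega_2)$ part of the $H^1(\Omega_2)$-error of $\phi_2$ that the proposition claims.

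The closely related mistake is discarding the quadratic term $-\int_{\Omega_2}a_2 g\qty(\theta_1^\phi-\theta_2^\phi)^2$: it is not merely sign-friendly, it is the \emph{mechanism} by which $\norm{\theta_2^\phi}_{0,\Omega_2}$ is recovered. Keep it on the left so that the coercive part reads $\underline{\kappa_1}\norm{\nabla\theta_1^\phi}_{0,\Omega}^2+\underline\sigma\norm{\nabla\theta_2^\phi}_{0,\Omega_2}^2+\alpha\underline{a_2}\norm{\theta_1^\phi-\theta_2^\phi}_{0,\Omega_2}^2$. Then $\norm{\theta_2^\phi}_{0,\Omega_2}\le\norm{\theta_1^\phi-\theta_2^\phi}_{0,\Omega_2}+\norm{\theta_1^\phi}_{0,\Omega_2}\le\norm{\theta_1^\phi-\theta_2^\phi}_{0,\Omega_2}+C\norm{\nabla\theta_1^\phi}_{0,\Omega}$, where the last step uses Poincar\'e--Wirtinger for $\theta_1^\phi$ alone, which \emph{is} zero-mean. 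With this fix---and absorbing via Young's inequality into all three coercive pieces rather than only the two gradient terms---the rest of your outline (mean-value split of $J^k-J_h^k$, Lipschitz bounds for $\kappa_{im}$, $f$, $U_m$, $J_m$, the $L^\infty$-bounds on $\nabla\phi_1$ and $\nabla c_1$) does deliver the claimed estimate. For the record, the paper does not supply its own proof of \Cref{prop:DFN_fem_estimation_Phi} but cites \cite{xu2024DFNsemiFEM}, so your sketch has been assessed on its internal coherence rather than against a visible argument in this document.
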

        Now, we are ready to establish the convergence result of this paper.
    \begin{theorem}\label{thm:DFN_fem_full_estimation} 
        If Hypotheses~\ref{asp:nonlinear_function}-\ref{asp:DFN_prior_bound} hold, 
        there exists a constant $C$ that does not depend on $\tau$, $h$ and $\Delta r$ such that 
        \begin{gather}
            \begin{split}\label{eq:DFN_fem_full_estimation_macro}
                \sum_{k=1}^{K}\tau \qty(\left\|\phi_1^k-\phi_{1 h}^k\right\|_{1,\Omega}^2 + \left\|\phi_2^k-\phi_{2 h}^k\right\|_{1,\Omega_2}^2 + \left\|c_1^k-c_{1 h}^k\right\|_{1,\Omega}^2  + \left\|\bar c_2^k -\bar c_{2 h}^k\right\|_{0,\Omega_2}^2 ) \\
                \leq C\left(h^2+\tau^2+\qty(\Delta r)^4\right)  +C\left(\left\|c_{10}-c_{10,h}\right\|_{0,\Omega}^2+\left\|c_{20}-c_{20,h\Delta r}\right\|_{0,\Omega_2;0,r}^2\right)  
            \end{split} \\ 
            \begin{split}\label{eq:DFN_fem_full_estimation_micro}
                \sum_{k=1}^{K}\tau \left\|c_2^k-c_{2 h\Delta r}^k\right\|_{0,\Omega_2;q,r}^2& \leq C\left(h^2+\tau^2+\qty(\Delta r)^{4-2q}\right) \\  +C&\left(\left\|c_{10}-c_{10,h}\right\|_{0,\Omega}^2+\left\|c_{20}-c_{20,h\Delta r}\right\|_{0,\Omega_2;0,r}^2\right),\quad q=0,1. 
            \end{split}
        \end{gather}
    \end{theorem}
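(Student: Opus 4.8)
The plan is to close a coupled system of discrete energy inequalities: combine Lemmas~\ref{lemma:DFN_fem_estimation_theta_c1} and \ref{lemma:DFN_fem_estimation_theta_c2} with Proposition~\ref{prop:DFN_fem_estimation_Phi}, feed in the splittings \eqref{app_eq:DFN_fem_c1_err_decomp} and \eqref{eq:DFN_fem_c2_err_decomp}, and finish with a discrete Grönwall argument; then translate the bound on $\theta_1,\theta_2$ back into the four advertised error quantities.

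First I would remove the potential errors from the right-hand sides of the two lemmas. Proposition~\ref{prop:DFN_fem_estimation_Phi} lets me replace $\|\phi_1^k-\phi_{1h}^k\|_{0,\Omega}^2+\|\phi_2^k-\phi_{2h}^k\|_{0,\Omega_2}^2$ by $Ch^2(\|\phi_1^k\|_{2,\Omega_1}^2+\|\phi_2^k\|_{2,\Omega_2}^2)+C(\|c_1^k-c_{1h}^k\|_{1,\Omega}^2+\|\bar c_2^k-\bar c_{2h}^k\|_{0,\Omega_2}^2)$, and then the decompositions give $\|c_1^k-c_{1h}^k\|_{1,\Omega}\le\|\rho_1^k\|_{1,\Omega}+\|\theta_1^k\|_{0,\Omega}+\|\nabla\theta_1^k\|_{0,\Omega}$ and $\|\bar c_2^k-\bar c_{2h}^k\|_{0,\Omega_2}\le\|\bar\rho_2^k\|_{0,\Omega_2}+\|\bar\theta_2^k\|_{0,\Omega_2}$, the latter reduced further by the radial trace inequality of Proposition~\ref{prop:DFN_radial_surface}, exactly as in \eqref{eq:DFN_theta2_surface_estimation}. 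The decisive bookkeeping point is that every $\|\nabla\theta_1^k\|_{0,\Omega}^2$ and $\|\partial_r\theta_2^k\|_{0,\Omega_2;0,r}^2$ generated in this way is multiplied by a free small parameter ($\epsilon$, $\epsilon_1$, and the one introduced in the trace inequality), so after \emph{adding} the two (modified) lemma inequalities these terms are absorbed into the coercive contributions $2\underline{k_1}\tau\|\nabla\theta_1^k\|_{0,\Omega}^2+2\underline{k_2}\tau\|\partial_r\theta_2^k\|_{0,\Omega_2;0,r}^2$ on the left, once the small parameters are fixed small enough. All $\rho$-terms are bounded via \eqref{eq:DFN_fem_c1_rho_err}, \eqref{eq:DFN_fem_c2_rho_err}, \eqref{eq:DFN_fem_c2_rho_surf} and the regularity of Assumption~\ref{asp:DFN_fem_regularity}, which also controls the $\partial_t$- and $\partial_{tt}$-norms that occur.

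Next I would sum the resulting inequality over $k=1,\dots,n$ for arbitrary $n\le K$: the differences $\|\theta_i^k\|^2-\|\theta_i^{k-1}\|^2$ telescope, the nonnegative $\|\theta_i^k-\theta_i^{k-1}\|^2$ are dropped, and the piecewise time norms collapse, e.g. $\sum_k\|\partial_t c_1\|_{0,k;2,\Omega_1}^2=\|\partial_t c_1\|_{0;2,\Omega_1}^2$ and $\sum_k\tau\le T$, so the data side adds up to $C(h^2+\tau^2+(\Delta r)^4)$. The initial contributions are handled by $\|\theta_1^0\|_{0,\Omega}\le\|(I-P_h)c_{10}\|_{0,\Omega}+\|c_{10}-c_{10,h}\|_{0,\Omega}$ and $\|\theta_2^0\|_{0,\Omega_2;0,r}\le\|(I-P_{h\Delta r})c_{20}\|_{0,\Omega_2;0,r}+\|c_{20}-c_{20,h\Delta r}\|_{0,\Omega_2;0,r}$, with the projection parts folded into the $h^2+(\Delta r)^4$ term via \eqref{eq:DFN_fem_c1_rho_err}, Lemma~\ref{lemma:DFN_projection_hdr} and Assumption~\ref{asp:DFN_fem_IC}. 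What survives on the right is $C(h^2+\tau^2+(\Delta r)^4)+C(\|c_{10}-c_{10,h}\|_{0,\Omega}^2+\|c_{20}-c_{20,h\Delta r}\|_{0,\Omega_2;0,r}^2)+C\sum_{k=1}^{n}\tau(\|\theta_1^k\|_{0,\Omega}^2+\|\theta_2^k\|_{0,\Omega_2;0,r}^2)$, so (for $\tau$ below a threshold independent of $h,\Delta r$, to absorb the $k=n$ term) the discrete Grönwall inequality gives a bound of the claimed form for $\|\theta_1^n\|_{0,\Omega}^2+\|\theta_2^n\|_{0,\Omega_2;0,r}^2$ uniform in $n$; reinserting it into the summed inequality bounds $\sum_{k=1}^{K}\tau(\|\nabla\theta_1^k\|_{0,\Omega}^2+\|\partial_r\theta_2^k\|_{0,\Omega_2;0,r}^2)$ by the same quantity.

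Finally, for \eqref{eq:DFN_fem_full_estimation_macro}--\eqref{eq:DFN_fem_full_estimation_micro} I would convert these $\theta$-estimates into the full errors. For $c_1$ and for $c_2$ with $q=0,1$ I use the triangle inequality together with $\sum_k\tau\|\theta_i^k\|_0^2\le T\max_n\|\theta_i^n\|_0^2$, the just-obtained bounds on $\sum_k\tau\|\nabla\theta_1^k\|_0^2$ and $\sum_k\tau\|\partial_r\theta_2^k\|_{0;0,r}^2$, and the $\rho$-estimates (where \eqref{eq:DFN_fem_c2_rho_err} supplies precisely the $(\Delta r)^{4-2q}$ scaling); for $\bar c_2$ I also invoke \eqref{eq:DFN_theta2_surface_estimation} and \eqref{eq:DFN_fem_c2_rho_surf}; and for $\phi_1,\phi_2$ I apply Proposition~\ref{prop:DFN_fem_estimation_Phi} once more, inserting the $c_1$- and $\bar c_2$-bounds already proved and using the $H^2$-regularity of $\phi_1,\phi_2$ for the $h^2$ part. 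I expect the main obstacle to be exactly this coupling through the potentials: since Proposition~\ref{prop:DFN_fem_estimation_Phi} returns the full $H^1$-norm of $c_1-c_{1h}$ and the surface norm of $\bar c_2-\bar c_{2h}$ into \emph{both} lemmas, one must check that all the resulting gradient and trace terms of $\theta_1,\theta_2$ genuinely carry a small factor and arrange the order of choices ($\epsilon$, $\epsilon_1$ and the trace parameter small first, then $\tau$ small) so that the coercive terms on the left are never over-absorbed.
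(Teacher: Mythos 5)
Your proposal is correct and follows essentially the same route as the paper: decompose the errors into $\rho$ and $\theta$ parts, substitute Proposition~\ref{prop:DFN_fem_estimation_Phi} and the trace inequality \eqref{eq:DFN_theta2_surface_estimation} into the two lemmas to get a closed system in $\theta_1,\theta_2$, add the two inequalities so the $\epsilon$-weighted gradient and radial-derivative terms are absorbed into the coercive left-hand side, telescope and apply discrete Grönwall (with $\tau$ small enough to absorb the $k=n$ term), and finally convert the $\theta$-bounds back to the four advertised error quantities via triangle inequalities, the $\rho$-estimates, \eqref{eq:DFN_fem_c2_rho_surf}, and a second application of Proposition~\ref{prop:DFN_fem_estimation_Phi}. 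The explicit bookkeeping you flag at the end — fixing the small parameters $\epsilon,\epsilon_1,\epsilon_2$ before taking $\tau$ small so that the left-hand side coercivity is not over-absorbed — is exactly how the paper organizes the estimate.
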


    \begin{proof}
        Combining Lemma~\ref{lemma:DFN_fem_estimation_theta_c1} with Proposition~\ref{prop:DFN_fem_estimation_Phi}, and utilizing \eqref{app_eq:DFN_fem_c1_err_decomp}, \eqref{eq:DFN_fem_c1_rho_err}, \eqref{eq:DFN_fem_c2_rho_surf} and \eqref{eq:DFN_theta2_surface_estimation}, there is a constant $\epsilon_1$ to be determined later, such that
        \begin{equation}\label{eq:DFN_fem_estimation_theta_c1_expand}
            \begin{aligned}
            & \left\|\theta_1^k\right\|_{0,\Omega}^2-\left\|\theta_1^{k-1}\right\|_{0,\Omega}^2+\left\|\theta_1^k-\theta_1^{k-1}\right\|_{0,\Omega}^2+2 \underline{k_1} \tau\left\|\nabla \theta^k\right\|_{0,\Omega}^2 \\
            \leq & C\qty(\epsilon_1) \tau\qty(\left\|\theta_1^k\right\|_{0,\Omega}^2+ \left\| \theta_2^k \right\|_{0,\Omega_2 ; 0, r}^2)+C \epsilon_1 \tau\qty(\left\|\nabla \theta_1^k\right\|_{0,\Omega}^2 + \left\|\frac{\partial \theta_2^k }{\partial r}\right\|_{0,\Omega_2 ; 0,r}^2 ) \\
            +&C  h^2\qty(\tau\left\|\phi_1^k\right\|_{2,\Omega_1}^2 + \tau\left\|\phi_2^k\right\|_{2,\Omega_2}^2 + \tau\norm{c_{10}}_{{2,\Omega_1}}^2 + \tau\norm{\frac{\partial c_1}{\partial t}}_{0; 2,\Omega_1}^2 +  \left\|\frac{\partial c_1}{\partial t}\right\|_{0, k; 2,\Omega_1}^2) \\
            + & C \tau^2\left\|\frac{\partial^2 c_1}{\partial t^2}\right\|_{0, k ; 0,\Omega}^2 
            +C\tau \qty(h^2+(\Delta r)^4)\qty(\norm{c_{20}}_{X}^2+\norm{ \pdv{c_2}{t}}_{{0;{X}}}^2).
            \end{aligned}
        \end{equation}
        Again, combining Lemma~\ref{lemma:DFN_fem_estimation_theta_c2} with Proposition~\ref{prop:DFN_fem_estimation_Phi}, together with \eqref{app_eq:DFN_fem_c1_err_decomp}, \eqref{eq:DFN_fem_c1_rho_err}, \eqref{eq:DFN_fem_c2_rho_surf} and \eqref{eq:DFN_theta2_surface_estimation}, there is also a constant $\epsilon_2\in \qty(0,1)$ to be determined later, such that
        \begin{equation}\label{eq:DFN_fem_estimation_theta_c2_expand}
            \begin{aligned}
                & \left\|\theta_2^k\right\|_{0,\Omega_2 ; 0, r}^2-\left\|\theta_2^{k-1}\right\|_{0,\Omega_2 ; 0, r}^2+\left\|\theta_2^k-\theta_2^{k-1}\right\|_{0,\Omega_2 ; 0, r}^2 + 2\underline{k_2} \tau\left\|\frac{\partial \theta_2^k}{\partial r}\right\|_{0,\Omega_2 ;0, r}^2 \\
                \leq &C\left(\epsilon_2\right) \tau\left(\left\|\theta_1^k\right\|_{0,\Omega}^2+\left\|\theta_2^k\right\|_{0,\Omega_2; 0, r}^2\right) + C\epsilon_2 \tau\left(\left\|\nabla \theta_1^k\right\|_{0,\Omega}^2 + \norm{\pdv{\theta_2^k}{r}}_{0,\Omega_2;0,r}^2\right) \\
                & +C \tau h^2\qty(\left\|\phi_1^k\right\|_{2,\Omega_1}^2 + \left\|\phi_2^k\right\|_{2,\Omega_2}^2 + \norm{c_{10}}_{{2,\Omega_1}}^2 + \norm{\frac{\partial c_1}{\partial t}}_{0; 2,\Omega_1}^2) \\ 
                &  +C\qty(h^2+(\Delta r)^4)\qty(\tau\norm{c_{20}}_{X}^2+\tau\norm{ \pdv{c_2}{t}}_{{0;{X}}}^2 + \norm{ \pdv{c_2}{t}}^2_{0,k;X}) \\
                &  + C\tau^2\left\|\frac{\partial^2 c_2}{\partial t^2}\right\|_{0, k; 0,\Omega_2;0,r}^2.
            \end{aligned}
        \end{equation}
        Next, adding \eqref{eq:DFN_fem_estimation_theta_c1_expand} and \eqref{eq:DFN_fem_estimation_theta_c2_expand}, and selecting $\epsilon_1$, $\epsilon_2$ sufficiently small, we have
        \begin{equation*}
            \begin{aligned}
            &\left\|\theta_1^k\right\|_{0,\Omega}^2+\left\|\theta_2^k\right\|_{0,\Omega_2 ; 0, r}^2  + \underline{k_1} \tau\left\|\nabla \theta_1^k\right\|_{0,\Omega}^2 + \underline{k_2} \tau\left\|\frac{\partial \theta_2^k}{\partial r}\right\|_{0,\Omega_2 ;0, r}^2 \\
            \leq & \left\|\theta_1^{k-1}\right\|_{0,\Omega}^2+\left\|\theta_2^{k-1}\right\|_{0,\Omega_2 ; 0, r}^2 + C\tau\qty( \left\|\theta_1^{k}\right\|_{0,\Omega}^2 + \left\|\theta_2^{k}\right\|_{0,\Omega_2 ; 0, r}^2) \\ 
            & +  C\tau^2\left(\left\|\frac{\partial^2 c_1}{\partial t^2}\right\|_{0, k ; 0,\Omega}^2 + \left\|\frac{\partial^2 c_2}{\partial t^2}\right\|_{0, k; 0, \Omega_2;0,r}^2 \right)  \\ 
            & +C \tau h^2\left(\left\|\phi_1\right\|_{\infty;2,\Omega_1}^2 + \left\|\phi_2\right\|_{\infty;2,\Omega_2}^2 + \norm{c_{10}}_{{2,\Omega_1}}^2  + \norm{c_{20}}_{X}^2\right)  \\
            & + C \tau h^2\qty(\norm{\frac{\partial c_1}{\partial t}}_{0; 2,\Omega_1}^2+\norm{ \pdv{c_2}{t}}_{{0;{X}}}^2)+ C\tau(\Delta r)^4\qty(\norm{c_{20}}_{X}^2+\norm{ \pdv{c_2}{t}}_{{0;{X}}}^2) \\ 
            & + C  h^2\left\|\frac{\partial c_1}{\partial t}\right\|_{0, k; 2,\Omega_1}^2 +C\qty(h^2+(\Delta r)^4)\norm{ \pdv{c_2}{t}}^2_{0,k;X}. 
            \end{aligned}
        \end{equation*}
        Discrete Gronwall's inequality yields 
        \begin{equation*}
            \begin{aligned}
                &\left\|\theta_1^k\right\|_{0,\Omega}^2   + \left\|\theta_2^k\right\|_{0,\Omega_2 ; 0, r}^2 +  \sum_{l=1}^k \underline{k_1} \tau\left\|\nabla \theta_1^l\right\|_{0,\Omega}^2 +  \sum_{l=1}^k \underline{k_2} \tau\left\|\frac{\partial \theta_2^l}{\partial r}\right\|_{0,\Omega_2 ; 0, r}^2  \\ 
                \leq &C\qty(\left\|\theta_1^{0}\right\|_{0,\Omega}^2 + \left\|\theta_2^{0}\right\|_{0,\Omega_2 ; 0, r}^2) +C(\Delta r)^4\qty(\norm{c_{20}}_{X}^2+\norm{ \pdv{c_2}{t}}_{{0;{X}}}^2)\\ 
                &+C h^2\left(\left\|\phi_1\right\|_{\infty;2,\Omega_1}^2 + \left\|\phi_2\right\|_{\infty;2,\Omega_2}^2 + \norm{c_{10}}_{{2,\Omega_1}}^2  + \norm{c_{20}}_{X}^2\right)\\ 
                & + C h^2\qty(\norm{\frac{\partial c_1}{\partial t}}_{0; 2,\Omega_1}^2+\norm{ \pdv{c_2}{t}}_{{0;{X}}}^2) 
                + C\tau^2\left(\left\|\frac{\partial^2 c_1}{\partial t^2}\right\|_{0; 0,\Omega}^2 + \left\|\frac{\partial^2 c_2}{\partial t^2}\right\|_{0; 0, \Omega_2;0,r}^2\right) \\ 
                \leq & C\qty(\left\|\theta_1^{0}\right\|_{0,\Omega}^2 + \left\|\theta_2^{0}\right\|_{0,\Omega_2 ; 0, r}^2 ) +C\left(\tau^2 + h^2+\qty(\Delta r)^4\right).
            \end{aligned}
        \end{equation*}
        Thus, we have the following $l^2\qty(H^1)$-norm estimation for $c_1$, demonstrating the optimal order of convergence,
        \begin{equation*}
            \begin{aligned}
            \sum_{l=1}^k \tau\left\|c_1^l-c_{1h}^{l}\right\|_{1,\Omega}^2 & \leq \sum_{l=1}^k 2 \tau\left(\left\|\rho_1^l\right\|_{1,\Omega}^2+ \left\|\theta_1^l\right\|_{0,\Omega}^2 +\left\|\nabla \theta_1^l\right\|_{0,\Omega}^2\right) \\
            & \leq C h^2 + C\tau \sum_{l=1}^k\qty(  \left\|\theta_1^l\right\|_{0,\Omega}^2 +  \left\|\nabla \theta_1^l\right\|_{0,\Omega}^2) \\
            & \leq C\left(\tau^2+h^2+(\Delta r)^4\right)+C\left(\left\|\theta_1^0\right\|_{0,\Omega}^2+\left\|\theta_2^0\right\|_{0,\Omega_2 ; 0, r}^2\right),
            \end{aligned}
        \end{equation*}
        and $l^2\qty(L^2\qty(H_r^q))$-norm, $q=0,1$, for $c_2$
        \begin{equation*}
            \begin{aligned}
            \sum_{l=1}^k \tau\left\|c_2^l-c_{2h}^l\right\|_{0,\Omega_2;q,r}^2  \leq &  \sum_{l=1}^k 2 \tau\left(\left\|\rho_2^l\right\|_{0,\Omega_2;q,r}^2+ \left\|\theta_2^l\right\|_{0,\Omega_2;q,r}^2\right)  \\ 
            \leq  & C\left(\tau^2 + h^2+(\Delta r)^{4-2q}\right)+C\left(\left\|\theta_1^0\right\|_{0,\Omega}^2+\left\|\theta_2^0\right\|_{0,\Omega_2;0,r}^2\right).
            \end{aligned}
        \end{equation*}
        Further with $l^2\qty(L^2)$-norm estimation for $\bar c_2$,
        \begin{equation*}
            \begin{aligned}
            \sum_{l=1}^k \tau\left\|\bar{c}_2^l-\bar{c}_{2h}^l\right\|_{0,\Omega_2}^2  \leq &  \sum_{l=1}^k C \tau\left(\left\|\bar \rho_2^l\right\|_{0,\Omega_2}^2+ \left\| \theta_2^l\right\|_{0,\Omega_2;0,r}^2 +  \left\| \pdv{\theta_2^l}{r}\right\|_{0,\Omega_2;0,r}^2\right) \\ 
            \leq  & C\left(\tau^2 + h^2+(\Delta r)^4\right)+C\left(\left\|\theta_1^0\right\|_{0,\Omega}^2+\left\|\theta_2^0\right\|_{0,\Omega_2;0,r}^2\right),
            \end{aligned}
        \end{equation*}
        we also have $l^2\qty(H^1)$-norm estimation for $\phi_1$ and $\phi_2$ with the optimal order of convergence,
        \begin{equation*}
            \begin{aligned}
            & \sum_{t=1}^k \tau\left\|\phi_1^l-\phi_1^l\right\|_{1,\Omega}^2+\tau\left\|\phi_2^l-\phi_2^l\right\|_{1,\Omega_2}^2 \\
            \leq & C h^2 \left(\left\|\phi_1\right\|_{\infty;2,\Omega_1}^2+\left\|\phi_2\right\|_{\infty;2,\Omega_2}^2\right)  +C \sum_{l=1}^k \qty(\tau\left\|c_1^l-c_{1h}^l\right\|_{1,\Omega}^2+\tau\left\|\bar{c}_2^l-\bar{c}_{2h}^{l}\right\|_{0,\Omega_2}^2) \\ 
            \leq & C\left(\tau^2 + h^2+(\Delta r)^4\right)+C\left(\left\|\theta_1^0\right\|_{0,\Omega}^2+\left\|\theta_2^0\right\|_{0,\Omega_2;0,r}^2\right).
            \end{aligned}
        \end{equation*}
        Finally, notice that triangle inequality yields
        \begin{gather*}
          \left\|\theta_{1}^0\right\|_{L^2\left(\Omega\right)} \le \norm{c_{10}-c_{1h}^0}_{L^2\left(\Omega\right)} + Ch, \\ 
          \left\|\theta_{2}^0\right\|_{L^2\qty(\Omega_2 ; L_r^2\qty(0, R_\mathrm{s}\qty(\cdot)))} \le \norm{c_{20}-c_{2h\Delta r}^0}_{L^2\qty(\Omega_2 ; L_r^2\qty(0, R_\mathrm{s}\qty(\cdot)))} + C\qty(h+\qty(\Delta r)^2),
        \end{gather*}
        which completes the proof.
    \end{proof}
    
    \section{A twice decoupled solver}
    \label{sec:fem_solver}
        
        Let \(\left(\phi_{1 h}^{k-1}, \phi_{2 h}^{k-1}, c_{1h}^{k-1}, c_{2 h\Delta r}^{k-1}\right)\) denote the fully discrete solution at the \((k-1)\)-th time step. In this section, we focus on efficiently solving the fully discrete system \eqref{eq:DFN_fem_phi1}--\eqref{eq:DFN_fem_c2} to obtain the next solution tuple \(\left(\phi_{1 h}^{k}, \phi_{2 h}^{k}, c_{1h}^{k}, c_{2 h\Delta r}^{k}\right)\) at time step \(k\).

        We propose a novel, twice-decoupled fast solver to achieve this goal. The effectiveness and computational advantages of this approach will be demonstrated through numerical experiments in Subsection~\ref{subsec:exp_solver}.

        \subsection{First decoupling: local inversion}
            Let \(\{\chi_i \psi_j\}_{i=1,\dots,M_2}^{j=1,\dots,N_i}\) denote the basis functions for \(V_{h \Delta r}(\bar \Omega_{2 r})\), where $\chi_i = \boldsymbol{1}_{\hat e_i}$ and $\set{\psi_j}_{j=1,\dots, N_{i}}$ is the nodal basis for $V^\qty(1)_{\Delta r}\qty[0,R_m]$, for each $ \hat e_i \in \mathcal{T}_{h,m}$,  $m\in \set{\text{n,p}}$. Then the solution $c_{2h\Delta r}^{k}\in V_{h \Delta r}\left(\bar \Omega_{2 r}\right)$ can be expressed as
            \begin{equation*}
            c_{2h\Delta r}^{k} (x,r) = \sum_{i=1}^{M_2}\sum_{j=1}^{N_{i}} c_{2h\Delta r,ij}^{k} \chi_i(x) \psi_j(r).
            \end{equation*}

            Substituting the test function \(v_{h \Delta r}(x,r) = \chi_i(x)\psi_{j}(r)\) into \eqref{eq:DFN_fem_c2}, we obtain for each \(\hat e_i \in \mathcal{T}_{2,h}\) the following equation: 
            \begin{multline}\label{eq:electrochemistry_nl_coup_c2}
                \int_0^{R_m} \frac{c_{2 h \Delta r,il}^{k} - c_{2 h \Delta r,il}^{k-1}}{\tau} \psi_{l} \psi_{j} r^2 \dd r + \int_0^{R_m} k_2 c_{2 h \Delta r,il}^{k} \frac{\partial \psi_l}{\partial r} \frac{\partial \psi_{j}}{\partial r} r^2 \dd r  \\
                + \frac{\delta_{jN_i}}{\qty|\hat e_i|}\int_{\hat e_i} \qty(\frac{R_m^2}{F}  J_{m}\qty(c_{1h}^{k},c_{2 h \Delta r,iN_i}^{k},\phi_{2h}^{k}-\phi_{1h}^{k}-U_m\qty(c_{2 h \Delta r,iN_i}^{k})))  \dd x = 0,
            \end{multline} 
            where the Einstein summation convention is applied to index \(l\).
            Equation~\eqref{eq:electrochemistry_nl_coup_c2} can be rewritten in matrix form as
            \begin{equation}\label{eq:electrochemistry_nl_coup_c2_matrix}
                AC_i^{k} - MC_i^{k-1} + J_{h,i}^{k}e_{N_i} = 0,
            \end{equation}
            where $C_i^{k} = \qty(c_{2 h \Delta r,i1}^{k}, \cdots, c_{2 h \Delta r,iN_i}^{k})^{\mathrm{T}}$, $C_i^{k-1} = \qty(c_{2 h \Delta r,i1}^{k-1}, \cdots, c_{2 h \Delta r,iN_i}^{k-1})^{\mathrm{T}}$, $e_{N_i} = \qty[0,\dots,0,1]^{\mathrm{T}}\in \mathbb{R}^{N_i}$, $A = M + \tau K \in  \mathbb{R}^{N_i\times N_i}$,
            \begin{equation*}
                M_{ij} =  \int_0^{R_m} \psi_{i} \psi_{j} r^2 \dd r,\quad K_{ij} =\int_0^{R_m} k_2 \frac{\partial \psi_i}{\partial r} \frac{\partial \psi_{j}}{\partial r} r^2 \dd r, 
            \end{equation*}
            and the nonlinear function
            \begin{equation*}
                J_{h,i}^{k}:= \frac{\tau}{\qty|\hat e_i|}\int_{\hat e_i} \qty(\frac{R_m^2}{F}  J_{m}\qty(c_{1h}^{k},c_{2 h \Delta r,iN_i}^{k},\phi_{2h}^{k}-\phi_{1h}^{k}-U_m\qty(c_{2 h \Delta r,iN_i}^{k})))  \dd x.
            \end{equation*}

            Although only the \(N_i\)-th equation in \eqref{eq:electrochemistry_nl_coup_c2_matrix} is nonlinear, it is coupled to the other linear equations via the matrix \(A\). Since \(A\) is symmetric and positive definite, the system is equivalently written as
            \begin{equation}\label{eq:electrochemistry_nl_coup_c2_matrix_equiv}
                C_i^{k} - A^{-1}MC_i^{k-1} + J_{h,i}^{k} A^{-1}e_{N_i} = 0.
            \end{equation}
            Taking the dot product with \(e_{N_i}^{\mathrm{T}}\), we isolate the scalar nonlinear equation associated with the surface DOF \(c_{2h\Delta r,iN_i}^{k}\): 
            \begin{equation}\label{eq:electrochemistry_nl_coup_c2_scalar}
                c_{2h\Delta r,iN_i}^{k} +  e_{N_i}^{\mathrm{T}}A^{-1}e_{N_i} J_{h,i}^{k} - e_{N_i}^{\mathrm{T}}A^{-1}MC_i^{k-1} = 0.
            \end{equation}
            Due to the low order and tridiagonal structure of $A$, the matrix inversion is computationally inexpensive and can be efficiently carried out using algorithms like Thomas algorithm (TDMA) or cholesky decomposition.
            
            This observation motivates a scale-decoupled solution strategy for the fully discrete system \eqref{eq:DFN_fem_phi1}--\eqref{eq:DFN_fem_c2}. First, we solve the decoupled system consisting of \eqref{eq:DFN_fem_phi1}--\eqref{eq:DFN_fem_c1} and the scalar equations \eqref{eq:electrochemistry_nl_coup_c2_scalar} to determine \(c_{1h}^{k}\), \(\phi_{1h}^{k}\), \(\phi_{2h}^{k}\), and the surface values \(c_{2h\Delta r,iN_i}^{k}\) for \(i = 1, \dots, M_2\). Then, using backward substitution into \eqref{eq:electrochemistry_nl_coup_c2_matrix}, we compute the remaining degrees of freedom in \(c_{2h\Delta r}^{k}\). 
            This approach is naturally parallelizable and significantly reduces the computational complexity by decoupling the spatial and particle-scale resolutions.

            We now propose the following novel solver to efficiently handle the scale-coupled nonlinear system: 


            \textbf{Step 1} (Decoupled global solve): Find $c_{1 h}^{k}\in V_h^{(1)}\left(\bar \Omega\right) $, $\phi_{1 h}^{k} \in  W_h\left(\bar \Omega\right)$, $\phi_{2 h}^{k}\in V_h^{(1)}\left(\bar \Omega_2\right)$,  and  $c_{2h\Delta r,iN_i}^{k}$, $ i=1,\dots, M_2$, such that for all $v_{1h} \in  V_h^{(1)}\left(\bar \Omega\right) $, $w_h \in  W_h\left(\bar \Omega\right) $, $ v_{2h} \in V_h^{(1)}\left(\bar \Omega_2\right) $,
            \begin{equation}\label{eq:electrochemistry_step1-1_c1}
                \int_{ \Omega} \varepsilon_1\frac{c_{1 h}^{k}-c_{1 h}^{k-1}}{\tau} v_{1h}  \, \dd x +\int_{\Omega} k_1 \nabla c_{1 h}^{k} \cdot \nabla v_{1h} \, \dd x -\int_{\Omega_2} a_1  J_h^{k} v_{1h} \, \dd x=0,
            \end{equation}
            \begin{equation}\label{eq:electrochemistry_step1-1_phi1}
                \int_{\Omega} \kappa_{1h}^{k}\nabla \phi_{1h}^k \cdot \nabla w_h\, \dd x - \int_{\Omega} \kappa_{2h}^{k}\nabla f\qty(c_{1h}^{k}) \cdot \nabla w_h\,\dd x - \int_{\Omega_2}  a_2  J_h^{k}  w_h \,\dd x =0,
            \end{equation} 
            \begin{equation}\label{eq:electrochemistry_step1-1_phi2} 
                \int_{\Omega_2} \sigma \nabla \phi_{2 h}^{k} \cdot \nabla v_{2h}\,\dd x + \int_{\Omega_2}a_2  J_h^{k} v_{2h}\, \dd x + \int_\Gamma I^k v_{2h} \,\dd x  =0, 
            \end{equation} 
            \begin{equation}\label{eq:electrochemistry_step1-1_c2_scalar}
                c_{2h\Delta r,iN_i}^{k} +  e_{N_i}^{\mathrm{T}}A^{-1}e_{N_i} J_{h,i}^{k} - e_{N_i}^{\mathrm{T}}A^{-1}MC_i^{k-1} = 0, \; i=1,\dots, M_2.
            \end{equation}

            \textbf{Step 2} (Local backward recovery):
            For each \(i = 1, \dots, M_2\), recover the remaining degrees of freedom \(c_{2h\Delta r,ij}^{k}\) for \(j = N_i-1, \dots, 1\), by solving the lower part of the linear system:
            \begin{equation}\label{eq:electrochemistry_step1-2_c2}
                AC_i^{k} - MC_i^{k-1} + J_{h,i}^{k}e_{N_i} = 0.
            \end{equation}

            \begin{remark}
                Equation~\eqref{eq:electrochemistry_step1-1_c2_scalar} constitutes \(M_2\) independent nonlinear equations indexed by \(i\). Owing to the tridiagonal structure of the matrix \(A\) in \eqref{eq:electrochemistry_step1-2_c2}, once the surface value \(c_{2h\Delta r,iN_i}^{k}\) is determined from \textbf{Step 1}, the remaining unknowns can be computed efficiently without invoking a direct or iterative linear solver. Specifically, one can use backward substitution from the \((N_i - 1)\)-th equation down to the first equation.
            \end{remark}

        \subsection{Second decoupling: Jacobian elimination}
            To solve the nonlinear system given by equations \eqref{eq:electrochemistry_step1-1_c1}-\eqref{eq:electrochemistry_step1-1_c2_scalar} (i.e., \textbf{Step 1} above), we employ Newton's method with a line search strategy:

            1. (Initialization) Set the initial guess $X^0:=\qty(c_{1 h}^{k,0}, \phi_{1 h}^{k,0}, \phi_{2 h}^{k,0}, \bar c_{2 h}^{k,0})$, typically as
            $$\phi_{1 h}^{k,0} = \phi_{1 h}^{k-1},\, \phi_{2 h}^{k,0} = \phi_{2 h}^{k-1},\, c_{1h}^{k,0} = c_{1h}^{k-1},\, \bar c^{k,0}_{2 h}=c_{2 h\Delta r}^{k-1}\qty(\cdot, R_\mathrm{s}\qty(\cdot)).$$ 
            Define the auxiliary quantities 
            $$U_h^{k,0} = \sum_{m\in \set{\text{n,p}}} U_m(\bar c_{2h}^{k,0}) \boldsymbol{1}_{\Omega_m},\quad \eta_{h}^{k,0} = \phi_{2h}^{k,0} - \phi_{1h}^{k,0}- U_h^{k,0}.$$
            Let \( rtol \) denote the relative tolerance and set the iteration index \( n = 1 \).
            
            2. (Newton Step) At each iteration \(n\), compute the Newton direction \( d^n \) by solving the linear system 
            \begin{equation}\label{eq:solver_newton_eq}
                J^n d^n + F^n = 0,
            \end{equation}
            where $J^n$ and $F^n$ are the Jacobian and residual evaluated at \( X^{n-1} \).

            3. (Line Search) Determine a step size \( \gamma^n \) using a line search algorithm, and update
            $$ X^n:=  X^{n-1} + \gamma^n d^n. $$

            4. (Convergence Check) If $\norm{\frac{X^{n} - X^{n-1}}{{X^{n-1}}}}_{l^\infty} < rtol $, terminate the iteration and set $\phi_{1 h}^{k} = \phi_{1 h}^{k,n}$, $\phi_{2 h}^{k} = \phi_{2 h}^{k,n}$, $c_{1h}^{k} = c_{1h}^{k,n}$, $\bar c_{2 h}^{k}=\bar c_{2 h}^{k,n}$. Otherwise, increment \(n\) and continue the iteration.

            The main computational burden lies in solving the linear system \eqref{eq:solver_newton_eq}.
            To define the residual vector $F$ and Jacobian matrix $J$, we arrange the equations and degrees of freedom in the following order: $c_{1 h}$, $\phi_{1 h}$, $\phi_{2 h}$ and $c_{2h\Delta r,iN_i}^{k}$, $ i=1,\dots, M_2$. This ordering allows the residual vector to be expressed as
            \begin{equation}
                F = \qty[ F_{c_1}^T\quad F_{\phi_1}^T\quad F_{\phi_2}^T\quad F_{c_2 1}\quad \cdots \quad  F_{c_2 M_2}]^T. 
            \end{equation} 
            Here, the scalar function $F_{c_2 i}$ represents the residual for the $i$-th equation in \eqref{eq:electrochemistry_step1-1_c2_scalar}, while
            the $i$-th component of the vector function $F_{c_1}$, $F_{\phi_1}$ or $F_{\phi_2}$ corresponds to the residual obtained by choosing a nodal basis function \( \varphi_i \) associated with the respective degree of freedom as the test function in the variational formulation. For example,
            \begin{equation}
                \qty(F_{c_1})_i = \int_{ \Omega} \varepsilon_1\frac{c_{1 h}^{k}-c_{1 h}^{k-1}}{\tau} \varphi_i  \, \dd x +\int_{\Omega} k_1 \nabla c_{1 h}^{k} \cdot \nabla \varphi_i \, \dd x -\int_{\Omega_2} a_1  J_h^{k} \varphi_i \, \dd x.
            \end{equation} 
            Grouping the variables into macroscopic \((c_{1 h}, \phi_{1 h}, \phi_{2 h})\) and microscopic \((\bar c_{2 h})\) components, the Jacobian matrix \( J \) can be expressed in the block form:
            \begin{equation}\label{eq:solver_newton_Jacobian}
                \begin{bmatrix}
                    \begin{array}{c:c}
                    J_\text{macro}    &  U_\text{src}    \\  \hdashline
                    V_\text{bdry}    & D_\text{micro}    \\ 
                    \end{array}
                \end{bmatrix}
                = \begin{bmatrix}
                    \begin{array}{ccc:cccc}
                    J_{c_1c_1}    & J_{c_1\phi_1}    & J_{c_1\phi_2}    & u_{c_11}    & u_{c_12}    & \cdots & u_{c_1M_2}    \\ 
                    J_{\phi_1c_1} & J_{\phi_1\phi_1} & J_{\phi_1\phi_2} & u_{\phi_11} & u_{\phi_12} & \cdots & u_{\phi_1M_2} \\ 
                    J_{\phi_2c_1} & J_{\phi_2\phi_1} & J_{\phi_2\phi_2} & u_{\phi_21} & u_{\phi_22} & \cdots & u_{\phi_2M_2} \\ \hdashline
                    v_{1c_1}      & v_{1\phi_1}      & v_{1\phi_2}      & w_{11}      & 0           & \cdots & 0      \\ 
                    v_{2c_1}      & v_{2\phi_1}      & v_{2\phi_2}      & 0           & w_{22}      & \cdots & 0      \\ 
                    \vdots        & \vdots           & \vdots           & \vdots      & \vdots      & \ddots & \vdots      \\ 
                    v_{M_2c_1}    & v_{M_2\phi_1}    & v_{M_2\phi_2}    & 0           & 0           & \cdots & w_{M_2M_2}     \\ 
                    \end{array}
                \end{bmatrix}.
            \end{equation}  
            The subscript "src" indicates coupling due to source terms in equations \eqref{eq:electrochemistry_step1-1_c1}--\eqref{eq:electrochemistry_step1-1_phi2}, while "bdry" reflects contributions from particle-boundary conditions in \eqref{eq:electrochemistry_step1-1_c2_scalar}. 
            Each row of the latter block matrix in \eqref{eq:solver_newton_Jacobian}  corresponds to the Jacobian of a subvector of $F$. 
            For example, the Jacobian of $F_{c_1}$ is the first row block:
            $$\begin{bmatrix}
                \begin{array}{ccc:ccccc}
                J_{c_1c_1}    & J_{c_1\phi_1}    & J_{c_1\phi_2}    & u_{c_11}    & u_{c_12}    & \cdots & u_{c_1M_2}
                \end{array}   
            \end{bmatrix},$$
            where 
            $J_{c_1c_1}$, $J_{c_1\phi_1}$, $J_{c_1\phi_2}$ and $u_{c_1j}$ denote the partial derivatives of $F_{c_1}$ with respect to the degrees of freedom of $c_{1h}$, $\phi_{1h}$, $\phi_{2h}$ and $c_{2h,jN_j}$, respectively.
            Similarly, the Jacobian of the \( i \)-th scalar residual \( F_{c_2 i} \) is given by:
            $$\begin{bmatrix}
                \begin{array}{ccc:ccccc}
                v_{ic_1}      & v_{i\phi_1}      & v_{i\phi_2}      & 0     & \cdots      & w_{ii}      & \cdots & 0  
                \end{array}   
            \end{bmatrix},$$
            indicating coupling with all macroscopic variables and only the \( i \)-th microscopic surface unknown.

            It is the spatial independence of $c_{2h\Delta r,iN_i}^{k,n}$ in \eqref{eq:electrochemistry_step1-1_c2_scalar} that leads to the diagonal $D_\text{micro} $. 
            This fascinating structure allows a row transformation to reduce the Jacobian to lower-triangular form
            \begin{align*}
            \begin{bmatrix} I & -U_\text{src}D_\text{micro}^{-1} \\ 0 & I \end{bmatrix} \begin{bmatrix}  J_\text{macro}    &  U_\text{src} \\ V_\text{bdry}    & D_\text{micro} \end{bmatrix}  = \begin{bmatrix} J_\text{macro} - U_\text{src}D_\text{micro}^{-1}V_\text{bdry} & 0  \\ V_\text{bdry} & D_\text{micro} \end{bmatrix}.
            \end{align*}
            Accordingly, the right-hand side is transformed as
            \[
            F = 
            \begin{bmatrix} F_1 \\ F_2 \end{bmatrix}
            \quad \rightarrow \quad
            \begin{bmatrix} F_1 - U_\text{src} D_\text{micro}^{-1} F_2 \\ F_2 \end{bmatrix}.
            \]
            We can thus first solve a smaller linear system using the Schur complement of the block $D_\text{micro}$,        
            \begin{equation}
                J/D_\text{micro}=J_\text{macro} - U_\text{src}D_\text{micro}^{-1}V_\text{bdry},
            \end{equation} 
            to obtain the Newton direction for $c_{1h}$, $\phi_{1h}$ and  $\phi_{2h}$, and then recover the direction for $\bar c_{2h}$ through direct substitution.

            Although \(D_\text{micro}^{-1}\) is trivial to compute due to its diagonal structure, it is still necessary to assess whether the associated elementary row transformations introduce significant fill-in or incur substantial computational cost.
            Fortunately, the following proposition demonstrates that the sparsity pattern of the Jacobian is preserved under this transformation.

            \begin{proposition}\label{prop:schur}
                The Schur complement $J/D_\text{micro}$ shares the same sparsity pattern as $J_\text{macro}$.
            \end{proposition}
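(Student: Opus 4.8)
The plan is to exploit the fact that $D_\text{micro}=\operatorname{diag}(w_{11},\dots,w_{M_2M_2})$ is diagonal — this is precisely the structure produced by the spatial independence of $c_{2h\Delta r,iN_i}^{k}$ — so that $D_\text{micro}^{-1}$ is trivial and the Schur complement splits as a low‑rank correction of $J_\text{macro}$:
\[
J/D_\text{micro}=J_\text{macro}-U_\text{src}D_\text{micro}^{-1}V_\text{bdry}=J_\text{macro}-\sum_{i=1}^{M_2}\frac{1}{w_{ii}}\,u_i\,v_i^{\mathrm T},
\]
where $u_i=\bigl[u_{c_1i}^{\mathrm T}\;u_{\phi_1i}^{\mathrm T}\;u_{\phi_2i}^{\mathrm T}\bigr]^{\mathrm T}$ is the $i$-th column of $U_\text{src}$ and $v_i^{\mathrm T}=\bigl[v_{ic_1}\;v_{i\phi_1}\;v_{i\phi_2}\bigr]$ is the $i$-th row of $V_\text{bdry}$ (invertibility of $D_\text{micro}$ is needed for the Schur complement to exist at all, and is automatic for $\tau$ small since $w_{ii}=1+O(\tau)$). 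Thus the correction is a sum of $M_2$ rank-one matrices, and it suffices to show that for each $i$ the set of nonzero entries of $u_iv_i^{\mathrm T}$ is contained in the sparsity pattern of $J_\text{macro}$.

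First I would localize the two factors. The column $u_i$ collects the partial derivatives of $F_{c_1},F_{\phi_1},F_{\phi_2}$ with respect to the single surface unknown $c_{2h\Delta r,iN_i}^{k}$; in \eqref{eq:electrochemistry_step1-1_c1}--\eqref{eq:electrochemistry_step1-1_phi2} this unknown enters only through $J_h^k$ restricted to the element $\hat e_i$ (because $\bar c_{2h}^k\equiv c_{2h\Delta r,iN_i}^k$ there, and $J_m$ depends on $\bar c_2$ both directly and through $U_m$ inside $\eta$). Hence each entry of $u_i$ is an integral over $\hat e_i$ of a smooth factor against a nodal basis function, and so vanishes unless that basis function is supported on $\hat e_i$; i.e.\ $u_i$ is supported on the $c_1$-, $\phi_1$- and $\phi_2$-degrees of freedom sitting at vertices of $\hat e_i$. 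The same argument applied to \eqref{eq:electrochemistry_step1-1_c2_scalar} shows that $v_i$, the derivatives of the scalar residual $F_{c_2i}$ with respect to the macroscopic unknowns, depends on them only through the element average $J_{h,i}^k=\frac{\tau}{|\hat e_i|}\int_{\hat e_i}\frac{R_m^2}{F}J_m(c_{1h}^k,\dots)\,\mathrm dx$ (times the positive scalar $e_{N_i}^{\mathrm T}A^{-1}e_{N_i}$, using that $A=M+\tau K$ is SPD), so $v_i$ is supported on the very same degrees of freedom. Therefore $u_iv_i^{\mathrm T}$ can be nonzero only in an entry $(a,b)$ for which both $a$ and $b$ lie at vertices of the single electrode element $\hat e_i$.

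Next I would check that every such entry already lies in the sparsity pattern of $J_\text{macro}$. If $a$ and $b$ are vertices of a common element $\hat e_i\subseteq\Omega_2$, then the supports of the associated piecewise-linear nodal basis functions overlap on $\hat e_i$, so the element contribution over $\hat e_i$ to the relevant block of $J_\text{macro}$ is structurally nonzero: for a diagonal block this is already forced by the finite element mass (and stiffness) matrix, and for the $c_1$--$\phi_1$, $c_1$--$\phi_2$, $\phi_1$--$\phi_2$ blocks and their transposes it is forced by the source-Jacobian term $\pm\int_{\Omega_2}a_j\,\frac{\partial J_m}{\partial\eta}\,\varphi_a\varphi_b\,\mathrm dx$, whose integrand has one sign and is bounded below by $\alpha a_j\varphi_a\varphi_b>0$ on $\hat e_i$ by Assumption~\ref{asp:nonlinear_function}. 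Here I use the standing hypothesis that $\mathcal T_h$ and $\mathcal T_{2,h}$ coincide on $\bar\Omega_2$ and match at the interfaces, so that ``a vertex of $\hat e_i$'' refers to the same geometric node across the $c_1$-, $\phi_1$- and $\phi_2$-finite element spaces. This gives $\operatorname{supp}\bigl(U_\text{src}D_\text{micro}^{-1}V_\text{bdry}\bigr)\subseteq\operatorname{supp}(J_\text{macro})$, i.e.\ no fill-in, and since the diagonal mass-matrix entries cannot be cancelled and there is otherwise no structural cancellation, $J/D_\text{micro}$ has exactly the sparsity pattern of $J_\text{macro}$.

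The step I expect to be the main obstacle — or at least the one demanding the most care — is the block-by-block bookkeeping of the last paragraph: one must go through each entry of the $3\times3$ macroscopic Jacobian, pin down exactly where the source term $J_h^k$ enters, and verify that those contributions are genuinely structurally nonzero over any electrode element, so that the rank-one corrections truly fall inside an already-allocated pattern; the mesh-matching hypothesis is what makes the cross-variable index sets line up correctly.
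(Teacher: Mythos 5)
Your argument follows essentially the same route as the paper's: both exploit the diagonality of $D_\text{micro}$ to view the correction as element-local rank-one updates supported on $\hat e_i$, observe that $u_i$ and $v_i$ therefore live on the macroscopic DOFs attached to $\hat e_i$, and conclude that every affected entry $(a,b)$ already belongs to the pattern of $J_\text{macro}$ because those DOFs share an electrode element. The one place you overreach is the final paragraph: to show, e.g., that $J_{\phi_1 c_1}$ and $J_{\phi_2 c_1}$ are ``forced'' nonzero you would need control of $\partial J_m/\partial c_1$ (or the $\kappa_{ih}$ derivatives), not $\partial J_m/\partial\eta$, and Assumption~\ref{asp:nonlinear_function} gives no sign or lower bound there; the paper sidesteps this by arguing only at the level of the allocated finite-element sparsity pattern (the entry ``already exists'' because $R_{\xi(i)k}$ structurally depends on $\xi(j)_{m(j)}$), which is all the proposition asserts and all the implementation in \Cref{alg:FEA_battery_model} requires. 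For the same reason, your closing remark about no structural cancellation is unnecessary: the claim is no fill-in, i.e.\ $\operatorname{supp}(U_\text{src}D_\text{micro}^{-1}V_\text{bdry})\subseteq\operatorname{supp}(J_\text{macro})$, not exact equality of nonzero values.
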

            \begin{proof}
                Due to the diagonal structure of $D_\text{micro}$, if the entry
                $$\qty( U_\text{src}D_\text{micro}^{-1}V_\text{bdry})_{ij} =u_{ik} w_{kk}v_{kj} \neq 0$$ then there exists some $k\in \set{1,2,\dots,M_2}$ such that both $u_{ik}$ and $v_{kj}$ are nonzero.

                Let $\hat e_k \in \mathcal{T}_{2,h}$ and recall that $P_1$ elements are used to discretize $c_1$,$\phi_1$ and $\phi_2$. Suppose the $i$-th degree of freedom of the full system corresponds to the $m(i)$-th degree of freedom of variable $\xi(i) \in \set{c_1,\phi_1,\phi_2}$, denoted by $\xi(i)_{m\qty(i)}$, associated with the global nodal basis $ \varphi_{l(i)}$. Then
                $$ u_{ik} = \qty(u_{\xi(i) k})_{m(i)}= \pdv{R_{\xi(i)k}}{c_{2 h \Delta r,k N_k}}, $$
                where 
                $
                R_{\xi(i)k}= \int_{\hat e_k} a J_{m}\qty(c_{1h},c_{2 h \Delta r,k N_k},\phi_{2h}-\phi_{1h}-U_m\qty(c_{2 h \Delta r,k N_k})) \varphi_{l(i)} \, \dd x
                $, with $a\in \set{-a_1,-a_2,a_2}$. Similarly, 
                $$
                    v_{ki} =   \qty(v_{k\xi(i)})_{m(i)} = \pdv{J_{k}}{\xi(i)_{m\qty(i)}},
                $$
                where $J_{k}=\frac{\tau R_m^2\qty(A^{-1})_{N_kN_k}}{\qty|\hat e_k|F}\int_{\hat e_k} J_{m}\qty(c_{1h},c_{2 h \Delta r, k N_k},\phi_{2h}-\phi_{1h}-U_m\qty(c_{2 h \Delta r, k N_k}))  \dd x$.

                Hence, a nonzero entry $\qty( U_\text{src}D_\text{micro}^{-1}V_\text{bdry})_{ij} $ may only arise if both the $i$-th and $j$-th degrees of freedom of the full system are associated with basis functions supported on the same element $\hat e_k \in \mathcal{T}_{2,h}$. In such cases, the entry $\qty(i,j)$  of $J_\text{macro}$, namely $\qty(J_{\xi(i)\xi(j)})_{m(i)m(j)}$, already exists due to the contribution of the nonlinear term $R_{\xi(i)k}$ through the derivative $\pdv{R_{\xi(i)k}}{\xi(j)_{m(j)}}$.  

            \end{proof}
            \Cref{prop:schur} implies that nonzero entries of $U_\text{src}D_\text{micro}^{-1}V_\text{bdry}$ are element-wise localized and thus sparse. From an implementation perspective, the full Jacobian $J$ does not need to be constructed explicitly. The elimination can be performed element by element (see \Cref{alg:FEA_battery_model}) to directly form $J/D_\text{micro}$. Therefore, the computational cost of this elimination process is relatively small.
            We may thus safely conclude that the proposed decoupling strategy significantly reduces memory consumption and accelerates the solution procedure.
              
            \begin{algorithm}[htbp]  
                \caption{Jacobian elimination}
                \label{alg:FEA_battery_model} 
                \begin{algorithmic}[1]
                \State \textbf{Input:}  Original sub-Jacobian $J_\text{macro}$ 
                \For{each element $\hat e_i$ in $\mathcal{T}_{2,h}$}
                    \State Compute $v_{ic_1}$, $v_{i\phi_1}$, $v_{i\phi_2}$ and $w_{ii}$
                    \For{each variable $\xi$ in $\set{c_1,\phi_1,\phi_2}$}
                        \State Get the index set of local degrees of freedom $\set{i_1,\dots,i_{P}}$
                        \For{each index $j$ in $\set{i_1,\dots,i_{P}}$}
                            \State Compute $\qty(u_{\xi i})_j$
                            \State $\qty(J_{\xi c_1},J_{\xi \phi_1},J_{\xi \phi_2})_j=\qty(J_{\xi c_1},J_{\xi \phi_1},J_{\xi \phi_2})_j -  \frac{1}{w_{ii}}\qty(u_{\xi i})_j\qty(v_{i c_1},v_{i\phi_1},v_{i \phi_2})$ 
                        \EndFor
                    \EndFor
                \EndFor
                \end{algorithmic}
            \end{algorithm}

            \subsection{An optional nonlinear Gauss-Seidel decomposition}
                In the DFN model, the source term \eqref{eq:def_J} is governed by the Butler-Volmer equation, which takes the following form:
                \begin{equation*}
                J_m = k_m c_1^{\alpha_{a,m}}\left(c_{2, \max}-\bar c_{2}\right)^{\alpha_{a,m}} \bar c_{2}^{\alpha_{c,m}}\qty(\exp\qty(\frac{\alpha_{a,m} F}{R T_0}\eta_m) - \exp\qty(\frac{-\alpha_{c,m} F}{R T_0}\eta_m)),
                \end{equation*}
                where $\alpha_{a,m} \in(0,1)$, $\alpha_{c,m} \in(0,1)$, $k_m$, $F$, $R$, $T_0$ are positive constants. The overpotential $\eta_m$ is defined as
                \begin{equation*}
                    \eta_m = \phi_2 - \phi_1 - U_m(\bar c_2).
                \end{equation*}

                As seen in equations of \eqref{eq:DFN_dfn_strong}, the system is coupled through $J$. Since the overpotential $\eta$ appears inside exponential functions, even small variations in $\phi_1$, $\phi_2$ or $\bar c_2$ can lead to large changes in $J$.
                This sensitivity can cause considerable fluctuations during each iteration and may lead to an excessive number of outer iteration steps, when an inappropriate nonlinear Gauss-Seidel decomposition is used to solve \eqref{eq:electrochemistry_step1-1_c1}--\eqref{eq:electrochemistry_step1-1_c2_scalar}.
                In fact, numerical experiments have confirmed this observation, supporting our conjecture.

                This behavior motivates us to introduce an optional nonlinear Gauss-Seidel decomposition in \textbf{Step 1}, designed to balance speed and memory usage.  In this strategy, the variables $\phi_1$, $\phi_2$ and $\bar c_2$ are solved simultaneously, followed by an outer iteration to couple them with the remaining variable $c_1$.
        
                \textbf{Step 1-Eta-0 (Initialization)}:  Set the initial value of iteration, $X^0:=\qty(c_{1 h}^{k,0}, \phi_{1 h}^{k,0}, \phi_{2 h}^{k,0}, \bar c_{2 h}^{k,0})$, typically as
                $$\phi_{1 h}^{k,0} = \phi_{1 h}^{k-1},\, \phi_{2 h}^{k,0} = \phi_{2 h}^{k-1},\, c_{1h}^{k,0} = c_{1h}^{k-1},\, \bar c^{k,0}_{2 h}=c_{2 h\Delta r}^{k-1}\qty(\cdot, R_\mathrm{s}\qty(\cdot)).$$ 
                Define the auxiliary quantities
                $$U_h^{k,0} = \sum_{m\in \set{\text{n,p}}} U_m(\bar c_{2h}^{k,0}) \boldsymbol{1}_{\Omega_m},\quad \eta_{h}^{k,0} = \phi_{2h}^{k,0} - \phi_{1h}^{k,0}- U_h^{k,0}.$$
                Let $rtol$ be the relative tolerance and set the iteration index $n=1$. 

                \textbf{Step 1-Eta-1 (Subproblem $\qty(c_1)$)}: Find $c_{1 h}^{k,n}\in V_h^{(1)}\left(\bar \Omega\right) $, such that for all $v_h \in  V_h^{(1)}\left(\bar \Omega\right) $,
                \begin{multline}\label{eq:electrochemistry_nl_coup_c1}
                    \int_{ \Omega} \varepsilon_1\frac{c_{1 h}^{k,n}-c_{1 h}^{k-1}}{\tau} v_h  \, \dd x +\int_{\Omega} k_1 \nabla c_{1 h}^{k,n} \cdot \nabla v_h \, \dd x \\ -\int_{\Omega_2} \qty(\sum_{m\in \set{\text{n,p}}}a_1  J_{m}\qty(c_{1h}^{k,n},\bar c_{2h}^{k,n-1},\eta_{h}^{k,n-1})\boldsymbol{1}_{\Omega_m}) v_h \, \dd x=0. 
                \end{multline}
                    
                \textbf{Step 1-Eta-2 (Subproblem $\qty(\phi_1,\phi_2,\bar c_{2})$)}: Find $\phi_{1 h}^{k,n} \in  W_h\left(\bar \Omega\right)$, $\phi_{2 h}^{k,n}\in V_h^{(1)}\left(\bar \Omega_2\right)$ and $c_{2h\Delta r,iN_i}^{k,n}$, $ i=1,\dots, M_2$, such that  for all $w_h \in  W_h\left(\bar \Omega\right) $, $ v_h \in V_h^{(1)}\left(\bar \Omega_2\right) $,
                \begin{equation}\label{eq:electrochemistry_opt_step2_phi1}
                    \int_{\Omega} \kappa_{1h}^{k,n}\nabla \phi_{1h}^{k,n} \cdot \nabla w_h\, \dd x - \int_{\Omega} \kappa_{2h}^{k,n}\nabla f\qty(c_{1h}^{k,n}) \cdot \nabla w_h\,\dd x - \int_{\Omega_2}  a_2  J_h^{k,n}  w_h \,\dd x =0,
                \end{equation} 
                \begin{equation}\label{eq:electrochemistry_opt_step2_phi2} 
                    \int_{\Omega_2} \sigma \nabla \phi_{2 h}^{k,n} \cdot \nabla v_{h}\,\dd x + \int_{\Omega_2}a_2  J_h^{k,n} v_{h}\, \dd x + \int_\Gamma I^k v_{h} \,\dd x  =0, 
                \end{equation} 
                \begin{equation}\label{eq:electrochemistry_opt_step2_c2_scalar}
                    c_{2h\Delta r,iN_i}^{k,n} +  e_{N_i}^{\mathrm{T}}A^{-1}e_{N_i} J_{h,i}^{k,n} - e_{N_i}^{\mathrm{T}}A^{-1}MC_i^{k-1} = 0, \; i=1,\dots, M_2.
                \end{equation}

                \textbf{Step 1-Eta-3 (Convergence Check)}: Set $X^n:=\qty(c_{1 h}^{k,n}, \phi_{1 h}^{k,n}, \phi_{2 h}^{k,n}, \bar c_{2 h }^{k,n})$. If $\norm{\frac{X^{n} - X^{n-1}}{{X^{n-1}}}}_{l^\infty} < rtol $, then terminate the iteration and set $\phi_{1 h}^{k} = \phi_{1 h}^{k,n}$, $\phi_{2 h}^{k} = \phi_{2 h}^{k,n}$, $c_{1h}^{k} = c_{1h}^{k,n}$, $\bar c_{2 h}^{k}=\bar c_{2 h}^{k,n}$. Otherwise, set $n=n+1$ and return to $\textbf{Step 1-Eta-1}$.
            \section{Numerical experiments}
            \label{sec:experiments}

                To validate the theoretical analysis and demonstrate the superiority of the proposed solver, we present numerical simulations of the DFN model using real battery parameters from \cite{smith_solid-state_2006,timms_asymptotic_pouch_2021}. These simulations were  implemented using an in-house finite element code based on the libMesh library \citep{libMeshPaper}. The computations were carried out on the high-performance computers of the State Key Laboratory of Scientific and Engineering Computing, Chinese Academy of Sciences. 

                \begin{remark}\label{rm:exp_conservation}
                    To preserve the conservation property \eqref{eq:intro_compatibility_condition} in the fully discrete setting, we must ensure the following condition holds:
                    $$
                        \int_{\Omega_2} a_2 J_h^k \, \dd x =  \int_{\Gamma} I^k \,\dd x =0.
                    $$
                    In our galvanostatic simulations, \( I^k \) is a prescribed constant, so the condition \( \int_{\Gamma} I^k \, \dd x = 0 \) is satisfied exactly, even when numerical quadrature is used to compute the integral\( \int_\Gamma I^k v_h\, \dd x \) in \eqref{eq:DFN_fem_phi2}. For more general cases, we propose the following projection to enforce exact conservation:
                    \[
                        I^k_*= I^k - \frac{1}{|\Gamma|} \int_{\Gamma} I^k \, \dd x.
                    \]
                    Alternatively, one may employ a sufficiently accurate quadrature rule to approximate the integral reliably. 
                    Since the scheme is fully implicit, the condition \( \int_{\Omega_2} a_2J_h^k \, \dd x = 0 \) is enforced indirectly via tight residual control in the nonlinear solver, thus maintaining conservation up to a high numerical tolerance in the sense of the chosen quadrature rule. 
                \end{remark}


                \begin{remark}
                    To compute $(\phi_{1h}^k, \phi_{2h}^k) \in W_h(\bar \Omega) \times V_h^1(\bar \Omega_2)$, we adopt the fixing one-point method, which is a practical variant of the null-space method. Specifically, we first solve the system in $V_h^1(\bar \Omega) \times V_h^1(\bar \Omega_2)$ by prescribing the value of $\phi_{2h}^k$ to be zero at a chosen node. Then, we shift the resulting solution into the desired space $W_h(\bar \Omega) \times V_h^1(\bar \Omega_2)$ by subtracting the mean of $\phi_{1h}^k$ over the domain:
                    \[
                    C = -\frac{1}{|\Omega|} \int_\Omega \phi_{1h}^k \, \mathrm{d}x.
                    \]
                    This procedure avoid constructing a basis for $W_h(\bar \Omega)$. In fact, the matrix system is assembled using the standard nodal basis of $V_h^1(\bar \Omega)$, and is obtained by simply deleting the row and column associated with the fixed point from the original singular matrix, as detailed in \cite[Sec.~5.2.1]{bochev_finite_2005}.
                \end{remark}

                \subsection{Convergence validation}
                Due to computational resource limitations, complete validation of the convergence order for the P3D model is reported here while partial results for the P4D model can be found in \ref{app_sec:DFN_exp_3d}.
                \begin{figure}[htbp]
                    \centering
                    \begin{subfigure}[b]{0.49\textwidth}
                        \includegraphics[width=\textwidth]{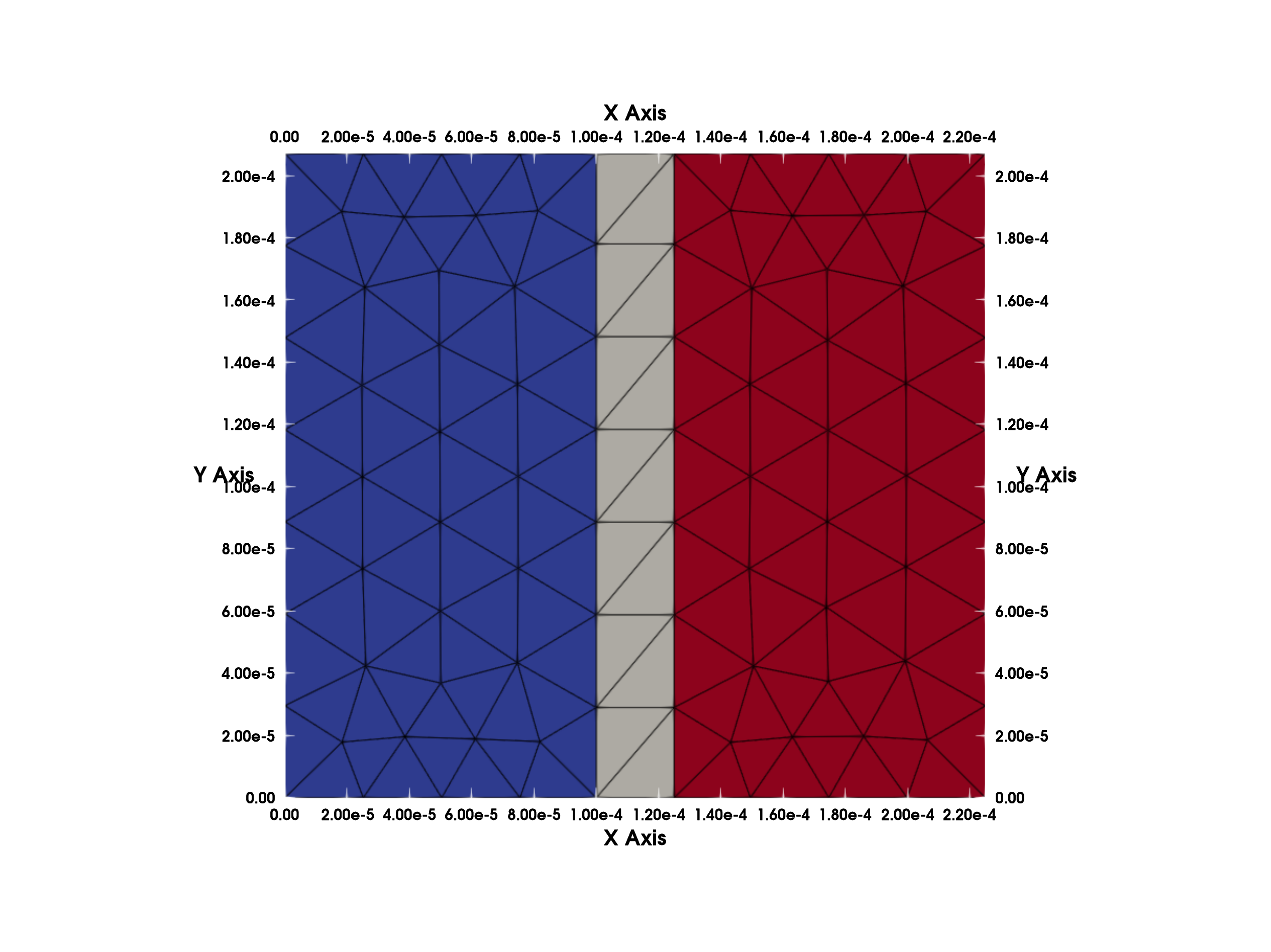}
                        \caption{}
                        \label{fig:DFN_exp_2d_mesh_r0} 
                    \end{subfigure}%
                    ~
                    \begin{subfigure}[b]{0.49\textwidth}
                        \includegraphics[width=\textwidth]{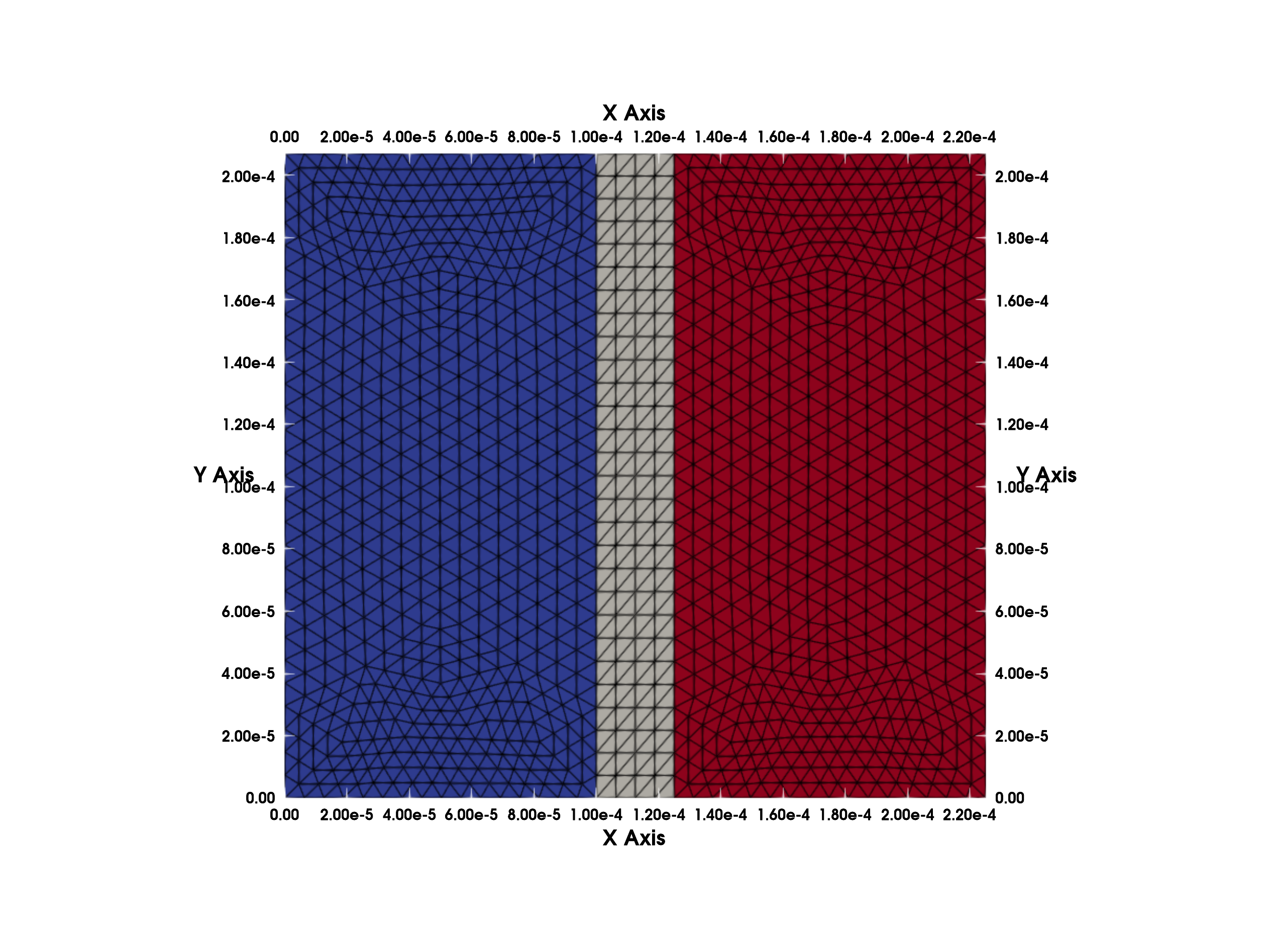}
                        \caption{} 
                        \label{fig:DFN_exp_2d_mesh_fine}
                    \end{subfigure} 
                    \caption{Spatial meshes for convergence verification. (a) Initial coarse mesh ($R_h=0$). (b) Uniformly refined mesh ($R_h=2$).}
                    \label{fig:DFN_exp_2d_mesh} 
                \end{figure}
        
                In this example, we set $\Omega_\mathrm{n} = \qty[0,100] \times \qty[0,207]$, $\Omega_\mathrm{s} = \qty[100,125] \times \qty[0,207]$, and $\Omega_\mathrm{p} = \qty[125,225] \times \qty[0,207]$ (all dimensions in $10^{-6}\mathrm{m}$). The boundary $\Gamma$ is defined as $\Gamma_\mathrm{n} \cup \Gamma_\mathrm{p}$, where $\Gamma_\mathrm{n} = \qty{0} \times \qty[0,207]$ and $\Gamma_\mathrm{p} = \qty{225} \times \qty[0,207]$. A 1C discharge rate is applied, using parameters from \cite{timms_asymptotic_pouch_2021}.
                
                The initial spatial mesh, shown in Figure~\ref{fig:DFN_exp_2d_mesh_r0}, is refined uniformly with level $R_h$. 
                The radial grid is initially uniform, with grid size $\Delta r = 1.25 \times 10^{-6}\mathrm{m}$ and refinement level $R_{\Delta r}$.
                Since the exact solution is unknown, we use the finite element solution on a very fine mesh ($R_h=5$, $R_{\Delta r} = 5$) with a sufficiently small step size $\tau_\mathrm{ref} = 0.0390625 \text{s}$  as a reference. 
                
                Figure~\ref{fig:DFN_exp_2d_err_tau} illustrates the absolute errors as a function of the time step size $\tau$ for all variables, evaluated at $T = 1.25 s$ with $R_h=5$ and $R_{\Delta r} = 5$ fixed. For fixed $R_{\Delta r} = 5$ ($R_{h} = 5$) and $\tau = \tau_\mathrm{ref}$, we refine the initial mesh from $R_h=1$ ($R_{\Delta r}=1$) to $R_h=3$ ($R_{\Delta r}=3$). The error and convergence order with respect to $h$ ($\Delta r$) at time $t_k = k\Delta t$ are summarized in Table~\ref{tab:DFN_exp_err_h} (Table~\ref{tab:DFN_exp_err_r}). The observed convergence rates agree with our theoretical analysis.
                
                \begin{figure}[htbp]
                    \centering
                    \begin{subfigure}[b]{0.49\textwidth}
                        \includegraphics[width=\textwidth]{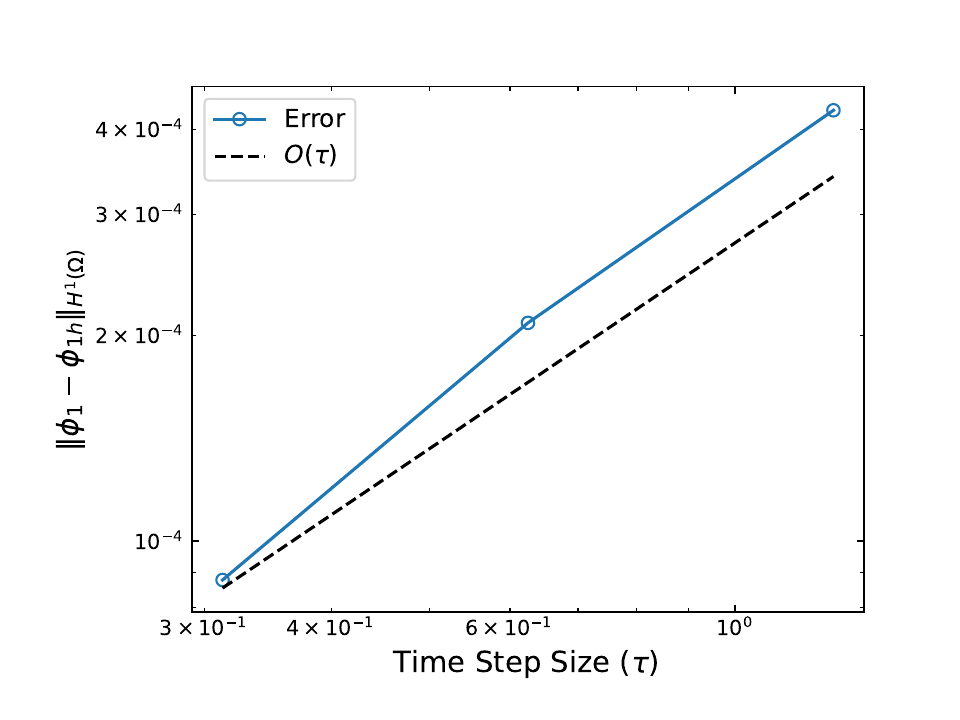}
                        \caption{} 
                        \label{fig:DFN_exp_2d_err_tau_phi1}  
                    \end{subfigure}%
                    ~
                    \begin{subfigure}[b]{0.49\textwidth}
                        \includegraphics[width=\textwidth]{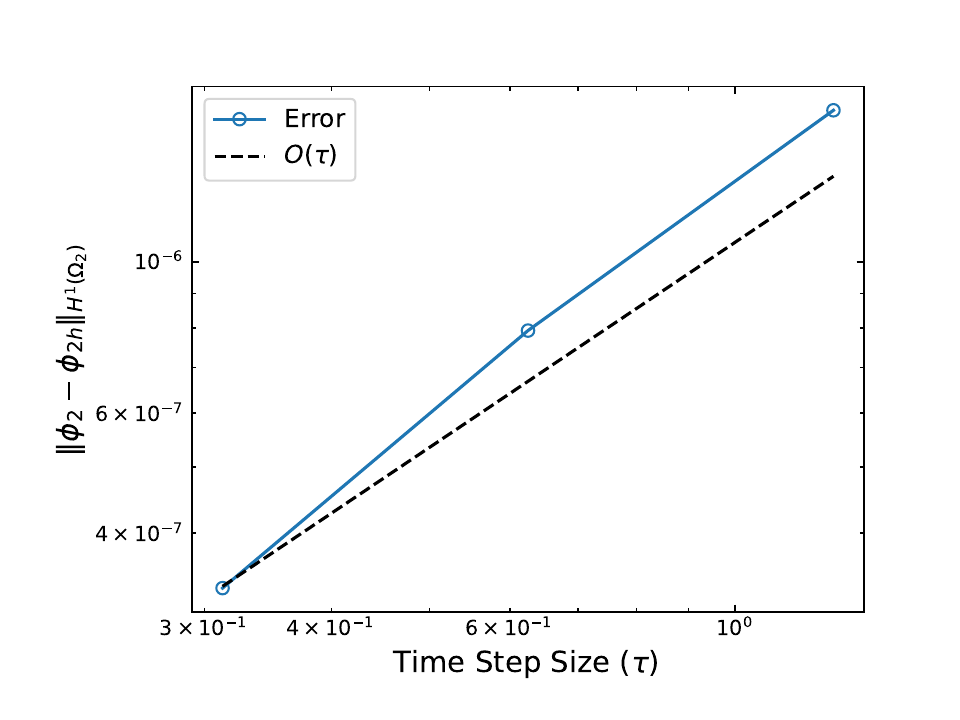}
                        \caption{} 
                        \label{fig:DFN_exp_2d_err_tau_phi2}
                    \end{subfigure} 
                    \begin{subfigure}[b]{0.49\textwidth}
                        \includegraphics[width=\textwidth]{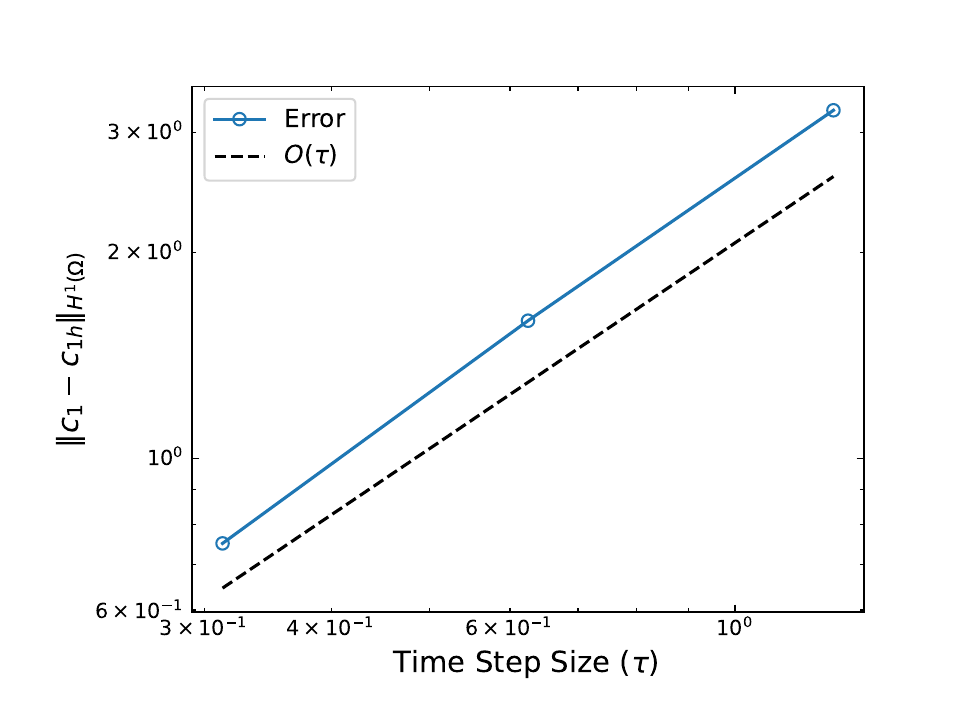}
                        \caption{}
                        \label{fig:DFN_exp_2d_err_tau_c1} 
                    \end{subfigure}%
                    ~
                    \begin{subfigure}[b]{0.49\textwidth}
                        \includegraphics[width=\textwidth]{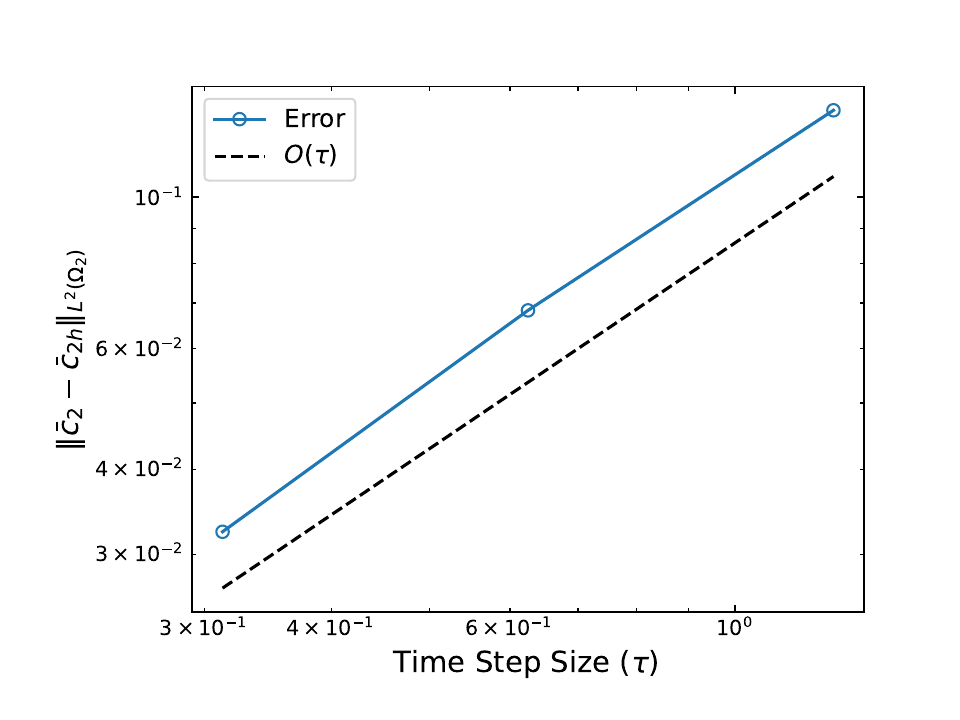}
                        \caption{} 
                        \label{fig:DFN_exp_2d_err_tau_c2_bar}
                    \end{subfigure} 
                    \begin{subfigure}[b]{0.49\textwidth}
                        \includegraphics[width=\textwidth]{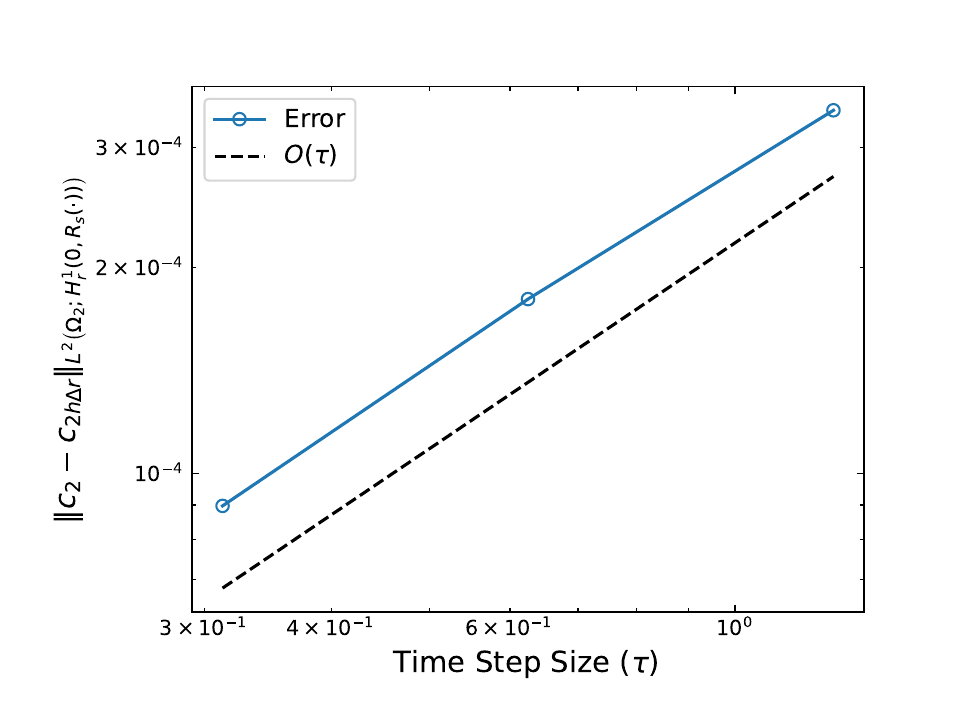}
                        \caption{}
                        \label{fig:DFN_exp_2d_err_tau_c2_H1r} 
                    \end{subfigure}%
                    ~
                    \begin{subfigure}[b]{0.49\textwidth}
                        \includegraphics[width=\textwidth]{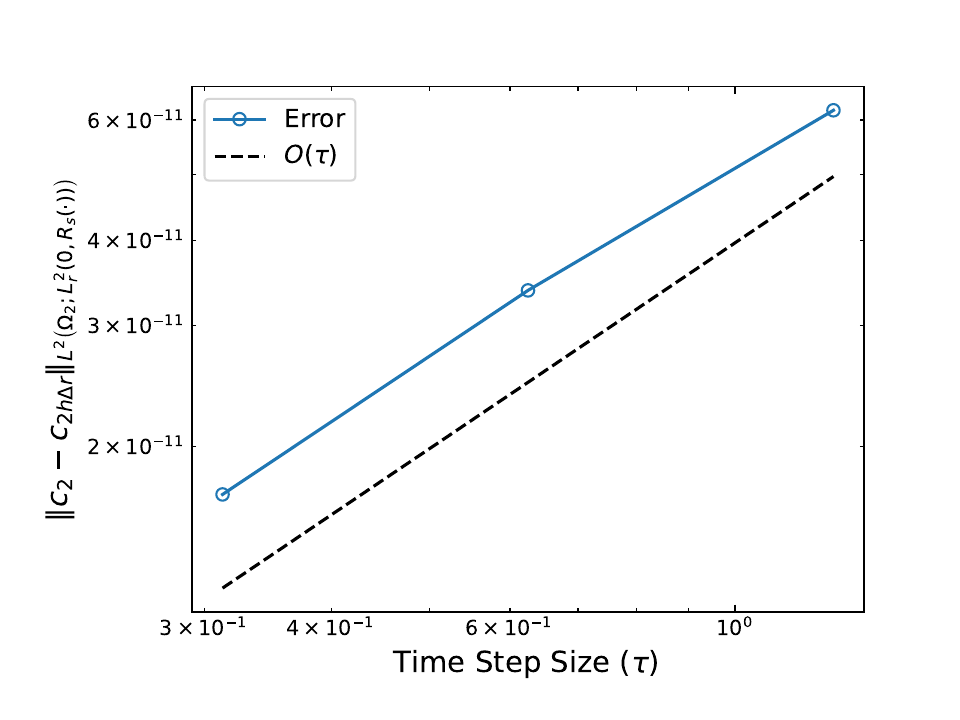}
                        \caption{} 
                        \label{fig:DFN_exp_2d_err_tau_c2_L2r}
                    \end{subfigure} 
                    \caption{Time convergence obtained at $t = 1.25\ s$. (a) Error for $\phi_1$ in the $H^1$-norm. (b) Error for $\phi_2$ in the $H^1$-norm. (c) Error for $c_1$ in the $H^1$-norm. (d) Error for $\bar c_2$ in the $L^2$-norm. (e) Error for $c_2$ in the $L^2\qty(H_r^1)$-norm. (f) Error for $c_2$ in the $L^2\qty(L_r^2)$-norm.}
                    \label{fig:DFN_exp_2d_err_tau} 
                \end{figure}

                \begin{table}[htbp]
                    \centering
                    \footnotesize
                    \caption{Error and convergence order for $h$.}\label{tab:DFN_exp_err_h}
                    \setlength{\tabcolsep}{4pt}
                    \renewcommand{\arraystretch}{1.2}
                    \begin{subtable}{0.49\textwidth}
                        \centering
                        \subcaption{$\norm{\phi_1(\cdot,t_k) - \phi_{1h}^k}_{H^1\qty(\Omega)}$}
                        \label{tab:DFN_exp_err_h_phi1_H1}
                        \begin{tabular}{ccccc}
                            \toprule
                            $k$ & $R_h=1$ & $R_h=2$ & $R_h=3$ & Order \\
                            \midrule
                            2 & 9.90E-04 & 4.92E-04 & 2.40E-04 & 1.04 \\
                            4 & 9.90E-04 & 4.90E-04 & 2.39E-04 & 1.04 \\
                            6 & 1.02E-03 & 5.06E-04 & 2.47E-04 & 1.04 \\
                            8 & 1.06E-03 & 5.25E-04 & 2.56E-04 & 1.04 \\
                            10 & 1.05E-03 & 5.19E-04 & 2.53E-04 & 1.04 \\
                            \bottomrule
                            \end{tabular}                        
                    \end{subtable}
                    \begin{subtable}{0.49\textwidth}
                        \centering
                        \subcaption{$\norm{\phi_2(\cdot,t_k) - \phi_{2h}^k}_{H^1\qty(\Omega_2)}$}
                        \label{tab:DFN_exp_err_h_phi2_H1}
                        \begin{tabular}{ccccc}
                            \toprule
                            $k$ & $R_h=1$ & $R_h=2$ & $R_h=3$ & Order \\
                            \midrule
                            2 & 3.32E-05 & 1.65E-05 & 8.06E-06 & 1.03 \\
                            4 & 3.32E-05 & 1.65E-05 & 8.06E-06 & 1.03 \\
                            6 & 3.32E-05 & 1.65E-05 & 8.06E-06 & 1.03 \\
                            8 & 3.32E-05 & 1.65E-05 & 8.06E-06 & 1.03 \\
                            10 & 3.32E-05 & 1.65E-05 & 8.06E-06 & 1.03 \\
                            \bottomrule
                        \end{tabular}                
                    \end{subtable}
                    \begin{subtable}{0.49\textwidth}
                        \centering
                        \subcaption{$\norm{c_1(\cdot,t_k) - c_{1h}^k}_{H^1\qty(\Omega)}$}
                        \label{tab:DFN_exp_err_h_c1_H1}
                        \setlength{\tabcolsep}{2.6pt}
                        \begin{tabular}{ccccc}
                            \toprule
                            $k$ & $R_h=1$ & $R_h=2$ & $R_h=3$ & Order \\
                            \midrule
                            2 & 4.24E+00 & 2.25E+00 & 1.11E+00 & 1.02 \\
                            4 & 4.70E+00 & 2.37E+00 & 1.16E+00 & 1.03 \\
                            6 & 5.70E+00 & 2.89E+00 & 1.42E+00 & 1.03 \\
                            8 & 6.80E+00 & 3.42E+00 & 1.68E+00 & 1.03 \\
                            10 & 6.83E+00 & 3.40E+00 & 1.66E+00 & 1.04 \\
                            \bottomrule
                        \end{tabular}
                    \end{subtable}
                    \begin{subtable}{0.49\textwidth}
                        \centering
                        \subcaption{$\norm{\bar c_2(\cdot,t_k) - \bar c_{2h}^k}_{L^2\qty(\Omega_2)}$}
                        \label{tab:DFN_exp_err_h_c2_surf_L2}
                        \begin{tabular}{ccccc}
                            \toprule
                            $k$ & $R_h=1$ & $R_h=2$ & $R_h=3$ & Order \\
                            \midrule
                            2 & 1.36E-02 & 6.77E-03 & 3.31E-03 & 1.04 \\
                            4 & 2.96E-03 & 1.47E-03 & 7.18E-04 & 1.04 \\
                            6 & 7.91E-03 & 3.94E-03 & 1.92E-03 & 1.03 \\
                            8 & 1.73E-02 & 8.63E-03 & 4.21E-03 & 1.04 \\
                            10 & 1.21E-02 & 5.99E-03 & 2.92E-03 & 1.03 \\
                            \bottomrule
                            \end{tabular}                        
                    \end{subtable}
                    \begin{subtable}{0.49\textwidth}
                        \centering
                        \subcaption{$\norm{c_2(\cdot,t_k) - c_{2h\Delta r}^k}_{L^2\qty(\Omega_2;H^1_r\qty(0,R_\text{s}\qty(\cdot)))}$}
                        \label{tab:DFN_exp_err_h_c2_H1r}
                        \begin{tabular}{ccccc}
                            \toprule
                            $k$ & $R_h=1$ & $R_h=2$ & $R_h=3$ & Order \\
                            \midrule
                            2 & 2.45E-05 & 1.22E-05 & 5.94E-06 & 1.03 \\
                            4 & 9.85E-06 & 4.89E-06 & 2.39E-06 & 1.04 \\
                            6 & 1.16E-05 & 5.81E-06 & 2.84E-06 & 1.03 \\
                            8 & 2.71E-05 & 1.36E-05 & 6.62E-06 & 1.04 \\
                            10 & 1.86E-05 & 9.26E-06 & 4.52E-06 & 1.03 \\
                            \bottomrule
                        \end{tabular}
                    \end{subtable}
                    \begin{subtable}{0.49\textwidth}
                        \centering
                        \subcaption{$\norm{c_2(\cdot,t_k) - c_{2h\Delta r}^k}_{L^2\qty(\Omega_2;L^2_r\qty(0,R_\text{s}\qty(\cdot)))}$}
                        \label{tab:DFN_exp_err_h_c2_L2r}
                        \begin{tabular}{ccccc}
                            \toprule
                            $k$ & $R_h=1$ & $R_h=2$ & $R_h=3$ & Order \\
                            \midrule
                            2 & 3.44E-12 & 1.71E-12 & 8.35E-13 & 1.04 \\
                            4 & 2.01E-12 & 9.97E-13 & 4.87E-13 & 1.04 \\
                            6 & 2.43E-12 & 1.21E-12 & 5.91E-13 & 1.03 \\
                            8 & 4.85E-12 & 2.42E-12 & 1.18E-12 & 1.04 \\
                            10 & 4.59E-12 & 2.28E-12 & 1.11E-12 & 1.03 \\
                            \bottomrule
                            \end{tabular}                        
                    \end{subtable}
                \end{table}
        
                \begin{table}[htbp]
                    \centering
                    \footnotesize
                    \caption{Error and convergence order for $r$.}\label{tab:DFN_exp_err_r}
                    \setlength{\tabcolsep}{4pt}
                    \renewcommand{\arraystretch}{1.2}
                    \begin{subtable}{0.49\textwidth}
                        \centering
                        \subcaption{$\norm{\phi_1(\cdot,t_k) - \phi_{1h}^k}_{H^1\qty(\Omega)}$}
                        \label{tab:DFN_exp_err_r_phi1_H1}
                        \begin{tabular}{ccccc}
                            \toprule
                            $k$ & $R_h=1$ & $R_h=2$ & $R_h=3$ & Order \\
                            \midrule
                            2 & 1.33E-05 & 3.04E-06 & 7.09E-07 & 2.10 \\
                            4 & 1.86E-05 & 4.58E-06 & 1.08E-06 & 2.08 \\
                            6 & 5.07E-06 & 1.32E-06 & 3.15E-07 & 2.06 \\
                            8 & 2.60E-06 & 6.31E-07 & 1.50E-07 & 2.07 \\
                            10 & 4.99E-06 & 1.31E-06 & 3.14E-07 & 2.06 \\
                            \bottomrule
                            \end{tabular}                        
                    \end{subtable}
                    \begin{subtable}{0.49\textwidth}
                        \centering
                        \subcaption{$\norm{\phi_2(\cdot,t_k) - \phi_{2h}^k}_{H^1\qty(\Omega_2)}$}
                        \label{tab:DFN_exp_err_r_phi2_H1}
                        \begin{tabular}{ccccc}
                            \toprule
                            $k$ & $R_h=1$ & $R_h=2$ & $R_h=3$ & Order \\
                            \midrule
                            2 & 5.52E-08 & 1.28E-08 & 2.99E-09 & 2.10 \\
                            4 & 7.75E-08 & 1.89E-08 & 4.46E-09 & 2.08 \\
                            6 & 2.10E-08 & 5.40E-09 & 1.29E-09 & 2.07 \\
                            8 & 1.01E-08 & 2.44E-09 & 5.80E-10 & 2.07 \\
                            10 & 1.89E-08 & 4.98E-09 & 1.19E-09 & 2.06 \\
                            \bottomrule
                        \end{tabular}                
                    \end{subtable}
                    \begin{subtable}{0.49\textwidth}
                        \centering
                        \subcaption{$\norm{c_1(\cdot,t_k) - c_{1h}^k}_{H^1\qty(\Omega)}$}
                        \label{tab:DFN_exp_err_r_c1_H1}
                        \begin{tabular}{ccccc}
                            \toprule
                            $k$ & $R_h=1$ & $R_h=2$ & $R_h=3$ & Order \\
                            \midrule
                            2 & 1.87E-02 & 4.34E-03 & 1.02E-03 & 2.10 \\
                            4 & 9.75E-03 & 2.49E-03 & 5.93E-04 & 2.07 \\
                            6 & 2.15E-03 & 5.88E-04 & 1.42E-04 & 2.05 \\
                            8 & 8.44E-03 & 2.03E-03 & 4.84E-04 & 2.07 \\
                            10 & 1.09E-02 & 2.75E-03 & 6.57E-04 & 2.07 \\
                            \bottomrule
                        \end{tabular}
                    \end{subtable}
                    \begin{subtable}{0.49\textwidth}
                        \centering
                        \subcaption{$\norm{\bar c_2(\cdot,t_k) - \bar c_{2h}^k}_{L^2\qty(\Omega_2)}$}
                        \label{tab:DFN_exp_err_r_c2_surf_L2}
                        \begin{tabular}{ccccc}
                            \toprule
                            $k$ & $R_h=1$ & $R_h=2$ & $R_h=3$ & Order \\
                            \midrule
                            2 & 4.20E-03 & 1.01E-03 & 2.38E-04 & 2.08 \\
                            4 & 1.94E-03 & 4.41E-04 & 1.05E-04 & 2.08 \\
                            6 & 1.38E-03 & 3.25E-04 & 7.74E-05 & 2.07 \\
                            8 & 1.08E-03 & 2.54E-04 & 6.05E-05 & 2.07 \\
                            10 & 9.64E-04 & 2.28E-04 & 5.44E-05 & 2.07 \\
                            \bottomrule
                            \end{tabular}                        
                    \end{subtable}
                    \begin{subtable}{0.49\textwidth}
                        \centering
                        \subcaption{$\norm{c_2(\cdot,t_k) - c_{2h\Delta r}^k}_{L^2\qty(\Omega_2;H^1_r\qty(0,R_\text{s}\qty(\cdot)))}$}
                        \label{tab:DFN_exp_err_r_c2_H1r}
                        \begin{tabular}{ccccc}
                            \toprule
                            $k$ & $R_h=1$ & $R_h=2$ & $R_h=3$ & Order \\
                            \midrule
                            2 & 1.72E-04 & 8.55E-05 & 4.17E-05 & 1.04 \\
                            4 & 1.33E-04 & 6.60E-05 & 3.22E-05 & 1.03 \\
                            6 & 1.09E-04 & 5.44E-05 & 2.65E-05 & 1.03 \\
                            8 & 9.85E-05 & 4.90E-05 & 2.39E-05 & 1.03 \\
                            10 & 8.94E-05 & 4.46E-05 & 2.18E-05 & 1.03 \\
                            \bottomrule
                        \end{tabular}
                    \end{subtable}
                    \begin{subtable}{0.49\textwidth}
                        \centering
                        \subcaption{$\norm{c_2(\cdot,t_k) - c_{2h\Delta r}^k}_{L^2\qty(\Omega_2;L^2_r\qty(0,R_\text{s}\qty(\cdot)))}$}
                        \label{tab:DFN_exp_err_r_c2_L2r}
                        \begin{tabular}{ccccc}
                            \toprule
                            $k$ & $R_h=1$ & $R_h=2$ & $R_h=3$ & Order \\
                            \midrule
                            2 & 2.01E-12 & 5.02E-13 & 1.23E-13 & 2.04 \\
                            4 & 1.69E-12 & 4.26E-13 & 1.03E-13 & 2.04 \\
                            6 & 1.48E-12 & 3.71E-13 & 9.00E-14 & 2.04 \\
                            8 & 1.41E-12 & 3.52E-13 & 8.51E-14 & 2.05 \\
                            10 & 1.34E-12 & 3.35E-13 & 8.10E-14 & 2.05 \\
                            \bottomrule
                            \end{tabular}                        
                    \end{subtable} 
                \end{table}
        
                \subsection{Performance comparison}\label{subsec:exp_solver}
                    In this section, we compare the performance of several solvers, including the newly proposed fully coupled solver (referred to as "2DS-FC"), the solver with the optional nonlinear Gauss-Seidel (NGS) decomposition ("2DS-Eta") and the solver only combining the first decoupling with the optional NGS decomposition ("1DS-Eta"). The 2DS-Eta solver is designed to balance solution speed and memory overhead, while 1DS-Eta serves as a baseline to evaluate the impact of Jacobian elimination. We also benchmark these solvers against existing solvers outlined in \ref{app_sec:solvers} — "GSN-Macro", "GSN-Phi", "GSN-FD" — and the fully coupled solver without any decoupling ("GSN-FC"). 
                    
                    The comparison was conducted in the P4D setting with the geometry and battery parameters provided in \ref{app_sec:DFN_exp_3d}. We considered a simulation time of $T=20$s and a uniform time step size of $\tau = 0.1$s. The grids used for discretization in the radial direction were $\set{1-\frac{1}{2^n}}_{n=1}^{9}\cup \set{0,1}$, with dimensions in $10^{-6}\mathrm{m}$. All nonlinear (sub)problems were solved to high accuracy using the Newton's method with an absolute iteration tolerance of 1e-13. The default line search algorithm in PETSc \cite{petsc-user-ref} was employed to ensure robust convergence. The linear system at each Newton step was solved using SuperLU\_DIST \cite{SuperLU-DIST}, a distributed-memory sparse direct solver for large sets of linear equations. To enhance computational efficiency, we utilized parallel computing with 252 processes.

                    We complement the review of existing solvers in Section~\ref{sec:intro} with the numerical results presented in Table~\ref{tab:electrochemistry_exp_solver_cmp}.  
                    While the GSN-FD solver is attractive due to its simplicity, it proves inefficient in terms of speed and robustness due to the strong coupling and nonlinearity of the system. 
                    Moreover, the comparison between the GSN-Phi and GSN-Macro solvers indicates that a higher degree of coupling does not necessarily result in increased efficiency. 
                    The superior speed performance of the GSN-FC solver explains its adoption in some software that advertise fast solutions. However, it quickly reaches memory limitations as the problem size increases.
                     
                    Numerical results demonstrate that the newly proposed solver 2DS-FC significantly outperforms its competitors, nearly doubling the speed of the fastest existing solver. Additionally, its maximum memory usage remains effectively bounded, similar to the GSN-Macro solver.  Thanks to the novel NGS decomposition,  the number of outer iterations is halved in the 1DS-Eta and 2DS-Eta solvers, with the reduction being contingent on a better understanding of the problem's physics and the strength of coupling between the governing equations. It is worth noting that Jacobian elimination plays a critical role in the 2DS-Eta, accelerating the 1DS-Eta by reducing the matrix order. This reduction is particularly valuable when using tetrahedral meshes, where the number of elements can be approximately five times the number of nodes. Therefore, the 2DS-Eta solver appears to be a good choice when there are stricter memory constraints.


                    \begin{table}[htbp]
                        \caption{\enspace Performance Comparison Table}
                        \label{tab:electrochemistry_exp_solver_cmp}
                        \begin{subtable}{\textwidth}
                            \caption{\enspace Parallel running time (hours)}
                            \label{tab:electrochemistry_time_mpi}
                            \centering
                            \footnotesize
                            \setlength{\tabcolsep}{1.8pt}
                            \renewcommand{\arraystretch}{1.2}
                            \begin{tabular}{ccccccccc}
                                \toprule
                                \#Nodes & \#Elems & GSN-FD & GSN-Phi & GSN-Macro & GSN-FC  & 1DS-Eta & 2DS-Eta & 2DS-FC \\
                                \midrule
                                510 & 2135 & 0.14 & 0.07 & 0.08 & 0.03 & 0.06 & 0.05 & 0.02\\ 
                                3424 & 17080 & 0.81 & 0.33 & 0.51 & 0.23 & 0.40 & 0.22 & 0.10\\ 
                                25007 & 136640 & 7.27 & 2.61 & 4.34 & 1.78 & 2.83 & 1.76 & 0.79\\ 
                                190973 & 1093120 & 95.67 & 30.67 & 56.36 & 20.79 & 36.35 & 17.74 & 10.11\\ 
                                \bottomrule
                            \end{tabular}
                        \end{subtable}
                        \begin{subtable}{\textwidth}
                            \caption{\enspace Total CPU time (hours)}
                            \label{tab:electrochemistry_time_cpu_sum}
                            \centering
                            \footnotesize
                            \setlength{\tabcolsep}{1.8pt}
                            \renewcommand{\arraystretch}{1.2}
                            \begin{tabular}{ccccccccc}
                                \toprule
                                \#Nodes & \#Elems & GSN-FD & GSN-Phi & GSN-Macro & GSN-FC  & 1DS-Eta & 2DS-Eta & 2DS-FC \\
                                \midrule
                                510 & 2135 & 34.84 & 17.22 & 34.45 & 6.99 & 15.03 & 12.76 & 4.39\\
                                3424 & 17080 & 204.41 & 81.38 & 128.11 & 57.69 & 99.50 & 54.17 & 24.06\\
                                25007 & 136640 & 1826.68 & 655.10 & 1090.04 & 446.84 & 712.10 & 441.89 & 198.71\\
                                190973 & 1093120 & 24043.68 & 7707.35 & 14162.93 & 5224.92 & 9132.94 & 4457.45 & 2540.35\\
                                \bottomrule
                            \end{tabular}
                        \end{subtable}
                        \begin{subtable}{\textwidth}
                            \caption{\enspace Average number of outer iterations}
                            \label{tab:electrochemistry_cnt_its}
                            \centering
                            \footnotesize
                            \setlength{\tabcolsep}{1.8pt}
                            \renewcommand{\arraystretch}{1.2}
                            \begin{tabular}{ccccccccc}
                                \toprule
                                \#Nodes & \#Elems & GSN-FD & GSN-Phi & GSN-Macro & GSN-FC  & 1DS-Eta & 2DS-Eta & 2DS-FC \\
                                \midrule
                                510 & 2135 & 19.06 & 10.13 & 9.80 & 1.00 & 5.33 & 5.38 & 1.00\\
                                3424 & 17080 & 29.15 & 10.22 & 9.87 & 1.00 & 4.73 & 4.73 & 1.00\\
                                25007 & 136640 & 34.29 & 10.09 & 9.81 & 1.00 & 4.64 & 4.64 & 1.00\\
                                190973 & 1093120 & 36.18 & 9.96 & 9.72 & 1.00 & 4.86 & 4.94 & 1.00\\
                                \bottomrule
                            \end{tabular}
                        \end{subtable}
                        \begin{subtable}{\textwidth}
                            \caption{\enspace Max memory usage (GB)}
                            \label{tab:electrochemistry_memory}
                            \centering
                            \footnotesize
                            \setlength{\tabcolsep}{1.8pt}
                            \renewcommand{\arraystretch}{1.2}
                            \begin{tabular}{ccccccccc}
                                \toprule
                                \#Nodes & \#Elems & GSN-FD & GSN-Phi & GSN-Macro & GSN-FC  & 1DS-Eta & 2DS-Eta & 2DS-FC \\
                                \midrule
                                510 & 2135 & 16.65 & 16.83 & 16.68 & 16.73 & 17.04 & 16.97 & 16.12\\
                                3424 & 17080 & 23.25 & 22.98 & 24.13 & 30.32 & 24.93 & 23.31 & 23.44\\
                                25007 & 136640 & 63.25 & 69.81 & 79.28 & 137.85 & 84.13 & 69.99 & 75.92\\
                                190973 & 1093120 & 390.10 & 438.00 & 532.02 & 985.57 & 552.79 & 471.34 & 532.79\\
                                \bottomrule
                            \end{tabular}
                        \end{subtable}
                    \end{table}
        
            \section{Conclusions}\label{sec:conclusions} 
        
                In this paper, we first present an error analysis of the backward Euler finite element discretization for the DFN model of lithium-ion cells. Building on the multiscale projection from  \cite{xu2024DFNsemiFEM}, we establish the optimal convergence rates in $N$-dimensions ($1\le N\le 3$) for the first time. 
                
                We then propose a novel and highly efficient solver that that reduces computational complexity through two decoupling procedures. The first decoupling reduce the dimension from $N+1$ to $N$ via local inversion, while the second reduce the order of Jacobian, exploiting the Jacobian's reducibility. To further enhance the balance between speed and memory usage, we introduce an optional nonlinear Gauss-Seidel decomposition, which adapts to the problem's physics and the extent of coupling between variables.
                
                Numerical experiments with real battery parameters validate our theoretical results and demonstrate the exceptional performance of the proposed solver. A comprehensive comparison shows that the 2DS solver is the fastest among existing solvers and exhibits robust performance, being largely insensitive to finer microscopic discretization.  Additionally, it can be easily combined with other nonlinear Gauss-Seidel decompositions for further optimization.
                
                The DFN model is the most widely used physics-based model for lithium-ion cells and a cornerstone of battery multi-physics modeling. Our theoretical analysis  ensures the reliability of the numerical scheme, while the efficient solver can be applied in fields requiring high accuracy and fast computations, such as real-time battery monitoring, estimation, or battery design optimization.
                
\appendix
\section{Function spaces and notations}\label{app_sec:func_space}

    Let $H^m(\Omega)(m \in \mathbb{N}_0)$ denote the Sobolev spaces $W^{m,2}\qty(\Omega)$ defined on a generic bounded domain $\Omega$ with the norm $\norm{\cdot}_{m,\Omega} = \norm{\cdot}_{W^{m,2}\qty(\Omega)}$. We also denote the subspace of $H^1\qty(\Omega)$, consisting of functions whose integral is zero, by $$H^1_*\qty(\Omega) := \qty{ v \in H^1\qty(\Omega):\: \int_\Omega v\, \dd x = 0}.$$
    Since multiple domains $\Omega_\mathrm{n}$, $\Omega_\mathrm{s}$ and $\Omega_\mathrm{p}$ are involved, it is necessary to further define piecewise Sobolev spaces 
    \begin{displaymath}
        H^m_\mathrm{pw}\qty(\Omega_1) =  H^m\qty(\Omega_\mathrm{n}) \cap H^m\qty(\Omega_\mathrm{s}) \cap H^m\qty(\Omega_\mathrm{p})
    \end{displaymath}
    equipped with the norm $\norm{\cdot}_{m,\Omega_1}:= \norm{\cdot}_{m,\Omega_\mathrm{n} } + \norm{\cdot}_{m,\Omega_\mathrm{s} } + \norm{\cdot}_{m,\Omega_\mathrm{p}}$,
    and 
    \begin{displaymath}
        H^m_\mathrm{pw}\qty(\Omega_2) =   H^m\qty(\Omega_\mathrm{n}) \cap H^m\qty(\Omega_\mathrm{p})
    \end{displaymath}
    with $\norm{\cdot}_{m,\Omega_2}:= \norm{\cdot}_{m,\Omega_\mathrm{n}} + \norm{\cdot}_{m,\Omega_\mathrm{p}}$.

    The natural space for radial solutions with radial coordinate $r$ is $L^2_r\qty(0,R)$, consisting of measurable functions $v$ defined on $\qty(0,R)$ such that $vr$ is $L^2$-integrable. It is obvious that $L^2_r\qty(0,R)$ is a Hilbert space and can be endowed with the norm 
    \begin{displaymath}
        \norm{v}_{L^2_r\qty(0,R)} = \qty( \int_{0}^R \qty|v(r)|^2r^2 \, \dd r )^\frac{1}{2}.
    \end{displaymath}
    We also denote by $H^m_r\qty(0,R)(m \in \mathbb{N}_0)$ the Hilbert spaces of measurable functions $v$, whose distribution derivatives belong to $L^2_r\qty(0,R)$ up to order $m$, with the norm 
    \begin{equation}
        \norm{v}_{H^m_r\qty(0,R)} = \qty( \sum_{k=0}^{m} \norm{\dv[k]{v}{r}}^2_{L^2_r\qty(0,R)})^\frac{1}{2}.
    \end{equation}
    For functions in $H^1_r\qty(0,R)$, we have the following critical trace estimation in \cite{bermejo_numerical_2021}:
    \begin{proposition}\label{prop:DFN_radial_surface}
        There exist an arbitrarily small number $\epsilon$ and a positive (possibly large) constant $C(\epsilon)$, such that for $u \in H^1_r\qty(0,R)$,
        \begin{displaymath}  
            \left|u\left(R\right)\right| 
            \leq \epsilon\left\|\frac{\partial u}{\partial r}\right\|_{L_r^2\left(0, R\right)}  + 
            C( \epsilon)\left\|u\right\|_{L_r^2\left(0, R\right)}.
        \end{displaymath}
    \end{proposition}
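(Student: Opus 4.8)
The plan is to reduce this weighted trace inequality to the fundamental theorem of calculus, exploiting that the weight $r^{2}$ is bounded away from $0$ near the endpoint $r=R$. First I would observe that any $u\in H^1_r(0,R)$ is locally absolutely continuous on $(0,R]$: since $\partial_r u\in L^2_r(0,R)$ we have $\partial_r u\in L^1(\delta,R)$ for every $\delta>0$, so the trace $u(R)$ is well defined, and by the chain rule for absolutely continuous functions, for a.e.\ $s\in(0,R)$
\[
  u(R)^2-u(s)^2=\int_s^R 2\,u(t)\,\partial_r u(t)\,\dd t .
\]

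Next I would multiply this identity by $s^{2}$ and integrate in $s$ over $(0,R)$. The left-hand side becomes $\tfrac{R^{3}}{3}u(R)^2-\|u\|_{L^2_r(0,R)}^2$, and, since the resulting double integral is absolutely convergent (as Cauchy--Schwarz shows), Fubini's theorem turns the right-hand side into $\tfrac23\int_0^R u(t)\,\partial_r u(t)\,t^{3}\,\dd t$. Using $t^{3}\le R\,t\cdot t$ together with the weighted Cauchy--Schwarz inequality gives $\bigl|\tfrac23\int_0^R u\,\partial_r u\,t^{3}\,\dd t\bigr|\le \tfrac{2R}{3}\|u\|_{L^2_r(0,R)}\|\partial_r u\|_{L^2_r(0,R)}$, whence
\[
  u(R)^2\le \frac{3}{R^{3}}\,\|u\|_{L^2_r(0,R)}^2+\frac{2}{R^{2}}\,\|u\|_{L^2_r(0,R)}\,\|\partial_r u\|_{L^2_r(0,R)} .
\]
Taking square roots, using $\sqrt{a+b}\le\sqrt a+\sqrt b$, and then applying Young's inequality $\|u\|^{1/2}\|\partial_r u\|^{1/2}\le \delta\|\partial_r u\|+\tfrac1{4\delta}\|u\|$ yields, with $\epsilon:=\tfrac{\sqrt2}{R}\delta$ (which sweeps out all sufficiently small positive values as $\delta\downarrow 0$),
\[
  |u(R)|\le \epsilon\left\|\frac{\partial u}{\partial r}\right\|_{L^2_r(0,R)}+C(\epsilon)\left\|u\right\|_{L^2_r(0,R)},\qquad C(\epsilon)=\frac{\sqrt3}{R^{3/2}}+\frac{1}{2R^{2}\epsilon},
\]
which is exactly the claimed estimate.

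I do not anticipate a substantive obstacle; the only two points needing a little care are the justification of the pointwise identity near the singular end $r=0$ (handled by noting that $u$ is only evaluated at $r=R$ and that $\int_0^R s^{2}u(s)^2\,\dd s=\|u\|_{L^2_r(0,R)}^2$ is finite, so no spurious boundary term survives the $s$-integration), and the bookkeeping that the fixed geometric constant $R\in\{R_\mathrm{n},R_\mathrm{p}\}$ is harmlessly absorbed into $\epsilon$ and $C(\epsilon)$. An equally elementary alternative would be to restrict $u$ to $[R/2,R]$, where $r^{2}$ is comparable to a constant, invoke the standard unweighted one-dimensional interpolation trace inequality there, and then pass back to the weighted norms.
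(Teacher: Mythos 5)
Your proof is correct. Note that the paper does not actually prove Proposition~\ref{prop:DFN_radial_surface}: it simply cites the result from the reference attached to it, so there is no in-text argument to compare against. Your derivation---absolute continuity of $u$ and $u^2$ on $[\delta,R]$ for $\delta>0$, the identity $u(R)^2-u(s)^2=\int_s^R 2u\,\partial_r u\,\dd t$, multiplication by $s^2$ and integration in $s$, Fubini--Tonelli, the bound $t^3\le R t^2$, weighted Cauchy--Schwarz, and Young's inequality---is elementary, self-contained, and correctly yields the claimed estimate with an explicit constant $C(\epsilon)=\frac{\sqrt3}{R^{3/2}}+\frac{1}{2R^2\epsilon}$, which is a genuine addition over the bare citation. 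The only two points worth double-checking are exactly the ones you flag: (i) the Fubini swap is legitimate because the nonnegative double integral is dominated by $R\,\|u\|_{L^2_r}\,\|\partial_r u\|_{L^2_r}<\infty$; and (ii) the singularity of the weight at $r=0$ is irrelevant since $u$ is only evaluated at $r=R$ and $\int_0^R s^2 u(s)^2\,\dd s=\|u\|_{L^2_r}^2$ is finite, so no boundary term is lost at $s=0$. No gap.
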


    Let $X$ be a Banach space. Vector-valued Lebesgue spaces $L^p\qty(\Omega;X)$, $1\le p \le \infty$, and Sobolev spaces $H^m\qty(\Omega;X)$, $m\in \mathbb{N}_0$, for a generic domain $\Omega$ are introduced. 
    It is convenient for $c_2$ with $\Omega_2$ and $R_\mathrm{s}$ in Section~\ref{sec:intro} to define 
    \begin{displaymath}
        H^p\qty(\Omega_2 ; H^q_r \qty(0,R_\mathrm{s}(\cdot))):= H^p\qty(\Omega_\mathrm{n} ; H^q_r \qty(0,R_\mathrm{n})) \cap H^p\qty(\Omega_\mathrm{p} ; H^q_r \qty(0,R_\mathrm{p})),\quad p,q \in \mathbb{N}_0,
    \end{displaymath} 
    with the norm $\norm{\cdot}_{p,\Omega_2;q,r} = \norm{\cdot}_{ H^p\qty(\Omega_\mathrm{n} ; H^q_r \qty(0,R_\mathrm{n}))} + \norm{\cdot}_{H^p\qty(\Omega_\mathrm{p} ; H^q_r \qty(0,R_\mathrm{p}))}$.
    Besides, for time-dependent variables with $\Omega=[0,T]$, notations $\qty(H^m\qty(0,T;X), \norm{\cdot}_{m;X})$ are used, and for a partition of the interval $[0, T]$, $0=t_0<t_1<\cdots<t_M=T$, we define $\norm{\cdot}_{0,k;X}:=\qty(\int_{t_{k-1}}^{t_k} \norm{\cdot}^2_X\,\dd t)^\frac{1}{2}$, $k = 1,2,\dots, M$.

    \section{Numerical error and order in 3D} \label{app_sec:DFN_exp_3d}
        In this example, we set $\Omega_\mathrm{n} = \qty[0,50] \times \qty[0,111.8] \times \qty[0,111.8]$, $\Omega_\mathrm{s} = \qty[50,75.4] \times \qty[0,111.8] \times \qty[0,111.8]$, and $\Omega_\mathrm{p} = \qty[75.4,111.8] \times \qty[0,111.8] \times \qty[0,111.8]$ (all dimensions in $10^{-6}\mathrm{m}$). The boundary $\Gamma$ is defined as $\Gamma_\mathrm{n} \cup \Gamma_\mathrm{p}$, where $\Gamma_\mathrm{n} = \qty{0} \times \qty[0,111.8] \times \qty[0,111.8]$ and $\Gamma_\mathrm{p} = \qty{111.8} \times \qty[0,111.8] \times \qty[0,111.8]$. A 5C (30A) discharge rate is applied, using parameters from \cite{smith_solid-state_2006}. 

        The initial spatial mesh is shown by Figure~\ref{fig:DFN_exp_mesh} and the initial radial grid is uniform with the grid size $\Delta r = 1.25\times 10^{-7}\mathrm{m}$. 
        The finite element solution in an extremely fine mesh ($R_h=5$ and $R_{\Delta r} = 5$) with a sufficiently small step size $\tau_\mathrm{ref} = 0.15625 \text{s}$ is taken as the reference solution.

        Fixing $R_h=5$, $R_{\Delta r} = 5$ and refining the initial time step size twice, the convergence error and order of all numerical solutions with respect to $\tau$ at $T = 2.5\ s$ are presented in Table~\ref{tab:DFN_exp_3d_err_t}.
        Fixing $R_{\Delta r} = 5$, $\tau = \tau_\mathrm{ref}$ and refining the initial spatial mesh twice, the convergence error and order with respect to $h$ at  $t_k = k\tau$ are presented in  Table~\ref{tab:DFN_exp_3d_err_h}.  We can clearly see the convergence rates $\order{\tau}$ and $\order{h}$, which agree with our analysis.
            \begin{figure}[!htbp]
                \centering
                \begin{subfigure}[b]{0.49\textwidth}
                    \includegraphics[width=\textwidth]{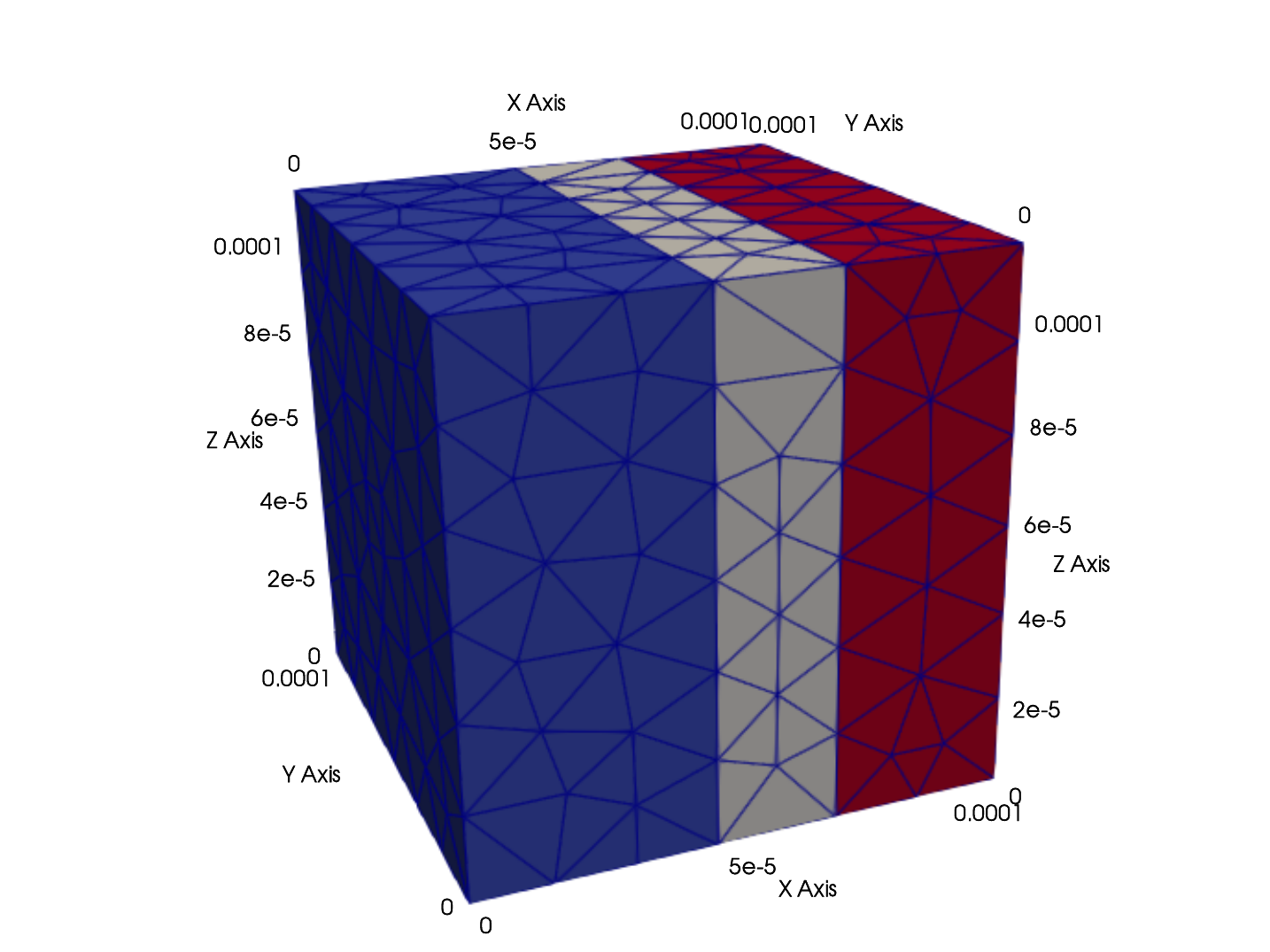}
                    \caption{}
                    \label{fig:DFN_exp_3d_mesh_r0}
                \end{subfigure}%
                ~
                \begin{subfigure}[b]{0.49\textwidth}
                    \includegraphics[width=\textwidth]{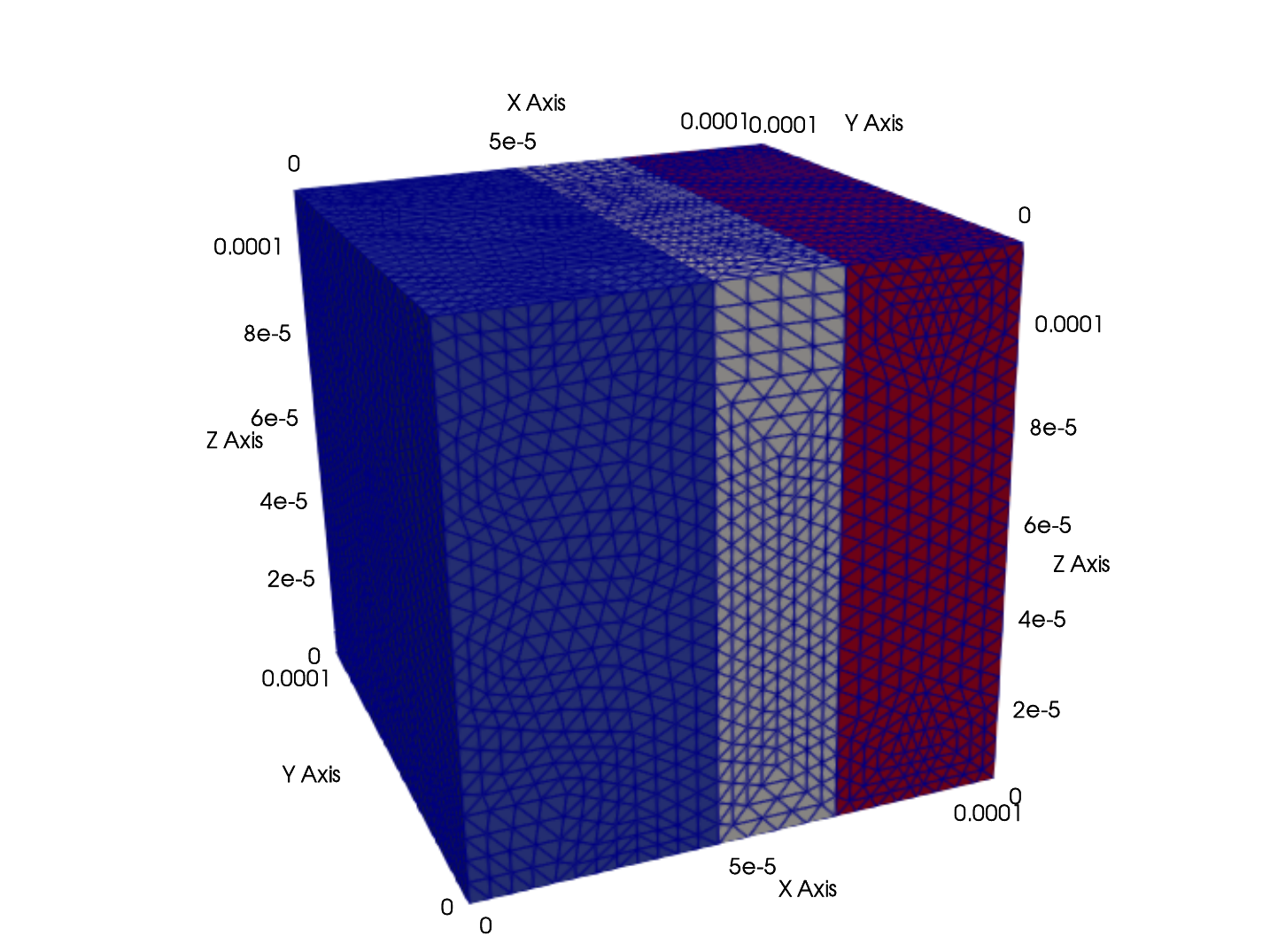}
                    \caption{}
                    \label{fig:DFN_exp_3d_mesh_r2}
                \end{subfigure} 
                \caption{Spatial meshes for convergence verification. (a) Initial coarse mesh ($R_h=0$). (b) Uniformly refined mesh ($R_h=2$).}
                \label{fig:DFN_exp_mesh}
            \end{figure}

            \begin{table}[htbp]
                \centering
                \caption{Error and convergence order for $\tau$.}
                \label{tab:DFN_exp_3d_err_t}
                \centering
                \footnotesize
                \setlength{\tabcolsep}{4pt}
                \renewcommand{\arraystretch}{1.2}
                \begin{subtable}{\textwidth}
                    \centering
                    \begin{tabular}{ccccc}
                        \toprule
                        $t_k = 2.5[s]$ & $R_{\tau}=0$ & $R_{\tau}=1$ & $R_{\tau}=2$& Order \\
                        \midrule
                        $\norm{c_1(\cdot,t_k) - c_{1h}^k}_{H^1}$ & 1.08E-01 & 5.17E-02 & 2.32E-02 & 1.16 \\
                        $\norm{\bar c_2(\cdot,t_k) - \bar c_{2h}^k}_{L^2}$ & 3.04E-01 & 1.40E-01 & 6.07E-02 & 1.21 \\
                        $\norm{\phi_1(\cdot,t_k) - \phi_{1h}^k}_{H^1}$ & 3.28E-06 & 1.44E-06 & 6.46E-07 & 1.15 \\
                        $\norm{\phi_2(\cdot,t_k) - \phi_{2h}^k}_{H^1}$ & 3.22E-08 & 2.92E-09 & 1.32E-09 & 1.15 \\
                        \bottomrule
                    \end{tabular}                    
                \end{subtable}
            \end{table}

            \begin{table}[!htbp]
                \centering
                \caption{Error and convergence order for $h$.}
                \label{tab:DFN_exp_3d_err_h}
                \centering
                \footnotesize
                \setlength{\tabcolsep}{4pt}
                \renewcommand{\arraystretch}{1.2}
                \begin{subtable}{0.49\textwidth}
                    \centering
                    \subcaption{$\norm{\phi_1(\cdot,t_k) - \phi_{1h}^k}_{H^1\qty(\Omega)}$}
                    \label{tab:DFN_exp_err_phi1_H1}
                    \begin{tabular}{ccccc}
                        \toprule
                        $k$ & $R_h=0$ & $R_h=1$ & $R_h=2$ & {Order} \\
                        \midrule
                        1   & 4.48E-06 & 2.51E-06 & 1.29E-06 & 0.96 \\
                        16  & 9.48E-06 & 4.85E-06 & 2.39E-06 & 1.02 \\
                        32  & 1.20E-05 & 6.06E-06 & 2.98E-06 & 1.02 \\
                        64  & 1.50E-05 & 7.57E-06 & 3.72E-06 & 1.02 \\
                        \bottomrule
                    \end{tabular}
                \end{subtable}
                \begin{subtable}{0.49\textwidth}
                    \centering
                    \subcaption{$\norm{\phi_2(\cdot,t_k) - \phi_{2h}^k}_{H^1\qty(\Omega_2)}$}
                    \label{tab:DFN_exp_err_phi2_H1}
                    \begin{tabular}{ccccc}
                        \toprule
                        $k$ & $R_h=0$ & $R_h=1$ & $R_h=2$ & {Order} \\
                        \midrule
                        1   & 7.40E-07 & 3.75E-07 & 1.85E-07 & 1.02 \\
                        16  & 7.40E-07 & 3.75E-07 & 1.85E-07 & 1.02 \\
                        32  & 7.40E-07 & 3.75E-07 & 1.85E-07 & 1.02 \\
                        64  & 7.40E-07 & 3.75E-07 & 1.85E-07 & 1.02 \\
                        \bottomrule
                    \end{tabular}
                \end{subtable}
                \begin{subtable}{0.49\textwidth}
                    \centering
                    \subcaption{$\norm{c_1(\cdot,t_k) - c_{1h}^k}_{H^1\qty(\Omega)}$}
                    \label{tab:DFN_exp_err_c1_H1}
                    \begin{tabular}{ccccc}
                        \toprule
                        $k$ & $R_h=0$ & $R_h=1$ & $R_h=2$ & {Order} \\
                        \midrule
                        1   & 7.29E-02 & 4.94E-02 & 2.70E-02 & 0.87 \\
                        16  & 2.46E-01 & 1.27E-01 & 6.25E-02 & 1.02 \\
                        32  & 3.26E-01 & 1.66E-01 & 8.16E-02 & 1.02 \\
                        64  & 4.26E-01 & 2.16E-01 & 1.06E-01 & 1.02 \\
                        \bottomrule
                    \end{tabular}
                \end{subtable}
                \begin{subtable}{0.49\textwidth}
                    \centering
                    \subcaption{$\norm{\bar c_2(\cdot,t_k) - \bar c_{2h}^k}_{L^2\qty(\Omega_2)}$}
                    \label{tab:DFN_exp_err_c2_surf}
                    \begin{tabular}{ccccc} 
                        \toprule
                        $k$ & $R_h=0$ & $R_h=1$ & $R_h=2$ & {Order} \\
                        \midrule
                        1 & 5.29E-01 & 2.93E-01 & 1.49E-01 & 0.97 \\
                        16 & 1.32E+00 & 6.76E-01 & 3.33E-01 & 1.02 \\
                        32 & 1.51E+00 & 7.66E-01 & 3.77E-01 & 1.02 \\
                        64 & 1.56E+00 & 7.90E-01 & 3.88E-01 & 1.02 \\
                        \bottomrule
                    \end{tabular}
                \end{subtable}
            \end{table}

        \section{Decomposition solvers in the literature}\label{app_sec:solvers} 
            Here, we provide a comprehensive list of all decomposition solvers available in the literature, to the best of our knowledge, whose performance is compared in Subsection~\ref{subsec:exp_solver}.

            Let $\left(\phi_{1 h}^{k-1}, \phi_{2 h}^{k-1}, c_{1h}^{k-1}, c_{2 h\Delta r}^{k-1}\right) $ be the fully discrete solution of the $\qty(k-1)$-th time step. We focus on solving for the solution at the $k$-th time step, $\left(\phi_{1 h}^{k}, \phi_{2 h}^{k}, c_{1h}^{k}, c_{2 h\Delta r}^{k}\right) $.
            All solvers considered in this study follow the same initialization and convergence check procedures as described below:

            (Initialization): Set the initial guess $X^0:=\qty(c_{1 h}^{k,0}, \phi_{1 h}^{k,0}, \phi_{2 h}^{k,0}, c_{2 h\Delta r}^{k,0})$, typically as
            $$\phi_{1 h}^{k,0} = \phi_{1 h}^{k-1},\, \phi_{2 h}^{k,0} = \phi_{2 h}^{k-1},\, c_{1h}^{k,0} = c_{1h}^{k-1},\, c_{2 h\Delta r}^{k,0}=c_{2 h\Delta r}^{k-1}.$$ 
            Define the auxiliary quantities
            $$\bar c^{k,0}_{2 h}=c_{2 h\Delta r}^{k,0}\qty(\cdot, R_\mathrm{s}\qty(\cdot)),\; U_h^{k,0} = \sum_{m\in \set{\text{n,p}}} U_m(\bar c_{2h}^{k,0}) \boldsymbol{1}_{\Omega_m},\; \eta_{h}^{k,0} = \phi_{2h}^{k,0} - \phi_{1h}^{k,0}- U_h^{k,0}.$$
            Let $rtol$ be the relative tolerance and set the iteration index $n=1$. 

            (Convergence Check): $X^n:=\qty(c_{1 h}^{k,n}, \phi_{1 h}^{k,n}, \phi_{2 h}^{k,n}, c_{2 h\Delta r}^{k,n})$. If $\norm{\frac{X^{n} - X^{n-1}}{{X^{n-1}}}}_{l^\infty} < rtol $, then the iteration is terminated, and we set $\phi_{1 h}^{k} = \phi_{1 h}^{k,n}$, $\phi_{2 h}^{k} = \phi_{2 h}^{k,n}$, $c_{1h}^{k} = c_{1h}^{k,n}$, $c_{2 h\Delta r}^{k}=c_{2 h\Delta r}^{k,n}$. Otherwise, set $n=n+1$, update the auxiliary quantities, and continue the iteration.

            \subsection{Macroscale-coupled solver \cite{latz_multiscale_2015}}
            
            \textbf{Step 1 (Subproblem $\qty(c_1,\phi_1,\phi_2)$)}: Find  $c_{1 h}^{k,n}\in V_h^{(1)}\left(\bar \Omega\right) $, $\phi_{1 h}^{k,n} \in  W_h\left(\bar \Omega\right)$ and $\phi_{2 h}^{k,n}\in V_h^{(1)}\left(\bar \Omega_2\right)$, such that  for all $v_{1h} \in  V_h^{(1)}\left(\bar \Omega\right) $, $w_h \in  W_h\left(\bar \Omega\right) $ and $ v_{2h} \in V_h^{(1)}\left(\bar \Omega_2\right) $, 
            \begin{multline}\label{app_eq:solvers_scale_c1}
                \int_{ \Omega} \varepsilon_1\frac{c_{1 h}^{k,n}-c_{1 h}^{k-1}}{\tau} v_{1h}  \, \dd x +\int_{\Omega} k_1 \nabla c_{1 h}^{k,n} \cdot \nabla v_{1h} \, \dd x \\ -\int_{\Omega_2} \qty(\sum_{m\in \set{\text{n,p}}}a_1  J_{m}\qty(c_{1h}^{k,n},\bar c_{2h}^{k,n-1}, \tilde \eta_{h}^{k,n})\boldsymbol{1}_{\Omega_m}) v_{1h} \, \dd x=0,
            \end{multline}
            \begin{multline}\label{app_eq:solvers_scale_phi1}
                \int_{\Omega} \kappa_{1h}^{k,n}\nabla \phi_{1h}^{k,n} \cdot \nabla w_h\, \dd x - \int_{\Omega} \kappa_{2h}^{k,n}\nabla f\qty(c_{1h}^{k,n}) \cdot \nabla w_h\,\dd x \\ - \int_{\Omega_2}  \qty(\sum_{m\in \set{\text{n,p}}}a_2  J_{m}\qty(c_{1h}^{k,n},\bar c_{2h}^{k,n-1},\tilde \eta_{h}^{k,n})\boldsymbol{1}_{\Omega_m})  w_h \,\dd x =0,
            \end{multline} 
            \begin{multline}\label{app_eq:solvers_scale_phi2} 
                \int_{\Omega_2} \sigma \nabla \phi_{2 h}^{k,n} \cdot \nabla v_{2h}\,\dd x + \int_{\Omega_2}\qty(\sum_{m\in \set{\text{n,p}}}a_2  J_{m}\qty(c_{1h}^{k,n},\bar c_{2h}^{k,n-1},\tilde \eta_{h}^{k,n})\boldsymbol{1}_{\Omega_m}) v_{2h}\, \dd x \\ 
                + \int_\Gamma I^k v_{2h} \,\dd x  =0, 
            \end{multline} 
            where $\kappa_{ih}^{k,n} = \sum_{m\in \set{\text{n,s,p}}}\kappa_{im}\qty(c_{1h}^{k,n})\boldsymbol{1}_{\Omega_m}$, $i=1,2$ and $\tilde \eta_{h}^{k,n} = \phi_{2h}^{k,n} - \phi_{1h}^{k,n} - U_h^{k,m-1}$.

            \textbf{Step 2  (Subproblem $c_2$)}: Find $c_{2 h \Delta r}^{k,n}\in V_{h \Delta r}\left(\bar \Omega_ {2 r}\right)$, such that for all  $v_{h \Delta r} \in V_{h \Delta r}\left(\bar \Omega_ {2 r}\right)$,
            \begin{multline}\label{app_eq:solvers_scale_c2}
                \int_{\Omega_2} \int_0^{R_\mathrm{s}(x)} \frac{c_{2 h \Delta r}^{k,n} - c_{2 h \Delta r}^{k-1}}{\tau} v_{h \Delta r} r^ 2 \dd r \dd x+\int_{\Omega_2} \int_0^{R_\mathrm{s}(x)} k_2 \frac{\partial c_{2 h \Delta r}^{k,n}}{\partial r} \frac{\partial v_{h \Delta r}}{\partial r} r^2 \dd r \dd x \\ +\int_{\Omega_2} \qty(\sum_{m\in \set{\text{n,p}}} \frac{R_\mathrm{s}^2}{F}  J_{m}\qty(c_{1h}^{k,n},\bar c_{2h}^{k,n},\eta_{h}^{k,n})\boldsymbol{1}_{\Omega_m}) v_{h \Delta r}\left(x, R_\mathrm{s}( x)\right) \dd x = 0,
            \end{multline} 
            where $\eta_{h}^{k,n} = \phi_{2h}^{k,n} - \phi_{1h}^{k,n} - U_h^{k,n}$, and $U_h^{k,n} = \sum_{m\in \set{\text{n,p}}} U_m(\bar c_{2h}^{k,n}) \boldsymbol{1}_{\Omega_m}$.

            \subsection{Potential-coupled solver \cite{bermejo_implicit-explicit_2019}}
            
                \textbf{Step 1 (Subproblem $c_1$)}: Find $c_{1 h}^{k,n}\in V_h^{(1)}\left(\bar \Omega\right) $, such that for all $v_h \in  V_h^{(1)}\left(\bar \Omega\right) $,
                \begin{multline}\label{app_eq:solvers_potential_c1}
                    \int_{ \Omega}\varepsilon_1\frac{c_{1 h}^{k,n}-c_{1 h}^{k-1}}{\tau} v_h  \, \dd x +\int_{\Omega} k_1 \nabla c_{1 h}^{k,n} \cdot \nabla v_h \, \dd x \\ -\int_{\Omega_2} \qty(\sum_{m\in \set{\text{n,p}}}a_1  J_{m}\qty(c_{1h}^{k,n},\bar c_{2h}^{k,n-1},\eta_{h}^{k,n-1})\boldsymbol{1}_{\Omega_m}) v_h \, \dd x=0. 
                \end{multline}
                    
                \textbf{Step 2 (Subproblem $\qty(\phi_1,\phi_2)$)}: Find $\phi_{1 h}^{k,n} \in  W_h\left(\bar \Omega\right)$, $\phi_{2 h}^{k,n}\in V_h^{(1)}\left(\bar \Omega_2\right)$, such that  for all $w_h \in  W_h\left(\bar \Omega\right) $, $ v_h \in V_h^{(1)}\left(\bar \Omega_2\right) $,
                \begin{multline}\label{app_eq:solvers_potential_phi1}
                    \int_{\Omega} \kappa_{1h}^{k,n}\nabla \phi_{1h}^{k,n} \cdot \nabla w_h\, \dd x - \int_{\Omega} \kappa_{2h}^{k,n}\nabla f\qty(c_{1h}^{k,n}) \cdot \nabla w_h\,\dd x \\ - \int_{\Omega_2}  \qty(\sum_{m\in \set{\text{n,p}}}a_2  J_{m}\qty(c_{1h}^{k,n},\bar c_{2h}^{k,n-1},\tilde \eta_{h}^{k,n})\boldsymbol{1}_{\Omega_m})  w_h \,\dd x =0,
                \end{multline} 
                \begin{multline}\label{app_eq:solvers_potential_phi2}
                    \int_{\Omega_2} \sigma \nabla \phi_{2 h}^{k,n} \cdot \nabla v_{h}\,\dd x + \int_{\Omega_2}\qty(\sum_{m\in \set{\text{n,p}}}a_2  J_{m}\qty(c_{1h}^{k,n},\bar c_{2h}^{k,n-1},\tilde \eta_{h}^{k,n})\boldsymbol{1}_{\Omega_m})  v_h\, \dd x \\ 
                    + \int_\Gamma I^k v_{h} \,\dd x  =0, 
                \end{multline} 
                where $\kappa_{ih}^{k,n} = \sum_{m\in \set{\text{n,s,p}}}\kappa_{im}\qty(c_{1h}^{k,n})\boldsymbol{1}_{\Omega_m}$, $i=1,2$, $\tilde \eta_{h}^{k,n} = \phi_{2h}^{k,n} - \phi_{1h}^{k,n} - U_h^{k,n-1}$.
    
                \textbf{Step 3  (Subproblem $c_2$)}: Find $c_{2 h \Delta r}^{k,n}\in V_{h \Delta r}\left(\bar \Omega_ {2 r}\right)$, such that for all  $v_{h \Delta r} \in V_{h \Delta r}\left(\bar \Omega_ {2 r}\right)$,
                \begin{multline}\label{app_eq:solvers_potential_c2}
                    \int_{\Omega_2} \int_0^{R_\mathrm{s}(x)} \frac{c_{2 h \Delta r}^{k,n} - c_{2 h \Delta r}^{k-1}}{\tau} v_{h \Delta r} r^ 2 \dd r \dd x+\int_{\Omega_2} \int_0^{R_\mathrm{s}(x)} k_2 \frac{\partial c_{2 h \Delta r}^{k,n}}{\partial r} \frac{\partial v_{h \Delta r}}{\partial r} r^2 \dd r \dd x \\ +\int_{\Omega_2} \qty(\sum_{m\in \set{\text{n,p}}} \frac{R_\mathrm{s}^2}{F}  J_{m}\qty(c_{1h}^{k,n},\bar c_{2h}^{k,n},\eta_{h}^{k,n})\boldsymbol{1}_{\Omega_m}) v_{h \Delta r}\left(x, R_\mathrm{s}( x)\right) \dd x = 0,
                \end{multline} 
                where $\eta_{h}^{k,n} = \phi_{2h}^{k,n} - \phi_{1h}^{k,n} - U_h^{k,n}$, and $U_h^{k,n} = \sum_{m\in \set{\text{n,p}}} U_m(\bar c_{2h}^{k,n}) \boldsymbol{1}_{\Omega_m}$.

            \subsection{Fully decoupled solver \cite{kim_robust_2023}}

            \textbf{Step 1  (Subproblem $c_2$)}: Find $c_{2 h \Delta r}^{k,n}\in V_{h \Delta r}\left(\bar \Omega_ {2 r}\right)$, such that for all  $v_{h \Delta r} \in V_{h \Delta r}\left(\bar \Omega_ {2 r}\right)$,
            \begin{multline*}
                \int_{\Omega_2} \int_0^{R_\mathrm{s}(x)} \frac{c_{2 h \Delta r}^{k,n} - c_{2 h \Delta r}^{k-1}}{\tau} v_{h \Delta r} r^ 2 \dd r \dd x+\int_{\Omega_2} \int_0^{R_\mathrm{s}(x)} k_2 \frac{\partial c_{2 h \Delta r}^{k,n}}{\partial r} \frac{\partial v_{h \Delta r}}{\partial r} r^2 \dd r \dd x \\ +\int_{\Omega_2} \qty(\sum_{m\in \set{\text{n,p}}} \frac{R_\mathrm{s}^2}{F}  J_{m}\qty(c_{1h}^{k,n-1},\bar c_{2h}^{k,n}, \tilde \eta_{h}^{k,n})\boldsymbol{1}_{\Omega_m}) v_{h \Delta r}\left(x, R_\mathrm{s}( x)\right) \dd x = 0,
            \end{multline*} 
            where $\tilde \eta_{h}^{k,n} = \phi_{2h}^{k,n-1} - \phi_{1h}^{k,n-1} - U_h^{k,n}$, and $U_h^{k,n} = \sum_{m\in \set{\text{n,p}}} U_m(\bar c_{2h}^{k,n}) \boldsymbol{1}_{\Omega_m}$ .

            \textbf{Step 2 (Subproblem $\phi_2$)}: Find $\phi_{2 h}^{k,n}\in V_h^{(1)}\left(\bar \Omega_2\right)$, such that  for all $ v_h \in V_h^{(1)}\left(\bar \Omega_2\right) $,
                \begin{multline*}
                    \int_{\Omega_2} \sigma \nabla \phi_{2 h}^{k,n} \cdot \nabla v_{h}\,\dd x + \int_{\Omega_2}\qty(\sum_{m\in \set{\text{n,p}}}a_2  J_{m}\qty(c_{1h}^{k,n},\bar c_{2h}^{k,n-1},\tilde{\tilde{ \eta}}_{h}^{k,n})\boldsymbol{1}_{\Omega_m})  v_h\, \dd x \\ 
                    + \int_\Gamma I^k v_{h} \,\dd x  =0, 
                \end{multline*} 
                where $\tilde{\tilde{ \eta}}_{h}^{k,n} = \phi_{2h}^{k,n} - \phi_{1h}^{k,n-1} - U_h^{k,n}$. 

            \textbf{Step 3 (Subproblem $\phi_1$)}: Find $\phi_{1 h}^{k,n} \in  V_h^{(1)}\left(\bar \Omega\right)$, such that  for all $w_h \in  V_h^{(1)}\left(\bar \Omega\right)$,
            \begin{multline}\label{app_eq:solvers_decouple_phi1}
                \int_{\Omega} \kappa_{1h}^{k,n-1}\nabla \phi_{1h}^{k,n} \cdot \nabla w_h\, \dd x - \int_{\Omega} \kappa_{2h}^{k,n-1}\nabla f\qty(c_{1h}^{k,n-1}) \cdot \nabla w_h\,\dd x \\ - \int_{\Omega_2}  \qty(\sum_{m\in \set{\text{n,p}}}a_2  J_{m}\qty(c_{1h}^{k,n-1},\bar c_{2h}^{k,n},{{\eta}}_{h}^{k,n})\boldsymbol{1}_{\Omega_m})  w_h \,\dd x =0,
            \end{multline} 
            where ${{\eta}}_{h}^{k,n} = \phi_{2h}^{k,n} - \phi_{1h}^{k,n} - U_h^{k,n}$.

            \textbf{Step 4 (Subproblem $c_1$)}: Find $c_{1 h}^{k,n}\in V_h^{(1)}\left(\bar \Omega\right) $, such that for all $v_h \in  V_h^{(1)}\left(\bar \Omega\right) $,
                \begin{multline*}\label{app_eq:solvers_decouple_c1}
                    \int_{ \Omega} \varepsilon_1\frac{c_{1 h}^{k,n}-c_{1 h}^{k-1}}{\tau} v_h  \, \dd x +\int_{\Omega} k_1 \nabla c_{1 h}^{k,n} \cdot \nabla v_h \, \dd x \\ -\int_{\Omega_2} \qty(\sum_{m\in \set{\text{n,p}}}a_1  J_{m}\qty(c_{1h}^{k,n},\bar c_{2h}^{k,n},\eta_{h}^{k,n})\boldsymbol{1}_{\Omega_m}) v_h \, \dd x=0.
                \end{multline*}  

 \bibliographystyle{elsarticle-num-names} 
 \bibliography{bib/PDE.bib, bib/FEM.bib, bib/review.bib, bib/DFN.bib, bib/DFN_heat.bib, bib/DFN_mechanics.bib,bib/solver.bib}
 


\end{document}